\newcommand{\bburl}[1]{\textcolor{blue}{\url{#1}}}
\newcommand{\Max}{\text{\rm MAX}}
\newtheorem{thm}{Theorem}[section]
\newtheorem{lem}[thm]{Lemma}
\newtheorem{prop}[thm]{Proposition}
\newtheorem{exa}[thm]{Example}
\newtheorem{que}[thm]{Question}
\newtheorem{defi}[thm]{Definition}
\newtheorem{rek}[thm]{Remark}
\DeclareFixedFont{\ttb}{T1}{txtt}{bx}{n}{12} 
\DeclareFixedFont{\ttm}{T1}{txtt}{m}{n}{12}
\definecolor{deepblue}{rgb}{0,0,0.5}
\definecolor{deepred}{rgb}{0.6,0,0}
\definecolor{deepgreen}{rgb}{0,0.5,0}
\newcommand\pythonstyle{\lstset{
language=Python,
basicstyle=\ttm,
morekeywords={self},              
keywordstyle=\ttb\color{deepblue},
emph={MyClass,__init__},         
emphstyle=\ttb\color{deepred},    
stringstyle=\color{deepgreen},
frame=tb,                         
showstringspaces=false
}}
\newcommand\pythoninline[1]{{\pythonstyle\lstinline!#1!}}
\DeclareMathOperator{\supp}{supp}
\numberwithin{equation}{section}
\DeclareFontFamily{U}{mathx}{}
\DeclareFontShape{U}{mathx}{m}{n}{<-> mathx10}{}
\DeclareSymbolFont{mathx}{U}{mathx}{m}{n}
\DeclareMathAccent{\widehat}{0}{mathx}{"70}
\DeclareMathAccent{\widecheck}{0}{mathx}{"71}
\def\Ddots{\mathinner{\mkern1mu\raise\p@
\vbox{\kern7\p@\hbox{.}}\mkern2mu
\raise4\p@\hbox{.}\mkern2mu\raise7\p@\hbox{.}\mkern1mu}}
\begin{document}

\title{Lower-Order Refinements of Greedy Approximation}

\author{Kevin Beanland}
\address{Department of Mathematics, Washington and Lee University, VA 24450, USA}
\email{beanlandk@wlu.edu}

\author{H\`ung Vi\d{\^e}t Chu}
\address{Department of Mathematics, Washington and Lee University, VA 24450, USA}
\email{hchu@wlu.edu}

\author{Thomas Schlumprecht}
\address{Department of Mathematics, Texas A\&M University, TX 77843, USA and Faculty of Electrical Engineering, Czech Technical University in Prague, Zikova 4, 16627, Prague, Czech Republic}
\email{t-schlumprecht@tamu.edu}

\author{Andr\'{a}s Zs\'{a}k}
\address{Peterhouse, Cambridge CB2 1RD and Department of Pure Mathematics and Mathematical Statistics,
Centre for Mathematical Sciences, University of Cambridge, Wilberforce Road, Cambridge CB3 0WB,
United Kingdom}
\email{a.zsak@dpmms.cam.ac.uk}

\subjclass[2020]{41A65; 46B15}

\keywords{Thresholding Greedy Algorithm, Schreier unconditional, Schreier families}

\thanks{}

\maketitle

\begin{abstract}
For two countable ordinals $\alpha$ and $\beta$, a basis of a Banach space $X$ is said to be $(\alpha, \beta)$-quasi-greedy if it is
\begin{enumerate}
    \item quasi-greedy,
    \item $\mathcal{S}_\alpha$-unconditional but not $\mathcal{S}_{\alpha+1}$-unconditional, and
    \item $\mathcal{S}_\beta$-democratic but not $\mathcal{S}_{\beta+1}$-democratic.
\end{enumerate}
If $\alpha$ or $\beta$ is replaced with $\infty$, then the basis is required to be unconditonal or democratic, respectively. Previous work constructed a $(0,0)$-quasi-greedy basis, an $(\alpha, \infty)$-quasi-greedy basis, and an $(\infty, \alpha)$-quasi-greedy basis. In this paper, we construct $(\alpha, \beta)$-quasi-greedy bases for $\beta\le \alpha+1$ (except the already solved case $\alpha = \beta = 0$).
\end{abstract}

\tableofcontents

\section{Introduction}
Let $X$ be a separable Banach space over the field $\mathbb{F} = \mathbb{R}$ or $\mathbb{C}$ and $X^*$ be its dual. A countable collection $(e_i)_{i=1}^\infty \subset X$ is called a \textit{(semi-normalized) Schauder basis} if $0 < \inf_i \|e_i\|\le \sup_i \|e_i\| <\infty$, and for each $x\in X$, there is a unique sequence of scalars $(a_i)_{i=1}^\infty$ such that 
$x = \sum_{i=1}^\infty a_i e_i$. In fact, if $(e_i^*)_{i=1}^\infty\subset X^*$ is the unique sequence satisfying
$$e_i^*(e_j) \ =\ \begin{cases} 1\mbox{ if }i = j,\\ 0\mbox{ otherwise},\end{cases}$$ 
then $a_i = e_i^*(x)$ for all $i\ge 1$. Thus, $e_i^*(x)$ is also called the $i$\textsuperscript{th} coefficient of $x$. Konyagin and Temlyakov \cite{KT99} studied the greedy approximation method that kept the absolutely largest coefficients of the vector to be approximated. There they defined for a vector $x$ in a Banach space with a basis a greedy set of order $m\in \mathbb{N}$, denoted by $\Lambda(x, m)$, to contain the $m$ largest coefficients (in modulus) of $x$, i.e.,
$$\min_{i\in \Lambda(x, m)} |e_i^*(x)|\ \ge\ \max_{i\notin \Lambda(x,m)}|e_i^*(x)|.$$
An \textit{m\textsuperscript{th} greedy approximation of $x$} is the finite sum
$$\mathcal{G}_m(x)\ :=\ \sum_{i\in \Lambda(x, m)} e_i^*(x)e_i.$$
For general Banach spaces $X$ and vectors $x$, it is not necessary that $\lim_{m\rightarrow\infty}\mathcal{G}_m(x) = x$; when the convergence occurs for all $x$, the corresponding basis is said to be \textit{quasi-greedy}. Equivalently (\cite[Theorem 1]{Woj}), there is $C>0$ so that 
$$\|\mathcal{G}_m(x)\|\le C\|x\|, \mbox{ for all } x\in X \mbox{ and } m\in \mathbb{N}.$$

To measure how well $\mathcal{G}_m(x)$ approximates $x$, Konyagin and Temlyakov compared the error $\|x-\mathcal{G}_m(x)\|$ with the smallest error resulting from an arbitrary $m$-term linear combination. They called a basis \textit{greedy} if there is a constant $C > 0$ such that 
\begin{equation*}
    \|x-\mathcal{G}_m(x)\|\ \le\ C\inf_{|A|\le m, (a_i)_{i\in A}\subset\mathbb{F}}\left\|x-\sum_{i\in A} a_ie_i\right\|,\mbox{ for all } x\in X\mbox{ and } m\in \mathbb{N}.
\end{equation*}
In this case, $\mathcal{G}_m(x)$ is essentially the best $m$-term approximation of $x$ (up to the constant $C$). Greedy bases are characterized by unconditionality and democracy. Here a basis is \textit{unconditional} if there is a constant $C>0$ such that for all scalars $(a_i)_{i=1}^N$ and $(b_i)_{i=1}^N$ with $|a_i|\le |b_i|$, we have
$$\left\|\sum_{i=1}^N a_ie_i\right\|\ \le\ C\left\|\sum_{i=1}^N b_ie_i\right\|.$$
On the other hand, a basis is \textit{democratic} if for some $C > 0$, 
$$\left\|\sum_{i\in A} e_i\right\|\ \le\ C\left\|\sum_{i\in B}e_i\right\|, \mbox{for all finite }A,B \subset \mathbb{N} \mbox{ with }|A|\le |B|.$$
We use $[\mathbb{N}]^{<\infty}$ to denote the collection of finite subsets of $\mathbb{N}$ and use $1_A$ for $\sum_{i\in A}e_i$, given $A\in [\mathbb{N}]^{<\infty}$. Both unconditionality and democracy are strong properties, rendering greedy bases often nonexistent in direct sums of distinct spaces such as $\ell_p\oplus \ell_q$ $(1 \le p < q < \infty)$ and several Besov spaces \cite{DFOS}. 

Dilworth et al.\ \cite{DKKT03} made the first attempt to weaken the greedy condition while ensuring the new notion of bases has a desirable approximation capacity. They defined \textit{almost greedy} bases, for which, there exists $C> 0$ such that 
\begin{equation*}
    \|x-\mathcal{G}_m(x)\|\ \le\ C\inf_{|A|\le m}\left\|x-P_A(x)\right\|, \mbox{ for all } x\in X \mbox{ and } m\in \mathbb{N},
\end{equation*}
where $P_A(x):= \sum_{i\in A}e_i^*(x)e_i$. For almost greedy bases, the $m$-term greedy approximation $\mathcal{G}_m(x)$ is essentially the best projection in approximating $x$. It turned out that a basis is almost greedy if and only if it is quasi-greedy and democratic.

With the same goal of weakening the greedy condition, for each countable ordinal $\alpha$, the first two named authors \cite{BC} introduced and characterized $\mathcal{S}_{\alpha}$-greedy bases, where $\mathcal{S}_\alpha$ is the Schreier family of order $\alpha$. We shall define Schreier families and record their properties in Section \ref{Schreierfam}. There we see that Schreier families $\mathcal{S}_{\alpha}$ form a rich subcollection of $[\mathbb{N}]^{<\infty}$ and are essentially well-ordered by inclusion. 
These properties make the Schreier families an excellent tool for classifying bases into  various levels of approximation capacities. 

\begin{defi}\normalfont
For each countable ordinal $\alpha$, a basis is said to be \textit{$\mathcal{S}_\alpha$-greedy} if there is $C > 0$ such that 
\begin{equation*}
    \|x-\mathcal{G}_m(x)\|\ \le\ C\inf_{\substack{A\in \mathcal{S}_\alpha, |A|\le m,\\ (a_i)_{i\in A}\subset\mathbb{F}}}\left\|x-\sum_{i\in A} a_ie_i\right\|, \mbox{ for all } x\in X\mbox{ and } m\in \mathbb{N}.
\end{equation*}
\end{defi}

To characterize $\mathcal{S}_\alpha$-greedy bases, we need the notion of $\mathcal{S}_\alpha$-unconditional and $\mathcal{S}_\alpha$-democratic bases. A basis is \textit{$\mathcal{S}_\alpha$-unconditional} if for some $C > 0$,
$$\|P_A(x)\|\ \le\ C\|x\|, \mbox{ for all } x\in X \mbox{ and } A\in \mathcal{S}_\alpha.$$
A basis is \textit{$\mathcal{S}_\alpha$-democratic} if for some $C > 0$, 
$$\|1_A\|\ \le\ C\|1_B\|, \mbox{ for all } A\in \mathcal{S}_\alpha\mbox{ and } B\in [\mathbb{N}]^{<\infty}\mbox{ with }|A|\le |B|.$$
Note that while the set $A$ is restricted to $\mathcal{S}_\alpha$, the set $B$ is not. 

\begin{thm}\cite[Theorem 1.5]{BC}\label{pmt1}
For every countable ordinal $\alpha$, a basis is $\mathcal{S}_\alpha$-greedy if and only if it is quasi-greedy, $\mathcal{S}_\alpha$-unconditional, and $\mathcal{S}_\alpha$-democratic. 
\end{thm}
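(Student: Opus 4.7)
The plan is to prove both directions by adapting the classical proofs of Konyagin--Temlyakov and Dilworth--Kalton--Kutzarova--Temlyakov to the Schreier setting. The key combinatorial input is the hereditarity of $\mathcal{S}_\alpha$: if $A \in \mathcal{S}_\alpha$ and $B \subseteq A$, then $B \in \mathcal{S}_\alpha$, so that projections and democracy comparisons on subsets of an $\mathcal{S}_\alpha$-set remain within the hypotheses.

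\emph{Forward direction.} Quasi-greediness follows by applying the $\mathcal{S}_\alpha$-greedy inequality with $A = \emptyset$. For $\mathcal{S}_\alpha$-democracy, given $A \in \mathcal{S}_\alpha$ and $B$ with $|A| \le |B|$, I may assume $A \cap B = \emptyset$ (else replace $A$ with $A \setminus B \in \mathcal{S}_\alpha$), set $x = 1_A + (1+\varepsilon)1_B$, observe $\mathcal{G}_{|B|}(x) = (1+\varepsilon)1_B$, and apply $\mathcal{S}_\alpha$-greedy with the set $A$ and unit coefficients to get $\|1_A\| \le C(1+\varepsilon)\|1_B\|$. For $\mathcal{S}_\alpha$-unconditionality, let $E \in \mathcal{S}_\alpha$ and $x$ be finitely supported; after a small perturbation assume $e_i^*(x) \ne 0$ for $i \in E$. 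Set $\sigma_i = \sgn(e_i^*(x))$, pick $M > 2\max_i|e_i^*(x)|$, and form
\[
 z \ =\ x + M\sum_{i\in E}\sigma_i e_i.
\]
Then $\Lambda(z,|E|) = E$ and $z - \mathcal{G}_{|E|}(z) = x - P_E(x)$, while $z - \sum_{i\in E}M\sigma_i e_i = x$. Applying $\mathcal{S}_\alpha$-greedy to $z$ with set $E$ and coefficients $a_i = M\sigma_i$ yields $\|x - P_E(x)\| \le C\|x\|$, hence $\|P_E(x)\| \le (C+1)\|x\|$.

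\emph{Backward direction.} Given $x$, $m$, and a competitor $y = \sum_{i\in A}a_ie_i$ with $A \in \mathcal{S}_\alpha$ and $|A| \le m$, I first reduce to the case $y = P_A(x)$: since $A \in \mathcal{S}_\alpha$, $\mathcal{S}_\alpha$-unconditionality gives $\|x - P_A(x)\| = \|(I-P_A)(x-y)\| \le (1+C_u)\|x-y\|$, so it suffices to bound $\|x - \mathcal{G}_m(x)\|$ by $C'\|x - P_A(x)\|$. Set $\Lambda = \Lambda(x,m)$ and $t = \min_{i\in \Lambda}|e_i^*(x)|$, and write
\[
 \|x - \mathcal{G}_m(x)\|\ =\ \|P_{\Lambda^c}(x)\|\ \le\ \|x - P_A(x)\|\ +\ \|P_{\Lambda\setminus A}(x)\|\ +\ \|P_{A\setminus\Lambda}(x)\|.
\]
The term $\|P_{\Lambda\setminus A}(x)\|$ equals $\|\mathcal{G}_{|\Lambda\setminus A|}(x - P_A(x))\|$, since among the coefficients of $x - P_A(x)$ the $|\Lambda\setminus A|$ largest are precisely those on $\Lambda\setminus A$ (they have modulus $\ge t$, while those on $(A\cup\Lambda)^c$ have modulus $\le t$); quasi-greediness bounds this by $C_{qg}\|x - P_A(x)\|$. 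For $\|P_{A\setminus\Lambda}(x)\|$, the set $A\setminus\Lambda \in \mathcal{S}_\alpha$ by hereditarity and its coefficients have modulus $\le t$: lattice $\mathcal{S}_\alpha$-unconditionality bounds $\|P_{A\setminus\Lambda}(x)\|$ by a multiple of $t\|1_{A\setminus\Lambda}\|$; $\mathcal{S}_\alpha$-democracy (applied to $A\setminus\Lambda\in \mathcal{S}_\alpha$ against $\Lambda\setminus A$, using $|A\setminus\Lambda| \le |\Lambda\setminus A|$) bounds this by a multiple of $t\|1_{\Lambda\setminus A}\|$; and Wojtaszczyk's truncation principle for quasi-greedy bases, combined with the unconditionality for constant coefficients that it entails, converts $t\|1_{\Lambda\setminus A}\|$ into a multiple of $\|x - P_A(x)\|$.

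The main obstacle is the chain of estimates in the backward direction's treatment of $\|P_{A\setminus\Lambda}(x)\|$, where one must traverse four inequalities (lattice $\mathcal{S}_\alpha$-unconditionality, $\mathcal{S}_\alpha$-democracy, constant-coefficient unconditionality from quasi-greediness, and the truncation principle) while carefully tracking signs and ensuring that $\mathcal{S}_\alpha$-properties are only invoked on sets genuinely in $\mathcal{S}_\alpha$; the symmetric $\Lambda\setminus A$ side is \emph{not} in $\mathcal{S}_\alpha$, so the purely quasi-greedy machinery (which requires no Schreier structure) must bridge the final gap back to $\|x - P_A(x)\|$.
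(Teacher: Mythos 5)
This statement is not proved in the present paper at all: it is quoted from \cite[Theorem 1.5]{BC}, and the proof there is precisely the Schreier-family adaptation of the Konyagin--Temlyakov and Dilworth--Kalton--Kutzarova--Temlyakov arguments that you carry out, so your route matches the source and your argument is correct. Both directions check out: the perturbation $z=x+M\sum_{i\in E}\sigma_ie_i$ with competitor coefficients $M\sigma_i$ gives $\mathcal{S}_\alpha$-unconditionality, the vector $1_A+(1+\varepsilon)1_B$ gives democracy, and in the converse the decomposition $x-\mathcal{G}_m(x)=(x-P_A(x))+P_{A\setminus\Lambda}(x)-P_{\Lambda\setminus A}(x)$ together with $|A\setminus\Lambda|\le|\Lambda\setminus A|$, hereditarity of $\mathcal{S}_\alpha$, and the truncation lemma for quasi-greedy bases closes the loop. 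Two places deserve one more line each: (i) in the democracy step, replacing $A$ by $A\setminus B$ does not by itself yield $\|1_A\|\le C\|1_B\|$; you must add back $\|1_{A\cap B}\|$, which is absorbed using the $\mathcal{S}_\alpha$-unconditionality you have just established, since $A\cap B\subseteq A$ lies in $\mathcal{S}_\alpha$ by hereditarity and $1_{A\cap B}=P_{A\cap B}(1_B)$; (ii) the ``lattice'' form of $\mathcal{S}_\alpha$-unconditionality applied to $A\setminus\Lambda$ is not the stated definition (which only bounds projections) but follows from it: subsets of $A\setminus\Lambda$ remain in $\mathcal{S}_\alpha$, so sign changes on $A\setminus\Lambda$ cost at most $2C_u+1$, and a convexity argument (with the usual extra constant for complex scalars) then bounds $\|\sum_{i\in A\setminus\Lambda}b_ie_i\|$ by a multiple of $t\,\|1_{A\setminus\Lambda}\|$ when $|b_i|\le t$. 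These are standard, easily supplied details rather than genuine gaps.
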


Furthermore, \cite[Corollary 1.9 and Theorem 1.10]{BC} state that given countable ordinals $\alpha < \beta$, an $\mathcal{S}_{\beta}$-greedy basis is $\mathcal{S}_{\alpha}$-greedy, while there is an $\mathcal{S}_\alpha$-greedy basis that is not $\mathcal{S}_{\beta}$-greedy. Hence, different countable ordinals give different levels of being quasi-greedy. Due to Theorem \ref{pmt1}, we can be more specific about these levels by asking the following question, which was raised in the last section of \cite{BC}. 

\begin{que}\normalfont\label{mque} Given any pair of countable ordinals $(\alpha,\beta)$, is there a quasi-greedy basis that is 
\begin{itemize}
    \item $\mathcal{S}_\alpha$-unconditional but not $\mathcal{S}_{\alpha+1}$-unconditional, and 
    \item $\mathcal{S}_\beta$-democratic but not $\mathcal{S}_{\beta+1}$-democratic?
\end{itemize}
We call such a basis \textit{$(\alpha, \beta)$-quasi-greedy}. 
\end{que}

In \cite{BC}, the authors constructed 
\begin{itemize}
    \item a $(0,0)$-quasi-greedy basis,
    \item and for each $\alpha < \omega_1$, an $(\infty, \alpha)$-quasi-greedy basis, meaning an unconditional basis that is $\mathcal{S}_\alpha$-democratic but not $\mathcal{S}_{\alpha+1}$-democratic, and
    \item an $(\alpha, \infty)$-quasi-greedy basis, meaning a democratic and quasi-greedy basis that is $\mathcal{S}_\alpha$-unconditional but not $\mathcal{S}_{\alpha+1}$-unconditional.
\end{itemize}
These bases correspond to the filled-in circles in Figure \ref{qgtable}.

\begin{figure}[h]\label{qgtable}

\begin{center}
    \begin{tikzpicture}[scale=0.8]
       
       \draw[->] (0,-.5) -- (10
       ,-.5) node[right] {unconditional};
       \draw[->] (-.5,0) -- (-.5,10) node[right,  rotate=90] {democratic};

        \foreach \x in {0,1,2} 
            \node at (\x,-1)  {\x};
    
        \node at (4,-1) {$\omega$};
        \node at (5,-1) {$\omega$+1};
        \node at (7,-1) {$\omega^2$};
        \node at (10,-1) {$\infty$};
          \node at (3,-1) {$\cdots$};
        \node at (6,-1) {$\cdots$};
        \node at (8,-1) {$\cdots$};
        
        \foreach \x in {0,1,2} 
            \node at (-1,\x)  {\x};
            
        \node at (-1,4) {$\omega$};
        \node at (-1,5) {$\omega$+1};
        \node at (-1,7) {$\omega^2$};
         \node at (-1,8) {$\omega^2$+1};
        \node at (-1,10) {$\infty$};

                \node at (-1, 3) {$\vdots$};
        \node at (-1, 6) {$\vdots$};
    \node at (-1, 9) {$\vdots$};

        \foreach \x in {1,2} 
            \node at  (\x,0)  {$\diamond$} ;
        \fill[fill] (0,0) circle (2pt) ;    

        \node at  (5,0)  {$\diamond$} ;
        \node at  (7,0)  {$\diamond$} ;
        \node at (6, 0) {$\cdots$};
         \node at (8, 0) {$\cdots$};
        \fill[fill] (10,0) circle (2pt) ;

        \foreach \x in {0,1,2} 
            \fill[fill] (\x,10) circle (2pt) ;

        \node at (3, 10) {$\cdots$};
        \fill[fill] (4,10) circle (2pt) ;

        \fill[fill] (5,10) circle (2pt) ;
        \node at (6, 10) {$\cdots$};
        \fill[fill] (7,10) circle (2pt) ;
        \node at (8, 10) {$\cdots$};
        \fill[fill] (10,10) circle (2pt) ;

        \node at (0,1) {$\diamond$} ;
        \node at (1,1) {$\diamond$} ;
        \node at (2,1) {$\diamond$} ;

        \node at (3, 1) {$\cdots$};

        \node at (4,1) {$\diamond$} ;

        \node at (5,1) {$\diamond$} ;
        \node at (6, 1) {$\cdots$};

        \node at (7,1) {$\diamond$} ;
        \fill[fill] (10,1) circle (2pt) ;

        \draw (0,2) circle (2pt) ;
        \node at (1,2) {$\diamond$} ;
        \node at (2,2)  {$\diamond$} ;

        \node at (3, 2) {$\cdots$};
   
        \node at (4,2) {$\diamond$} ;
    
        \node at (5,2) {$\diamond$} ;
        \node at (6, 2) {$\cdots$};
      
        \node at (7,2) {$\diamond$} ;
        \fill[fill] (10,1) circle (2pt) ;
        \draw (0,4) circle (2pt) ;
        
        \draw (1,4) circle (2pt) ;
        \draw (2,4) circle (2pt) ;
      
        \node at (3,4) {$\cdots$};
        \node at (4,4)  {$\diamond$} ;

        \node at (5,4) {$\diamond$} ;
        \node at (6, 4) {$\cdots$};
  
        \node at (7,4) {$\diamond$} ;
        \fill[fill] (10,4) circle (2pt) ;

       \draw (0,5) circle (2pt) ;
        \draw (1,5) circle (2pt) ;
        \draw (2,5) circle (2pt) ;
    
        \node at (3, 5) {$\cdots$};
          \node at (4,5)  {$\diamond$} ;
   
         \node at (5,5)  {$\diamond$} ;
        \node at (6, 5) {$\cdots$};
   
        \node at (7,5) {$\diamond$} ;
        \fill[fill] (10,5) circle (2pt) ;

        \draw (0,7) circle (2pt) ;
        \draw (1,7) circle (2pt) ;
        \draw (2,7) circle (2pt) ;

        \node at (3, 7) {$\cdots$};
        \draw (4,7) circle (2pt) ;

        \draw (5,7) circle (2pt) ;
        \node at (6, 7) {$\cdots$};
        \node at  (7,7)  {$\diamond$} ;
        \fill[fill] (10,7) circle (2pt) ;

                       \draw (0,8) circle (2pt) ;
        \draw (1,8) circle (2pt) ;
        \draw (2,8) circle (2pt) ;
   
        \node at (3, 8) {$\cdots$};
          \draw(4,8) circle (2pt) ;
    
         \draw(5,8) circle (2pt) ;
        \node at (6, 8) {$\cdots$};
        \node at  (7,8)  {$\diamond$} ;
        \fill[fill] (10,8) circle (2pt) ;

        \node at (3, 7) {$\cdots$};

        \node at (0, 3) {$\vdots$};

        \node at (0,6) {$\vdots$};

        \node at (3, 0) {$\cdots$};
        \node at (3, 3) {$\Ddots$};

           \node at (6, 6) {$\Ddots$};

          \node at (9, 9) {$\Ddots$};

\end{tikzpicture}
\end{center}
\caption{Higher-order quasi-greedy bases. The horizontal axis indicates the unconditionality level, while the vertical axis indicates the democracy level. The filled-in circles (\scalebox{0.7}{$\bullet$}) correspond to bases that was already constructed in previous work; the empty circles (\scalebox{0.7}{$\circ$}) correspond to bases that are unknown; the diamonds ($\diamond$) are new bases constructed in this present paper.}
\end{figure}
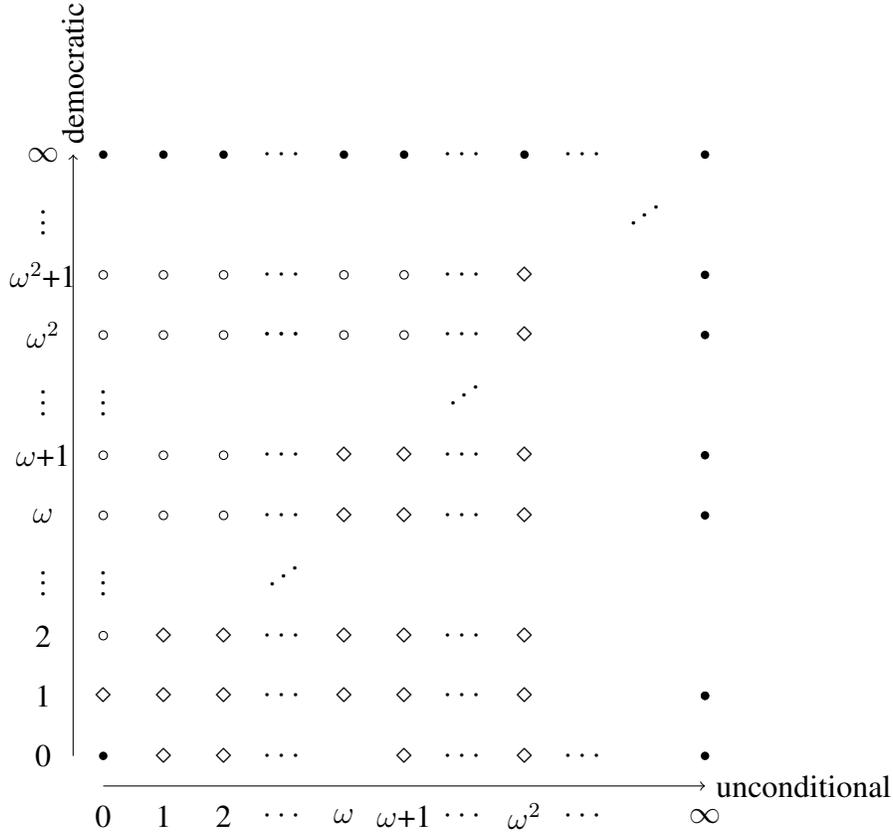

The present paper reports our progress on Question \ref{mque}. For $\beta\le \alpha+1$ and $(\alpha, \beta)\neq (0,0)$, we construct an $(\alpha, \beta)$-quasi-greedy basis. These corresponds to the diamonds in Figure \ref{qgtable}.

All of the Banach spaces we construct are the completion (under a certain norm) of
$c_{00}$, the vector space of finitely supported scalar sequences. The canonical unit vector basis of $c_{00}$, which we denote by $(e_i)_i$, will then always be a normalized Schauder basis of the completion.

\section{The Schreier families and repeated averages}\label{Schreierfam}
Given two sets $A, B\subset \mathbb{N}$ and $m\in \mathbb{N}$, we write $A < B$ to mean $\max A < \min B$ and write $m < A$ or $m\le A$ to mean $m < a$ or $m \le a$, respectively, for all $a\in A$. We also use the convention that $\emptyset < A$ and $A < \emptyset$ for all $A\subset \mathbb{N}$. 

For a countable ordinal $\alpha$, the Schreier family $\mathcal{S}_{\alpha}\subset [\mathbb{N}]^{<\infty}$ is defined recursively as follows \cite{AA}: $$\mathcal{S}_0 \ =\  \{\emptyset\}\cup \bigcup_{n\in \mathbb{N}}\{\{n\}\}.$$
Suppose that $\mathcal{S}_\beta$ has been defined for all $\beta < \alpha$. 

If $\alpha$ is a successor ordinal, i.e., $\alpha = \beta + 1$, then
\begin{equation}\label{er11}\mathcal{S}_\alpha\ =\ \left\{\cup_{i=1}^m E_i\,:\, m \le E_1 < E_2 < \cdots < E_m\mbox{ and }E_i\in \mathcal{S}_\beta, \forall 1\le i\le m\right\}.\end{equation}

If $\alpha$ is a limit ordinal, we choose a sequence of successor ordinals $(\lambda(\alpha, i))_{i=1}^\infty$, which increases to $\alpha$, called an $\alpha$-\textit{approximating sequence}, and put
\begin{equation}\label{er10}
\mathcal{S}_\alpha\ =\ \{E\subset\mathbb{N}\,:\, \exists m\le E, E\in \mathcal{S}_{\lambda(\alpha,m)+1}\}.
\end{equation}
It follows easily and is well known that the families $\mathcal{S}_\alpha$ are \textit{almost increasing} with respect to $\alpha$, meaning that for $0\le \alpha < \beta$, there exists an $N\in \mathbb{N}$ so that
\begin{equation}\label{er12}\{E\in \mathcal{S}_\alpha\,:\, N < E\}\ \subset\ \mathcal{S}_\beta.\end{equation}
It was observed in \cite{TS} that in the recursive definition of $\mathcal{S}_\alpha$, one can choose for a limit ordinal $\alpha$ the $\alpha$-approximating sequence $(\lambda(\alpha, i))$ so that
\begin{equation}\label{er13}\mathcal{S}_{\lambda(\alpha, i)}\ \subset\ \mathcal{S}_{\lambda(\alpha, i+1)}, \mbox{ for }i\in \mathbb{N}.\end{equation}
This choice allows us to rewrite \eqref{er10} as: for each limit ordinal $\alpha$, 
$$\mathcal{S}_\alpha \ =\ \left\{\bigcup_{i=1}^m E_i\,:\, m\le E_1 < E_2 < \cdots < E_m\mbox{ and }E_i\in \mathcal{S}_{\lambda(\alpha, m)}, \forall 1\le i\le m\right\}.$$

From now on, we assume that $\mathcal{S}_\alpha\subset [\mathbb{N}]^{<\infty}$, $\alpha < \omega_1$, is chosen satisfying 
\eqref{er11}, \eqref{er10}, and \eqref{er12}, and that for limit ordinals $\alpha < \omega_1$, the $\alpha$-approximating sequence $(\lambda(\alpha, i))_{i=1}^\infty$ satisfies \eqref{er13}. 

It can be shown by transfinite induction that each Schreier family $\mathcal{S}_\alpha$ is \textit{hereditary} ($F\in \mathcal{S}_\alpha$ and $G\subset F$ imply $G\in \mathcal{S}_\alpha$), \textit{spreading} ($\{m_1, \ldots, m_n\}\in \mathcal{S}_{\alpha}$ and $k_i\ge m_i$, for $i = 1, 2, \ldots, n$, imply $\{k_1, \ldots, k_n\}\in \mathcal{S}_{\alpha}$), and \textit{compact} as a subset of $\{0,1\}^{\mathbb{N}}$ with respect to the product of the discrete topology on $\{0, 1\}$.

Since $\mathcal{S}_\alpha$ is compact, every set in $\mathcal{S}_\alpha$ is contained in some maximal set in $\mathcal{S}_\alpha$. Let $\Max(\mathcal{S}_\alpha)$ be the collection of maximal sets in $\mathcal{S}_\alpha$. In particular, $\Max(\mathcal{S}_\alpha)$ can be described recursively as follows (see \cite[Propositions 2.1 and 2.2]{TS}):

If $\alpha = \beta + 1$, then $A\in \Max(\mathcal{S}_\alpha)$ if and only if there exist $B_1 < B_2 < \cdots < B_{\min A}\in \Max(\mathcal{S}_\beta)$ so that $A = \cup_{i=1}^{\min A} B_i$. Moreover, the sets $B_i\in \Max(\mathcal{S}_\beta)$ are unique. 

If $\alpha$ is a limit ordinal, then $A\in \Max(\mathcal{S}_\alpha)$ if and only if $A\in \Max(\mathcal{S}_{\lambda(\alpha, \min A)+1})$.

\begin{rek}\normalfont
Let us put for a successor ordinal $\alpha = \beta + 1$ and $n\in \mathbb{N}$, $\lambda(\alpha, n) = \beta$. 
Then it follows for any $\alpha < \omega_1$, whether $\alpha$ is a limit or a sucessor ordinal, that for any $A\in \mathcal{S}_\alpha$, there are $A_1 < A_2 < \cdots < A_{\min A}$ in $\mathcal{S}_{\lambda(\alpha, \min A)}$ (possibly some of the $A_i$ could be empty) so that
$$A\ =\ \bigcup_{i=1}^{\min A}A_i.$$
Furthermore, for any $A\in \Max(\mathcal{S}_\alpha)$, there are unique $A_1 < A_2 < \cdots < A_{\min A}$ in $\Max(\mathcal{S}_{\lambda(\alpha, \min A)})$ so that
\begin{equation}\label{er14}A\ =\ \bigcup_{i=1}^{\min A} A_i.\end{equation}
We call \eqref{er14} the recursive representation of $A\in \Max(\mathcal{S}_\alpha)$. 
\end{rek}

We now define the hierarchy of repeated averages which were introduced in \cite{AA}, and record their properties.

For every $\alpha < \omega_1$ and any $A\in \Max(\mathcal{S}_\alpha)$, we will define a vector $$x_{(\alpha, A)}\ =\ \sum_{i=1}^\infty x_{(\alpha, A)}(i)e_i\in c_{00}$$ having nonnegative coefficients. 

If $\alpha = 0$ and $i\in \mathbb{N}$, then 
$$x_{(0, \{i\})}\ =\ e_i.$$
Assume that $x_{(\beta, B)}$ has been defined for all $\beta < \alpha$ and $B\in \Max(\mathcal{S}_\beta)$. Let $A = \bigcup_{i=1}^{\min A} A_i$, with $A_1 < A_2 < \cdots < A_{\min A}$ in $\Max(\mathcal{S}_{\lambda(\alpha, \min A)})$ the (unique) recursive representation of $A\in \Max(\mathcal{S}_\alpha)$. Then we put
$$x_{(\alpha, A)}\ =\ \frac{1}{\min A}\sum_{i=1}^{\min A}x_{(\lambda(\alpha, \min A), A_i)}.$$

Let $M\subset \mathbb{N}$ be infinite and $\alpha  <\omega_1$. Then define the sets $A(\alpha, M, 1) < A(\alpha, M, 2) < A(\alpha, M, 3) < \cdots$ in $\Max(\mathcal{S}_\alpha)$ so that
$$M\ =\ \bigcup_{i=1}^\infty A(\alpha, M, i).$$
For $i\in \mathbb{N}$, we put
$$x_{(\alpha, M, i)}\ =\ x_{(\alpha, A(\alpha, M, i))}.$$

The following properties can be shown by transfinite induction (cf.\ \cite{AMT}) for all $\alpha < \omega_1$:
\begin{itemize}
\item[(P1)] Each $x_{(\alpha, A)}$ is a convex combination of the standard unit vector basis of $c_{00}$, for all $A\in \Max(\mathcal{S}_\alpha)$.
\item[(P2)] The nonzero coefficients of $x_{(\alpha, A)}$ are decreasing.  
\item[(P3)] $\supp (x_{(\alpha, A)}) = A$, for all $A\in \Max(\mathcal{S}_\alpha)$.
\item[(P4)] If $A_1 < A_2 < \cdots$ are in $\Max(\mathcal{S}_\alpha)$, then
$$x_{(\alpha, M, i)}\ =\ x_{(\alpha, A_i)}, \mbox{ for all }i\in \mathbb{N},$$
where $M = \cup_{i=1}^\infty A_i$. 
\end{itemize}

We will later need the following observation.  

\begin{lem}\label{boundby6}
Let $\alpha < \omega_1$ and $N\in \mathbb{N}$. Let $A_1 < A_2 < \cdots < A_N$ be in $\Max(\mathcal{S}_\alpha)$ and $F\in \mathcal{S}_\alpha$. Then 
$$\sum_{j\in F}\sum_{i=1}^N x_{(\alpha, A_i)}(j)\ \le\ 6.$$
\end{lem}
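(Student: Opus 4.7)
The plan is to prove the lemma by transfinite induction on $\alpha < \omega_1$, maintaining the uniform bound of $6$ at every level. The base case $\alpha = 0$ is immediate: each $A_i$ is a singleton with $x_{(0,A_i)} = e_{a_i}$, and $F \in \mathcal{S}_0$ has at most one element, so the double sum is at most $1$.

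For the inductive step, I would exploit the unified recursive representation \eqref{er14}: write $A_i = \bigcup_{k=1}^{m_i} A_{i,k}$ with $m_i = \min A_i$ and $A_{i,k} \in \Max(\mathcal{S}_{\lambda(\alpha, m_i)})$ increasing, so that $x_{(\alpha, A_i)} = \frac{1}{m_i} \sum_k x_{(\lambda(\alpha, m_i), A_{i,k})}$. Setting $m = \min F$, we have $F \in \mathcal{S}_{\lambda(\alpha, m)+1}$ by \eqref{er10}/\eqref{er11}, hence $F = \bigcup_{\ell=1}^{p} F_\ell$ with $p \le F_1 < \cdots < F_p$, each $F_\ell \in \mathcal{S}_{\lambda(\alpha, m)}$, and $p \le m$. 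The target sum then expands as
$$
\sum_{j \in F} \sum_{i=1}^{N} x_{(\alpha, A_i)}(j) \;=\; \sum_{i=1}^{N} \frac{1}{m_i} \sum_{k=1}^{m_i} \sum_{\ell=1}^{p} \sum_{j \in F_\ell} x_{(\lambda(\alpha, m_i), A_{i,k})}(j).
$$
I would split the outer $i$-sum into two groups. For $i$ with $m_i \ge m$, the weight $1/m_i \le 1/m$; for each fixed $\ell$, the inductive hypothesis applied to the $F_\ell \in \mathcal{S}_{\lambda(\alpha, m)}$ against the concatenated sequence of max-$\mathcal{S}_{\lambda(\alpha, m)}$-sets (obtained from the $A_{i,k}$'s after using the nested inclusion \eqref{er13} and the almost-increasing property \eqref{er12} to bring them to the common level $\lambda(\alpha, m)$, discarding at most finitely many low-index sets) bounds the inner sum by $6$; summing over $\ell$ and weighting by $1/m$ gives a contribution of at most $6p/m \le 6$. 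The ordering of the $A_i$'s forces at most one straddling index $i_*$ with $m_{i_*} < m \le \max A_{i_*}$, since $\min A_{i_*+1} > \max A_{i_*} \ge m$ precludes a second such index; for this one index, using that each $x_{(\lambda(\alpha, m_{i_*}), A_{i_*, k})}$ is a probability measure and there are $m_{i_*}$ of them bounds the contribution by $\frac{1}{m_{i_*}} \cdot m_{i_*} = 1$.

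The hard part will be closing the resulting naive bound of $6 + 1 = 7$ down to the claimed $6$; this should be possible by exploiting the slack in Case 1 (the ratio $p/m$ is strictly less than $1$ precisely when a straddling $i_*$ forces $p \le m_{i_*} + 1 < m$, so the Case 1 contribution is actually $\le 6p/m < 6$ whenever Case 2 is nonzero) and amortizing it against the straddling term, or alternatively by treating the two cases jointly with a refined bookkeeping. The other delicate point is the limit case: when $\alpha$ is a limit ordinal the sub-ordinals $\lambda(\alpha, m_i)$ depend on $i$, so the $A_{i,k}$'s live in a priori different Schreier families; here one must invoke \eqref{er12} and \eqref{er13} to reduce all of the $A_{i,k}$ with $m_i \ge m$ to max-sets of the common family $\mathcal{S}_{\lambda(\alpha, m)}$ before the inductive hypothesis can be applied uniformly, and to absorb the finitely many exceptional $A_{i,k}$'s excluded by \eqref{er12} into the single straddling contribution.
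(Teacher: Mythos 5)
Your overall skeleton (transfinite induction, recursive representation, splitting off a single straddling index) points in the right direction, but there are two genuine gaps, and they are exactly where the paper's proof does something different. First, the ``concatenation at a common level'' step does not work. The inductive hypothesis requires $F_\ell$ and the maximal sets to live in the \emph{same} family, but your sets $A_{i,k}$ are maximal in $\mathcal{S}_{\lambda(\alpha,m_i)}$ for \emph{varying} $m_i$, and the inclusions \eqref{er12}, \eqref{er13} only let you promote sets into a \emph{larger} family: since $\mathcal{S}_{\lambda(\alpha,m)}\subset\mathcal{S}_{\lambda(\alpha,m_i)}$ for $m\le m_i$, a maximal $\mathcal{S}_{\lambda(\alpha,m_i)}$-set is in general not even a member of $\mathcal{S}_{\lambda(\alpha,m)}$, let alone maximal in it, and the vectors $x_{(\lambda(\alpha,m_i),A_{i,k})}$ are defined by the recursion at their own level, so they cannot be reinterpreted as level-$\lambda(\alpha,m)$ averages. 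The legitimate move is the opposite one (and it is what the paper does): promote the \emph{set} $F_\ell\in\mathcal{S}_{\lambda(\alpha,\min F)}$ into $\mathcal{S}_{\lambda(\alpha,\min A_i)}$ and apply the inductive hypothesis separately for each fixed $i$, at level $\lambda(\alpha,\min A_i)$, to $F_\ell$ and $A_{i,1}<\cdots<A_{i,\min A_i}$.

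Second, even granting a per-$i$ application of the hypothesis, your bookkeeping yields $6p/m+1\le 7$, and the proposed repair has no basis: the existence of a straddling index $i_*$ does not force $p\le m_{i_*}+1<m$ (the number $p$ of pieces of $F$ depends only on $F$, not on the $A_i$), so there is no slack to amortize against the straddling term. This matters because the induction must propagate the constant $6$ unchanged; a recursion of the form $C\mapsto C+1$ has no fixed point and cannot even produce a uniform constant across all levels. The missing ingredient is the geometric decay of the weights $1/\min A_i$: since $A_1<A_2<\cdots$ are successive maximal sets, $\min A_{i+1}\ge 2\min A_i$, hence $\min A_i\ge 2^{i-2}\min F$ for $i\ge 4$ once $\min F\le\max A_1$. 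The paper bounds the contributions of $A_1,A_2,A_3$ trivially by $1$ each (they are convex combinations, property (P1)), and for $i\ge 4$ uses $1/\min A_i\le 2^{2-i}/\min F$ together with the inductive bound $6$ for each pair $(i,F_j)$, so that summing over the $\min F$ many pieces $F_j$ and over $i$ gives at most $6\sum_{i\ge 4}2^{2-i}=3$; the total is $3+3=6$, i.e.\ $6$ is a fixed point of $C\ge 3+C/2$. Replacing your uniform estimate $1/m_i\le 1/m$ by this decay, and your concatenated application of the hypothesis by the per-$i$ application described above, is what closes the argument.
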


\begin{proof}
For $\alpha = 0$, our claim is trivially true. Assume that for all $\gamma < \alpha$, our claim is correct. Let $A_1 < A_2 < \cdots < A_N$ be in $\Max(\mathcal{S}_\alpha)$. Thus, for $i = 1, 2, \ldots, N$, we write
$$x_{(\alpha, A_i)}\ =\ \frac{1}{\min A_i}\sum_{s = 1}^{\min A_i}x_{(\lambda(\alpha, \min A_i), A_{(i,s)})},$$
where $A_{(i,1)} < A_{(i, 2)} < \cdots < A_{(i, \min A_i)}$ are in $\Max(\mathcal{S}_{\lambda(\alpha, \min A_i)})$ and $A_i = \cup_{s=1}^{\min A_i} A_{(i, s)}$.

Let $F\in \mathcal{S}_\alpha$, which we can assume to be in $\Max(\mathcal{S}_\alpha)$ and write $F$ as
$$F\ =\ \bigcup_{i=1}^{\min F} F_i,\mbox{ where }F_1 < F_2 < \cdots < F_{\min F}\mbox{ are in }\Max(\mathcal{S}_{\lambda(\alpha, \min F)}).$$

Without loss of generality, assume that $\min F\le \max A_1$. Note that for $i = 1, 2, \ldots, N$, we have
$$\min A_{i+1} \ \ge\ 1 + \max A_i \ \ge\ 1 + \min A_i + |A_i| - 1\ \ge\ 2\min A_i.$$
It follows that for all $i\ge 4$, 
$$\min A_i\ \ge\ 2^{i-2}\min A_2 \ >\ 2^{i-2}\min F.$$

We deduce that
\begin{align*}
    \sum_{t\in F}\sum_{i=1}^N x_{(\alpha, A_i)}(t)&\ \le\ 3 + \sum_{j = 1}^{\min F}\sum_{i=4}^N\frac{1}{\min A_i}\sum_{t\in F_j}\sum_{s=1}^{\min A_i}x_{(\lambda(\alpha,\min A_i), A_{(i,s)})}(t)\mbox{ by (P1)}\\
    &\ \le\ 3 + \frac{1}{\min F}\sum_{j=1}^{\min F}\sum_{i=4}^{N}2^{2-i}\sum_{t\in F_j}\sum_{s=1}^{\min A_i}x_{(\lambda(\alpha,\min A_i), A_{(i,s)})}(t).
\end{align*}
For $j = 1, 2, \ldots, \min F$ and $i = 4, 5, \ldots, N$, we have 
$$F_j\ \in\ \mathcal{S}_{\lambda(\alpha, \min F)}\ \subset\ \mathcal{S}_{\lambda(\alpha, \min A_i)}.$$
The inductive hypothesis gives 
$$\sum_{t\in F_j}\sum_{s=1}^{\min A_i}x_{(\lambda(\alpha,\min A_i), A_{(i,s)})}(t)\ \le\ 6.$$
Hence, 
$$\sum_{t\in F}\sum_{i=1}^N x_{(\alpha, A_i)}(t)\ \le\ 3 + \frac{1}{\min F}\sum_{j=1}^{\min F}\sum_{i=2}^{\infty}2^{-i}6\ =\ 6,$$
which finishes the proof. 
\end{proof}

\section{Construction of an $(\alpha, \alpha+1)$-quasi-greedy basis}\label{(alpha, alpha+1)}
In this section, we construct an $(\alpha, \alpha+1)$-quasi-greedy basis.

\subsection{The gauge functions $\boldsymbol{\psi}$ and $\boldsymbol{\phi}$, for general $\boldsymbol{\alpha < \omega_1}$}
Let $\alpha\in [1, \omega_1)$ and $m\in \mathbb{N}$, we define the strictly increasing sequence $(s_{(\alpha, m)}(i))_{i=0}^\infty\subset \mathbb{N}$ by 
$$s_{(\alpha, m)}(0) \ =\ m, \quad A(\alpha, m, i)\ :=\ [s_{(\alpha, m)}(i-1), s_{(\alpha, m)}(i)-1]\in \Max(\mathcal{S}_\alpha), \mbox{ for }i\in \mathbb{N},$$
and  thus $(A(\alpha, m, i))_{i\in \mathbb{N}} = ([s_{(\alpha, m)}(i-1), s_{(\alpha, m)}(i)-1])_{i\in \mathbb{N}}$ is a partition of the set $\{m, m+1, m+2, \ldots\}$. From the construction of $\mathcal{S}_\alpha$, it follows that
\begin{equation}\label{re50}s_{(\alpha+1, m)}(1)\ =\ s_{(\alpha, m)}(m).\end{equation}
Then we define $\theta_{(\alpha, m)}: [m, \infty)\rightarrow \mathbb{R}$, by letting $\theta_{(\alpha, m)}(s_{(\alpha, m)}(i)) = \log m + i$, for $i = 0, 1, 2, \ldots$, and defining $\theta_{(\alpha, m)}(x)$, for other values $x$, by linear interpolation.  

\begin{prop}\label{ppp0}
For $1\le \alpha < \omega_1$ and $m\ge 10^5$,
\begin{equation}\label{e13}\theta_{(\alpha, m)}(x)\ \leq\ \sqrt[4]{x}, \mbox{ for all }x\in [m, \infty).\end{equation}
\end{prop}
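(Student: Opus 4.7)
The strategy has three ingredients: reduce the claim to a bound at the breakpoints by concavity, obtain a uniform lower estimate $s_{(\alpha,m)}(i)\ge 2^i m$ by transfinite induction, and finish with an elementary scalar inequality.

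\emph{Concavity reduction.} By construction, $\theta_{(\alpha,m)}$ is continuous and piecewise linear with corners precisely at the points $s_{(\alpha,m)}(i)$, while $x\mapsto x^{1/4}$ is concave on $[m,\infty)$. If the nodal inequality $\log m + i\le (s_{(\alpha,m)}(i))^{1/4}$ holds for every $i\ge 0$, then on each segment $[s_{(\alpha,m)}(i-1),s_{(\alpha,m)}(i)]$ the linear piece of $\theta_{(\alpha,m)}$ lies below the chord of $x^{1/4}$ joining the two endpoints (both are linear and the chord dominates at the endpoints), and the chord lies below $x^{1/4}$ by concavity. Hence it suffices to prove
\[
\log m + i \ \le\ (s_{(\alpha,m)}(i))^{1/4}\quad\text{for all } i\ge 0.
\]

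\emph{Growth of the breakpoints.} I would establish by transfinite induction on $\alpha\in[1,\omega_1)$ the auxiliary fact that every $A\in\Max(\mathcal{S}_\alpha)$ satisfies $|A|\ge \min A$. For $\alpha=1$ this is immediate from the definition of $\mathcal{S}_1$. For a successor $\alpha=\beta+1$, the recursive representation \eqref{er14} writes $A$ as a union of $\min A$ nonempty maximal sets in $\mathcal{S}_\beta$, so $|A|\ge \min A$ trivially. For limit $\alpha$, the paper's characterization $A\in\Max(\mathcal{S}_\alpha)\iff A\in\Max(\mathcal{S}_{\lambda(\alpha,\min A)+1})$ collapses the limit step to a successor case with ordinal $<\alpha$, and the inductive hypothesis applies. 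Applied to the interval $A(\alpha,m,i+1)$, whose minimum is $s_{(\alpha,m)}(i)$, this yields
\[
s_{(\alpha,m)}(i+1)\ \ge\ 2\, s_{(\alpha,m)}(i),
\]
and iterating from $s_{(\alpha,m)}(0)=m$, one obtains $s_{(\alpha,m)}(i)\ge 2^i m$ for all $i\ge 0$.

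\emph{Numerical verification.} With the lower bound in hand, it remains to check $\log m+i\le 2^{i/4} m^{1/4}$ for $m\ge 10^5$ and $i\ge 0$. The case $i=0$ reduces to $\log m\le m^{1/4}$, a direct one-variable check at $m=10^5$ (where $m^{1/4}\ge 17$). For $i\ge 1$, the elementary estimate $2^{i/4}\ge 1+\tfrac{i\ln 2}{4}$ (from $e^x\ge 1+x$) gives
\[
2^{i/4}m^{1/4}-m^{1/4}\ \ge\ \tfrac{i\ln 2}{4}\,m^{1/4}\ \ge\ i,
\]
the last inequality because $m^{1/4}\ge 17>4/\ln 2$ when $m\ge 10^5$. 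Combining,
\[
\log m + i\ \le\ m^{1/4}+i\ \le\ 2^{i/4}m^{1/4}\ \le\ (s_{(\alpha,m)}(i))^{1/4}.
\]

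\emph{Main obstacle.} The only nontrivial conceptual step is the transfinite induction, and even there the limit case is essentially free thanks to the paper's reduction $\Max(\mathcal{S}_\alpha)=\Max(\mathcal{S}_{\lambda(\alpha,\min A)+1})$; the concavity reduction and the closing scalar inequality are routine.
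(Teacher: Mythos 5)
Your proposal is correct and follows essentially the same route as the paper: reduce to the breakpoints via concavity of $x\mapsto\sqrt[4]{x}$, use the growth estimate $s_{(\alpha,m)}(i)\ge 2^i m$, and finish with an elementary numerical bound valid for $m\ge 10^5$. The only difference is cosmetic: you derive the growth estimate by a transfinite induction showing $|A|\ge\min A$ for $A\in\Max(\mathcal{S}_\alpha)$, where the paper obtains the same fact directly from $\mathcal{S}_1\subset\mathcal{S}_\alpha$, and your endpoint check splits $i=0$ from $i\ge 1$ instead of bounding $i/\sqrt[4]{2^i}$ by $3$.
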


\begin{proof}
For each $i\in \mathbb{N}$, $s_{(\alpha, m)}(i) \geq 2^{i}m$, because $\mathcal{S}_1\subset \mathcal{S}_\alpha$, and thus,
$$\frac{\theta_{(\alpha, m)}(s_{(\alpha, m)}(i))}{\sqrt[4]{s_{(\alpha, m)}(i)}}\ \leq\ \frac{\log m + i}{\sqrt[4]{2^{i}m}}\ =\ \frac{\log m}{\sqrt[4]{2^{i}m}} + \frac{i}{\sqrt[4]{2^{i}m}}\ <\ \frac{\log m}{\sqrt[4]{m}} + \frac{3}{\sqrt[4]{m}}.$$
Therefore, for $m\ge 10^5$, $\theta_{(\alpha, m)}(s_{(\alpha, m)}(i)) \leq \sqrt[4]{s_{(\alpha, m)}(i)}$. 
Then linear interpolation and the concavity of $\sqrt[4]{x}$ guarantee \eqref{e13}. 
\end{proof}

\begin{prop}\label{ppp1}
For $1\le \alpha < \omega_1$ and $m\ge 10^5$,
the function $\theta_{(\alpha, m)}^2(x)/x$ is strictly decreasing on $[m, \infty)$.
\end{prop}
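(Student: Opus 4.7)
The plan is to work piece by piece on the linear segments of $\theta_{(\alpha,m)}$ and verify that the derivative of $\theta_{(\alpha,m)}^2(x)/x$ is strictly negative in the interior of each segment, then stitch the pieces together by continuity.

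\medskip

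First I would set up notation. Write $s_i = s_{(\alpha,m)}(i)$, $a_i = s_i-s_{i-1}$, and observe that on each segment $[s_{i-1},s_i]$ the function is affine,
\[
\theta_{(\alpha,m)}(x) \ =\ (\log m + i - 1) + \frac{x-s_{i-1}}{a_i},
\qquad
\theta_{(\alpha,m)}'(x) \ =\ \frac{1}{a_i}.
\]
Since $\theta_{(\alpha,m)}(x) \ge \log m > 0$, differentiating gives
\[
\left(\frac{\theta_{(\alpha,m)}^2(x)}{x}\right)'
\ =\ \frac{\theta_{(\alpha,m)}(x)\bigl(2x\,\theta_{(\alpha,m)}'(x)-\theta_{(\alpha,m)}(x)\bigr)}{x^2},
\]
so it is enough to show $h(x):=2x\,\theta_{(\alpha,m)}'(x)-\theta_{(\alpha,m)}(x)<0$ on every open interval $(s_{i-1},s_i)$.

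\medskip

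Next, I would use the fact that on $(s_{i-1},s_i)$ the function $h$ is itself affine with slope $2/a_i - 1/a_i = 1/a_i > 0$, hence increasing. Thus it suffices to check that $h(s_i^-) < 0$, i.e.
\[
\frac{2 s_i}{s_i - s_{i-1}} \ <\ \log m + i.
\]
Since $A(\alpha,m,i)=[s_{i-1},s_i-1]\in\Max(\mathcal{S}_\alpha)$ has minimum $s_{i-1}$ and $\mathcal{S}_1\subset\mathcal{S}_\alpha$ forces every maximal $\mathcal{S}_\alpha$-set with minimum $k$ to have size at least $k$, we get $s_i - s_{i-1} = |A(\alpha,m,i)| \ge s_{i-1}$, i.e.\ $s_i\ge 2s_{i-1}$. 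Consequently $2s_i/(s_i-s_{i-1}) \le 4$. For $m\ge 10^5$ we have $\log m > 4$, so the right-hand side $\log m + i$ exceeds $4$ for every $i\ge 0$, which gives the required inequality.

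\medskip

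Finally I would conclude: $(\theta_{(\alpha,m)}^2/x)'<0$ on the interior of each piece $(s_{i-1},s_i)$, so $\theta_{(\alpha,m)}^2(x)/x$ is strictly decreasing on each closed piece $[s_{i-1},s_i]$; since the function is continuous on $[m,\infty)$, the strict decrease extends to the whole half-line. The only non-trivial ingredient is the inequality $s_i\ge 2s_{i-1}$, which I expect to be the one point requiring a short justification; everything else is a one-line calculus argument on each linear segment.
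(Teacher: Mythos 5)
Your proposal is correct and follows essentially the same route as the paper: both reduce to showing $2x\,\theta_{(\alpha,m)}'(x)<\theta_{(\alpha,m)}(x)$ on each linear segment and settle it using the key fact $s_{(\alpha,m)}(i)\ge 2s_{(\alpha,m)}(i-1)$ (maximal $\mathcal{S}_\alpha$-sets have size at least their minimum, since $\mathcal{S}_1\subset\mathcal{S}_\alpha$), together with $\log m$ being large for $m\ge 10^5$. Your observation that $h(x)=2x\theta'(x)-\theta(x)$ is affine and increasing on each segment, so only the right endpoint needs checking, is just a slightly tidier way of doing the paper's parametrized algebraic rearrangement; no gap.
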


\begin{proof}
Let $f(x):= \theta_{(\alpha, m)}^2(x)/x$, for $x\ge m$. Since $f(x)$ is continuous, it suffices to show that for $m\ge 10^5$ and $i\in \mathbb{N}$, $f(x)$ is decreasing on $(s_{(\alpha, m)}(i-1), s_{(\alpha, m)}(i))$.

We have $$f'(x) \ =\ \frac{2\theta_{(\alpha, m)}(x)\theta'_{(\alpha, m)}(x) x-\theta^2_{(\alpha, m)}(x)}{x^2}\ =\ \frac{\theta_{(\alpha, m)}(x)(2x\theta'_{(\alpha, m)}(x)-\theta_{(\alpha, m)}(x))}{x^2}.$$
We need to verify that 
\begin{equation}\label{e14}2x\theta'_{(\alpha, m)}(x)\ <\ \theta_{(\alpha, m)}(x), \mbox{ for all }x\in (s_{(\alpha, m)}(i-1), s_{(\alpha, m)}(i)).\end{equation}
Write $x = (1-t)s_{(\alpha, m)}(i-1) + ts_{(\alpha, m)}(i)$ for some $t\in (0,1)$. Then \eqref{e14} is equivalent to
$$\frac{2((1-t)s_{(\alpha, m)}(i-1) + ts_{(\alpha, m)}(i))}{s_{(\alpha, m)}(i)-s_{(\alpha, m)}(i-1)}\ <\ (1-t)(\log m + i-1) + t(\log m + i).$$
Equivalently, 
$$2s_{(\alpha, m)}(i-1) \ <\ (\log m+i-1-t)(s_{(\alpha, m)}(i)-s_{(\alpha, m)}(i-1)),$$
which is clearly true for $m\geq 10^5$ because $s_{(\alpha, m)}(i)\geq 2s_{(\alpha, m)}(i-1)$.
\end{proof}

Our goal is to define a map $\psi: [0,\infty)\rightarrow \mathbb{R}$  and two strictly increasing subsequences $M_1$ and $M_2$ of $\mathbb{N}$ satisfying the following properties
\begin{enumerate}
\item[a)]\label{propa} $\psi(0) = 0$, $\psi(1) = 1$, $\psi(x)\nearrow \infty$, $\psi(x)/x\searrow 0$ as $x\rightarrow\infty$;
\item[b)] $\psi$ is concave on $[1,\infty)$;
\item[c)] $\psi(x)\leq \sqrt{x}$ for all $x\geq 1$, and for each $m\in M_1$, we have
$$\psi(x) \ =\ \sqrt{x}, \mbox{ for all }x\in [\log m, m];$$
\item[d)] for each $n\in M_2$, we have
$$\theta^2_{(\alpha+1, n)}(x)\ \leq\  \psi(x)  \ \leq\ 2\theta^2_{(\alpha+1, n)}(x), \mbox{ for all }x\in [n, s_{(\alpha+1, n)}(n)].$$
\end{enumerate}

To obtain such a function $\psi$ and sets $M_1, M_2$, we choose integers $m_0 < m_1 < n_1 < m_2 < n_2 < m_3 < n_3 < \cdots$ such that 
\begin{itemize}
\item $m_0 = 1$ and $m_1\ge 10^5$;
\item for any $i\in \mathbb{N}$, 
\begin{equation}\label{e17}s_{(\alpha, m_i)}(m_i) \ =\ s_{(\alpha+1, m_i)}(1)\ <\ \log n_i\ <\ s_{(\alpha+2, n_i)}(1)\ <\ \sqrt{\log m_{i+1}}.\end{equation}
\end{itemize}
Put 
$$\widetilde{\psi}(x)  \ =\ \begin{cases} 1, &\mbox{ if }x = 1,\\  \sqrt{x}, &\mbox{ if }x\in [\log m_i, m_i],\mbox{ for }i = 1, 2, 3, \ldots,\\ \theta^2_{(\alpha+1, n_i)}(x), &\mbox{ if }x\in [n_i, s_{(\alpha+2, n_i)}(1)], \mbox{ for }i = 1, 2, 3, \ldots,\\
\mbox{ by linear interpolation}, &\mbox{ otherwise}.\end{cases}$$

Let $M_1 = \{m_j: j\in \mathbb{N}\}$ and $M_2 = \{n_j:j\in\mathbb{N}\}$. Thanks to Proposition \ref{ppp0}, $\widetilde{\psi}(x)$ satisfies c). By construction, $\widetilde{\psi}(x)$ satisfies d). Furthermore, \eqref{e17} gives 
$$\widetilde{\psi}(m_i) \ =\ \sqrt{m_i}\ <\ \log^2 n_i\ =\ \widetilde{\psi}(n_i)$$
and
\begin{align*}
    \widetilde{\psi}(s_{(\alpha+2, n_i)}(1))\ =\ \theta^2_{(\alpha+1, n_i)}(s_{(\alpha+2, n_i)}(1))&\ \le\ \sqrt{s_{(\alpha+2, n_i)}(1)}\\
    &\ <\ \sqrt{\log m_{i+1}}\ =\  \widetilde{\psi}(\log m_{i+1});
\end{align*}
hence, $\widetilde{\psi}(x)\nearrow \infty$. Finally, we verify that 
$\widetilde{\psi}(x)/x\searrow 0$. By Propositions \ref{ppp0} and \ref{ppp1}, $\lim_{x\rightarrow\infty}\widetilde{\psi}(x)/x = 0$ with $\widetilde{\psi}(x)/x$ decreasing on $[\log m_i, m_i]$ and $[n_i, s_{(\alpha+2, n_i)}(1)]$, and
\begin{align*}
    &\frac{\widetilde{\psi}(n_i)}{n_i}\ =\ \frac{\log^2 n_i}{n_i}\ <\ \frac{1}{\log n_i} \ < \ \frac{1}{\sqrt{m_i}}\ = \ \frac{\widetilde{\psi}(m_i)}{m_i},\\
    &\frac{\widetilde{\psi}(\log m_{i+1})}{\log m_{i+1}}\ =\ \frac{1}{\sqrt{\log m_{i+1}}}\ <\ \frac{1}{s_{(\alpha+2, n_i)}(1)}\ <\ \frac{\widetilde{\psi}(s_{(\alpha+2, n_i)}(1))}{s_{(\alpha+2, n_i)}(1)}.
\end{align*}

Recall from \cite[pg.\ 46]{DKOSZ} that a function $g(x): [1,\infty)\rightarrow \mathbb{R}^+$ is called \textit{fundamental} if 
it is increasing and $x\mapsto g(x)/x$ is decreasing. Our function $\widetilde{\psi}(x)$ is fundamental. 
By \cite[Lemma 7]{DKOSZ}, there exists the smallest concave fundamental function $\psi: [1,\infty)\rightarrow \mathbb{R}^+$ that dominates $\widetilde{\psi}$, i.e.,  
\begin{equation*}\widetilde{\psi}(x)\ \leq\ \psi(x)\ \leq\ 2\widetilde{\psi}(x).\end{equation*} 
Since $\sqrt{x}$ is a concave function which dominates $\widetilde{\psi}(x)$, it follows that $\psi(x)\le \sqrt{x}$, for $x\ge 1$. On $[0,1]$, we set $\psi(x) = x$. Therefore, $\psi$ satisfies all of a), b), c), and d).

Now define $\phi(x) = \sqrt{\psi(x)}$. It follows that $\phi$ satisfies a) and b). Moreover, we deduce that
\begin{enumerate}
\item[e)] for all $x\in [1,\infty)$, 
\begin{equation}\label{re31}\phi(x)\leq \sqrt[4]{x},\end{equation}
and for each $m\in M_1$, 
\begin{equation}\label{re32}\phi(x) \ =\ \sqrt[4]{x}, \mbox{ for all }x\in [\log m, m];\end{equation}
\item[f)] for each $n\in M_2$, we have
$$\theta_{(\alpha+1, n)}(x)\ \leq\ \phi(x)\ \leq\ \sqrt{2}\theta_{(\alpha+1, n)}(x), \mbox{ for all }x\in [n, s_{(\alpha+2, n)}(1)].$$
\end{enumerate}

\subsection{An $\boldsymbol{(\alpha, \alpha+1)}$-quasi-greedy basis for $\boldsymbol{\alpha\ge 0}$}Recall from Section \ref{Schreierfam} that given $A\in \Max(\mathcal{S}_{\alpha})$,  $x_{(\alpha, A)}$ is the repeated average of order $\alpha$ with support $A$. For $x = (x_i)_{i=1}^\infty\in c_{00}$, let 
\begin{align*}
&\|x\|_1\ =\ \sup\left\{\frac{\phi(s)}s \sum_{j=1}^s\sum_{i\in A_j} x_{(\alpha,A_j)}(i) |x_{\pi(i)}| : \begin{matrix} s\le A_1<\ldots< A_s \text{ in }\Max({\mathcal S}_\alpha)\\
          \pi: \bigcup_{j=1}^s A_j\to {\mathbb N}\text{ strictly increasing},\\ \text{with } \pi\big(\bigcup_{j=1}^s A_j\big)\in {\mathcal S}_1
\end{matrix}\right\},\\
&\|x\|_2\ =\ \max_{m\in M_2}\sum_{k=1}^m \phi(s_{(\alpha+1, m)}(k-1))\sum_{i\in A(\alpha+1, m, k)}x_{(\alpha+1, A(\alpha+1, m, k))}(i)|x_i|,\\
&\|x\|_3 \ =\ \max_{m\in M_1}\left(\sum_{k=1}^{m}(\psi(k)-\psi(k-1))\sum_{i\in A(\alpha, m, k)} x_{(\alpha, A(\alpha, m, k))}(i)x_i^2\right)^{1/2}, \mbox{ and}\\
&\|x\|_4 \ =\ \max_{m\in M_1}\max_{i_0\in \mathbb{N}}\left|\sum_{k=1}^m(\phi(k)-\phi(k-1))\sum_{i\in A(\alpha, m, k), i\le i_0}x_{(\alpha, A(\alpha, m, k))}(i)x_i\right|.\\
\end{align*}

Let $X$ be the completion of $c_{00}$ with respect to the norm $\|\cdot\| := \max_{1\leq i\leq 4}\|\cdot\|_i$. Then $(e_i)_i$ is normalized. Specifically, the norm of $e_i$ is realized by setting $s =1, A_1 = A(\alpha, 1, 1) = \{1\}$, and $\pi(1) = i$ in the definition of $\|\cdot\|_1$. (Note that $\{1\}\in \Max(\mathcal{S}_\alpha)$ for all $\alpha$.) This also shows that $\|(x_i)_i\|_1\ge \max_{i\ge 1}|x_i|$.

\begin{rek}\normalfont
When $\alpha = 0$, we can use a slightly simpler norm $\|\cdot\|_1$ without the map $\pi$. In particular,
\begin{align*}
    \|x\|_1&\ =\ \max_{F\in \mathcal{S}_1, F\neq \emptyset} \frac{\phi(|F|)}{|F|}\sum_{i\in F}|x_i|,\\
    \|x\|_2&\ =\ \max_{m\in M_2}\sum_{k=1}^m \frac{\phi(|A(1, m, k)|)}{|A(1, m, k)|}\sum_{i\in A(1, m, k)}|x_i|,\\
    \|x\|_3&\ =\ \max_{m\in M_1}\left(\sum_{k=1}^{m}(\psi(k)-\psi(k-1))x_{k+m-1}^2\right)^{1/2}, \mbox{ and }\\
    \|x\|_4&\ =\ \max_{m\in M_1}\max_{1\le j\le m}\left|\sum_{k=1}^{j}(\phi(k)-\phi(k-1))x_{k+m-1}\right|.
\end{align*}
Let us briefly explain why the case $\alpha = 0$ does not require the map $\pi$. For every nonempty $F \in [\mathbb{N}]^{<\infty}$, the set of the largest $\lfloor (|F|+1)/2\rfloor$ integers in $F$ is an $\mathcal{S}_1$-set, which can be decomposed into at least $|F|/2$ maximal $\mathcal{S}_0$-sets (or singletons). However, for $\alpha\ge 1$, there may not exist an $\mathcal{S}_{\alpha+1}$-subset of $F$ that can be decomposed into $|F|/2$ maximal $\mathcal{S}_\alpha$-sets. For example, if $\alpha = 1$, the set $F = \{10, 11, 12, \ldots, 18\}$ has no subset in $\Max(\mathcal{S}_1)$. This distinction between the cases $\alpha = 0$ and $\alpha\ge 1$ necessitates the introduction of the map $\pi$ when $\alpha \ge 1$.
\end{rek}

\subsection{$\boldsymbol{\mathcal{S}_{\alpha+1}}$-democratic but not $\boldsymbol{\mathcal{S}_{\alpha+2}}$-democratic}
For $\alpha < \omega_1$ and a set $E\in [\mathbb{N}]^{<\infty}$, let $t_\alpha(E)$ be the largest nonnegative integer such that there are sets $A_1 < A_2 < \cdots < A_{t_\alpha(E)}$ in $\Max(\mathcal{S}_\alpha)$ with $\bigcup_{i=1}^{t_{\alpha}(E)}A_i\subset E$.

\begin{lem}\label{l10}
For $\alpha < \omega_1$ and $E\in \mathcal{S}_{\alpha+1}$, we have $t_\alpha(E)\le m$, 
where $m$ is the smallest positive integer such that $[m, m+|E|-1]\in \mathcal{S}_{\alpha+1}$. 
\end{lem}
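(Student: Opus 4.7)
The plan is to compare the given packing $A_1<\dots<A_\ell$ realizing $\ell=t_\alpha(E)$ with the \emph{canonical} (leftmost) packing of the same combinatorial shape, and to read off the bound on $m$ from the resulting size estimate. First, I would show $\ell\le\min A_1$. Since $\mathcal{S}_{\alpha+1}$ is hereditary, $F:=\bigsqcup_{i=1}^{\ell} A_i$ lies in $\mathcal{S}_{\alpha+1}$, so by definition $F=\bigsqcup_{j=1}^{n} G_j$ with $G_j\in\mathcal{S}_\alpha$, $G_1<\dots<G_n$ and $n\le\min G_1=\min A_1$. It thus suffices to prove $\ell\le n$, which I would do by induction on $n$. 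In the base case $n=1$, any second set $A_2$ would force $A_1\cup\{\min A_2\}\subseteq G_1\in\mathcal{S}_\alpha$, contradicting maximality of $A_1$ via hereditariness. For the inductive step, $G_1$ is the initial segment of $F$ of size $|G_1|$; if $|G_1|>|A_1|$ then $G_1\supseteq A_1\cup\{\min A_2\}$, the same contradiction, so $G_1\subseteq A_1$, and the inductive hypothesis applied to $F\setminus G_1=G_2\sqcup\dots\sqcup G_n$, which still contains $A_2,\dots,A_\ell$, gives $\ell-1\le n-1$.

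Second, for each $\beta<\omega_1$ and $k\in\mathbb{N}$, I would introduce the canonical max $\mathcal{S}_\beta$-set $C_\beta(k)$ with $\min C_\beta(k)=k$, obtained from the recursive representation by choosing each block to begin right after the previous one. A transfinite induction on $\beta$ shows that $C_\beta(k)$ is the interval $[k,k+\sigma_\beta(k)-1]$ for a function $\sigma_\beta$ which equals the minimum possible size of a max $\mathcal{S}_\beta$-set with min $k$ (so $|A|\ge\sigma_\beta(\min A)$ for every $A\in\Max(\mathcal{S}_\beta)$); moreover $\sigma_\beta$ is non-decreasing in $k$, and $\sigma_{\alpha+1}$ is strictly increasing. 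In particular $C_{\alpha+1}(\ell)$ is the concatenation of $\ell$ canonical blocks $B_1<\dots<B_\ell$ with $\min B_1=\ell$. Starting from $\min A_1\ge\ell=\min B_1$ and using $\min A_{i+1}\ge\min A_i+|A_i|\ge\min A_i+\sigma_\alpha(\min A_i)$ together with $\min B_{i+1}=\min B_i+\sigma_\alpha(\min B_i)$, induction on $i$ yields $\min A_i\ge\min B_i$, and hence $|A_i|\ge\sigma_\alpha(\min A_i)\ge\sigma_\alpha(\min B_i)=|B_i|$ for every $i$. Summing gives $|E|\ge\sum_i|A_i|\ge\sum_i|B_i|=\sigma_{\alpha+1}(\ell)$.

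Finally, observe that $[m,m+|E|-1]\in\mathcal{S}_{\alpha+1}$ if and only if $\sigma_{\alpha+1}(m)\ge|E|$: any $\mathcal{S}_{\alpha+1}$-decomposition of this interval into $n\le m$ blocks in $\mathcal{S}_\alpha$ must consist of sub-intervals $G_j=[a_j,b_j]$ with $|G_j|\le\sigma_\alpha(a_j)$, and maximizing the total over all choices of $|G_j|$ and $n\le m$ yields $|E|\le\sigma_{\alpha+1}(m)$; conversely, when $|E|\le\sigma_{\alpha+1}(m)$ one covers $[m,m+|E|-1]$ by successive canonical blocks (truncating the last if necessary). Hence $m=\min\{k:\sigma_{\alpha+1}(k)\ge|E|\}$, and combined with $\sigma_{\alpha+1}(\ell)\le|E|$ together with strict monotonicity of $\sigma_{\alpha+1}$ this forces $m\ge\ell$, completing the proof. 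The main obstacle is the induction on $n$ in the first step: the $G_j$- and $A_i$-partitions of $F$ need not be aligned since a given $G_j$ may straddle several $A_i$, and the initial-segment observation $G_1\subseteq A_1$ is what unlocks the induction by peeling off one block at a time.
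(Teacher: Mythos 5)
Your argument is correct in substance, but it takes a genuinely different route from the paper's, whose proof is a short contradiction: if $m<t_\alpha(E)$, then since $E\in\mathcal{S}_{\alpha+1}$ one has $t_\alpha(E)\le\min E$ (asserted without proof), hence $m<\min E$; the spreading property is then invoked to get $t_\alpha([m,m+|E|-1])\ge t_\alpha(E)$, while $t_\alpha([m,m+|E|-1])\le m$ because that interval lies in $\mathcal{S}_{\alpha+1}$ and has minimum $m$. Your first step is precisely a proof of the paper's unproved packing bound, and your canonical-block machinery replaces the paper's spreading step; note the latter is delicate if read literally, since shifting a maximal $\mathcal{S}_\alpha$-set to the left need not keep it in $\mathcal{S}_\alpha$, and an honest proof of $t_\alpha([m,m+|E|-1])\ge t_\alpha(E)$ amounts to the same block-by-block comparison with canonical intervals that you carry out through $\sigma_\beta$. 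So your version is longer but more self-contained and quantitative (the facts $|A|\ge\sigma_\alpha(\min A)$ for $A\in\Max(\mathcal{S}_\alpha)$, $|E|\ge\sigma_{\alpha+1}(t_\alpha(E))$, and $m=\min\{k:\sigma_{\alpha+1}(k)\ge|E|\}$ are all true and reusable), whereas the paper's is shorter but leaves two nontrivial steps to the reader. Two details to tighten: in your first step, after peeling off $G_1\subseteq A_1$ the remaining set $F\setminus G_1$ is no longer exactly $A_2\cup\dots\cup A_\ell$, so the induction should be stated for containment (or intersect $G_2,\dots,G_n$ with $\bigcup_{i\ge 2}A_i$, which stays in $\mathcal{S}_\alpha$ by heredity); and the strict monotonicity of $\sigma_{\alpha+1}$, which your last inequality genuinely needs, deserves its one-line proof from the decomposition of the canonical $\mathcal{S}_{\alpha+1}$-interval into $\min$-many canonical $\mathcal{S}_\alpha$-blocks together with the monotonicity of $\sigma_\alpha$.
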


\begin{proof}
Let $m\in \mathbb{N}$ be the smallest positive integer such that $[m, m+|E|-1]\in \mathcal{S}_{\alpha+1}$. 
Suppose, for a contradiction, that $m < t_\alpha(E)$. Since $E\in \mathcal{S}_{\alpha+1}$, 
we have $t_\alpha(E) \le \min E$, and thus, $m < \min E$. It follows from $m < \min E$ and the spreading property that
\begin{equation}\label{re51}t_\alpha([m, m+|E|-1]) \geq t_\alpha(E).\end{equation}
Since $[m, m+|E|-1]\in \mathcal{S}_{\alpha+1}$, 
\begin{equation}\label{re52}m \ \ge\ t_{\alpha}([m, m+|E|-1]).\end{equation}
From \eqref{re51} and \eqref{re52}, we obtain $m \geq t_{\alpha}(E)$. This contradicts our supposition. 
\end{proof}

Our next several results establish bounds for $\|1_E\|_i$, with or without the condition $E\in \mathcal{S}_{\alpha+1}$, such that the bounds depend only on $|E|$. 

\begin{lem}\label{l1}
For a nonempty set $E\in [\mathbb{N}]^{<\infty}$, it holds that 
$$\|1_E\|_1\ \leq\ 6\phi(m+1),$$
where $m$ is the least positive integer such that $[m, m+|E|-1]\in \mathcal{S}_{\alpha+1}$.
\end{lem}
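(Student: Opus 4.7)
The plan is to bound the supremum in the definition of $\|1_E\|_1$ uniformly over every admissible tuple $(s,A_1<\cdots<A_s,\pi)$. Fix such a tuple and set $F:=\pi^{-1}(E)\cap\bigcup_{j=1}^s A_j$. Because $|1_E(\pi(i))|=1$ exactly when $i\in F$, and because the $A_j$ are pairwise disjoint with $\supp x_{(\alpha,A_j)}=A_j$ by (P3), the quantity inside the supremum equals $\tfrac{\phi(s)}{s}\sum_{i\in F}x_{(\alpha,A_{j(i)})}(i)$, where $j(i)$ is the unique index $j\in\{1,\dots,s\}$ with $i\in A_j$. I will show this is at most $6\phi(m+1)$ by separate arguments in the two regimes $s\le m$ and $s\ge m+1$.

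When $s\le m$, property (P1) implies $\sum_{i\in A_j}x_{(\alpha,A_j)}(i)\le 1$ for each $j$, so $\sum_{i\in F}x_{(\alpha,A_{j(i)})}(i)\le s$ and consequently $\tfrac{\phi(s)}{s}\sum\le\phi(s)\le\phi(m+1)$, well within the target bound.

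The main case is $s\ge m+1$, handled via Lemma~\ref{boundby6}. Since $\pi$ is injective, $|F|\le|E|$; combining this with the minimality of $m$ and the hereditary property of $\mathcal{S}_{\alpha+1}$, one has $F':=[m,m+|F|-1]\in\mathcal{S}_{\alpha+1}$. Using \eqref{er11}, I decompose $F'=\bigsqcup_{k=1}^r E'_k$ with $E'_1<\cdots<E'_r$ in $\mathcal{S}_\alpha$ and $r\le\min F'=m$. Because $\min F\ge\min A_1\ge s>m$, the order-preserving bijection $\sigma:F'\to F$ satisfies $\sigma(x)\ge x$ for every $x\in F'$, and the spreading property of $\mathcal{S}_\alpha$ then ensures that the images $E_k:=\sigma(E'_k)$ lie in $\mathcal{S}_\alpha$ and partition $F$. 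Applying Lemma~\ref{boundby6} to $A_1<\cdots<A_s$ in $\Max(\mathcal{S}_\alpha)$ and to each $E_k\in\mathcal{S}_\alpha$ yields $\sum_{i\in E_k}\sum_{j=1}^s x_{(\alpha,A_j)}(i)\le 6$; summing over $k$ and using that the $x_{(\alpha,A_j)}$ have pairwise disjoint supports gives $\sum_{i\in F}x_{(\alpha,A_{j(i)})}(i)\le 6r\le 6m$. Finally, since $\psi$ is fundamental and $\phi=\sqrt{\psi}$, a direct computation shows that $\phi(x)/x$ is decreasing on $(0,\infty)$, so $\phi(s)/s\le\phi(m+1)/(m+1)$, and thus $\tfrac{\phi(s)}{s}\cdot 6m\le\tfrac{6m}{m+1}\phi(m+1)<6\phi(m+1)$.

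The principal obstacle is producing a decomposition of $F$ into few enough $\mathcal{S}_\alpha$-pieces for Lemma~\ref{boundby6} to deliver a useful bound: a naive splitting like $F=\bigcup_j(A_j\cap F)$ could use up to $s$ pieces, which is far too many. The remedy is to transport the canonical $\mathcal{S}_{\alpha+1}$-decomposition of the shifted initial segment $[m,m+|F|-1]$ back to $F$ through a spreading bijection, which is legitimate precisely because the case hypothesis $s\ge m+1$ forces $\min F>m$.
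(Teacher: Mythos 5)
Your proof is correct. It follows the paper's overall architecture — the same case split $s\le m$ versus $s\ge m+1$, Lemma~\ref{boundby6} as the workhorse, and the monotonicity of $\phi(x)/x$ to convert $\tfrac{\phi(s)}{s}\cdot O(m)$ into $O(\phi(m+1))$ — but it handles the crucial counting step differently. The paper writes $\pi^{-1}(E)$ as a union of $t_\alpha(\pi^{-1}(E))+1$ sets in $\mathcal{S}_\alpha$ (a greedy decomposition it does not spell out) and then invokes Lemma~\ref{l10}, applied to $\pi^{-1}(E)\subset\bigcup_{j=1}^s A_j\in\mathcal{S}_{\alpha+1}$, to bound that count by $m+1$. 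You instead never touch $t_\alpha$ or Lemma~\ref{l10}: you note $[m,m+|F|-1]\in\mathcal{S}_{\alpha+1}$ by heredity, decompose this interval into $r\le m$ sets in $\mathcal{S}_\alpha$ directly from the defining identity \eqref{er11}, and transport that decomposition onto $F$ by the spreading property, which is legitimate because the case hypothesis $s\ge m+1$ forces every element of $F$ to dominate the corresponding element of the interval. The two mechanisms are cousins — Lemma~\ref{l10} is itself a spreading argument against the same interval — but yours pulls the interval's canonical decomposition up to $F$, while the paper pushes $F$'s combinatorial complexity down to the interval; your route is marginally more self-contained (no appeal to the unproved decomposition into $t_\alpha+1$ pieces, no $\mathcal{S}_{\alpha+1}$-membership of $\pi^{-1}(E)$ needed), at the cost of constructing the bijection $\sigma$ explicitly, and it even yields the slightly sharper factor $6m$ in place of $6(m+1)$.
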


\begin{proof}
Let $s\le A_1 < A_2 < \cdots < A_s$ be in $\Max(\mathcal{S}_\alpha)$ and let $\pi: \bigcup_{i=1}^s A_i\rightarrow \mathbb{N}$ be a strictly increasing map with $\pi(\bigcup_{i=1}^s A_i)\in \mathcal{S}_1$. We have
\begin{align*}&\frac{\phi(s)}{s}\sum_{j=1}^s\sum_{i\in A_j} x_{(\alpha, A_j)}(i)|{(1_E)}_{\pi(i)}|\ =\ \frac{\phi(s)}{s}\sum_{j=1}^s\sum_{i\in A_j\cap \pi^{-1}(E)} x_{(\alpha, A_j)}(i).
\end{align*}

If $s < m+1$, then 
$$\frac{\phi(s)}{s}\sum_{j=1}^s\sum_{i\in A_j\cap \pi^{-1}(E)} x_{(\alpha, A_j)}(i)\ \le\ \phi(s)\ \le\ \phi(m+1).$$

Suppose that $s\ge m+1$. Let $m'$ be the smallest positive integer such that
$$[m', m'+|\pi^{-1}(E)|-1]\ \in\ \mathcal{S}_{\alpha+1}.$$
Then $m'\le m$ because $|\pi^{-1}(E)|\le |E|$. 
Write $\pi^{-1}(E) = \bigcup_{i=1}^{t_\alpha(\pi^{-1}(E))+1}B_{i}$ for $B_i\in \mathcal{S}_{\alpha}$. We have
\begin{align*}
&\frac{\phi(s)}{s}\sum_{j=1}^s\sum_{i\in A_j\cap \pi^{-1}(E)} x_{(\alpha, A_j)}(i)\\
&\ =\ \frac{\phi(s)}{s}\sum_{\ell=1}^{t_\alpha(\pi^{-1}(E))+1}\sum_{j=1}^s\sum_{i\in A_j\cap B_\ell} x_{(\alpha, A_j)}(i)\\
&\ \leq\ \frac{\phi(s)}{s}6(t_\alpha(\pi^{-1}(E))+1) \mbox{\quad (by Lemma \ref{boundby6})}\\
&\ \leq\ 6\frac{\phi(s)}{s}(m'+1) \mbox{\quad (by Lemma \ref{l10} applied to $\pi^{-1}(E)\subset \bigcup_{i=1}^sA_i\in \mathcal{S}_{\alpha+1}$)}\\
&\ \leq\ 6\frac{\phi(s)}{s}(m+1)\ \leq\ 6\phi(m+1) \mbox{\quad (by Property a) of $\phi$)}.
\end{align*}
\end{proof}
We need the following lemma to prove a lower bound for $\|1_E\|_1$. 

\begin{lem}\label{al1}
Let $1\le \alpha < \omega_1$ and $m\in \mathbb{N}$. Define $p_1$ and $p_2$ to be the smallest positive integers such that $[p_1, p_1 + m - 1]\in \mathcal{S}_\alpha$ and $[p_2, p_2 + \lfloor (m+1)/2\rfloor - 1]\in \mathcal{S}_\alpha$, respectively. Then $p_2\ge p_1/2$.    
\end{lem}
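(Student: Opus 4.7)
The plan is to reduce the lemma to the following ``doubling'' statement, which I would prove by transfinite induction on $\alpha$: \emph{for every $\alpha \ge 1$, if $[p, p + L - 1] \in \mathcal{S}_\alpha$, then $[2p, 2p + 2L - 1] \in \mathcal{S}_\alpha$.} Granting this, take $L = \lfloor (m+1)/2 \rfloor$ so that $2L \ge m$; the statement applied to $[p_2, p_2 + L - 1] \in \mathcal{S}_\alpha$ yields $[2p_2, 2p_2 + 2L - 1] \in \mathcal{S}_\alpha$, and then the hereditary property of $\mathcal{S}_\alpha$ gives $[2p_2, 2p_2 + m - 1] \in \mathcal{S}_\alpha$, so $p_1 \le 2p_2$, as required.

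The base case $\alpha = 1$ is immediate, since $[p, p+L-1] \in \mathcal{S}_1$ is equivalent to $L \le p$, from which $2L \le 2p$. For the successor step $\alpha = \beta + 1$ with $\beta \ge 1$: the set $[p, p+L-1]$ admits a decomposition into $k \le p$ consecutive $\mathcal{S}_\beta$-intervals $E_i = [a_i, a_{i+1} - 1]$, $1 \le i \le k$, with $a_1 = p$ and $a_{k+1} = p+L$ (the partition pieces of an interval are themselves subintervals). Applying the inductive hypothesis to each $E_i$ yields $[2 a_i, 2 a_{i+1} - 1] \in \mathcal{S}_\beta$, and these $k$ intervals partition $[2p, 2p+2L-1]$. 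Since $k \le p \le 2p$, this certifies $[2p, 2p + 2L - 1] \in \mathcal{S}_{\beta+1}$.

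For the limit case: from $[p, p+L-1] \in \mathcal{S}_\alpha$ pick some $n \le p$ with $[p, p+L-1] \in \mathcal{S}_{\lambda(\alpha, n)+1}$. Since the families $\mathcal{S}_{\lambda(\alpha, n)+1}$ are monotone non-decreasing in $n$ (a consequence of \eqref{er13}, as $\mathcal{S}_\gamma \subset \mathcal{S}_\delta$ implies $\mathcal{S}_{\gamma+1}\subset \mathcal{S}_{\delta+1}$), we may take $n = p$; applying the inductive hypothesis at the strictly smaller ordinal $\lambda(\alpha, p) + 1 < \alpha$ gives $[2p, 2p+2L-1] \in \mathcal{S}_{\lambda(\alpha, p)+1}$, and then $p \le 2p$ certifies $[2p, 2p + 2L - 1] \in \mathcal{S}_\alpha$. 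The main subtlety lies in this limit case, where the monotonicity of $\mathcal{S}_{\lambda(\alpha, n)+1}$ in $n$ is essential: it lets us reuse the same ordinal witness after doubling the starting point while still satisfying the minimum-constraint in the definition of $\mathcal{S}_\alpha$.
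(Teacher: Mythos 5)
Your argument is correct, but it follows a genuinely different route from the paper. You prove an auxiliary ``doubling'' lemma by transfinite induction --- if $[p,p+L-1]\in\mathcal{S}_\alpha$ then $[2p,2p+2L-1]\in\mathcal{S}_\alpha$ --- and deduce $p_1\le 2p_2$ via heredity and minimality; all three cases (base, successor, limit) check out, the key points being that consecutive blocks partitioning an interval are themselves intervals and that $\mathcal{S}_\gamma\subset\mathcal{S}_\delta$ implies $\mathcal{S}_{\gamma+1}\subset\mathcal{S}_{\delta+1}$. (In the limit case the monotonicity detour is actually unnecessary: the original witness $n\le p\le 2p$ already certifies membership of the doubled interval in $\mathcal{S}_\alpha$.) The paper argues differently and more directly: after disposing of $p_1\le 2$, it takes the recursive representation of $[p_2,p_2+\lfloor(m+1)/2\rfloor-1]$ into $p_2$ sets of $\mathcal{S}_{\lambda(\alpha,p_2)}$, shifts each set by $1$ (spreading), and adjoins the single extra interval $[p_2+\lfloor(m+1)/2\rfloor+1,p_2+m]$, which lies in $\mathcal{S}_1$; this exhibits $[p_2+1,p_2+m]$ as a union of $p_2+1$ admissible sets, hence in $\mathcal{S}_\alpha$, giving the sharper bound $p_1\le p_2+1$ without any real induction. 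Your approach costs a transfinite induction but produces a clean, reusable dilation-stability statement for Schreier families and only the bound $p_1\le 2p_2$, which is exactly what the lemma needs; the paper's approach is shorter, exploits the fact that the discarded half of the interval fits into one $\mathcal{S}_1$ block, and yields a stronger inequality. One small caveat, shared with the paper's own usage: you implicitly use that a nonempty $E\in\mathcal{S}_{\beta+1}$ admits a decomposition into at most $\min E$ nonempty $\mathcal{S}_\beta$-blocks, which is the standard reading of \eqref{er11} once empty pieces are discarded.
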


\begin{proof}
If $\alpha = 1$, then $p_2  = \lfloor (m+1)/2\rfloor \ge m/2 = p_1/2$.

Assume that $\alpha\ge 2$ and that our claim is true for all $\beta < \alpha$. If $p_1 \le 2$, then $p_2\ge 1\ge p_1/2$. So we can assume that $p_1\ge 3$. It follows from the definition of $p_2$ that there are $A_1 < A_2 < \cdots < A_{p_2}$ in $\mathcal{S}_{\lambda(\alpha, p_2)}$ so that
$$[p_2, p_2 + \lfloor (m+1)/2\rfloor - 1]\ =\ \bigcup_{i=1}^{p_2} A_i.$$
Put $A'_i = A_i + 1 = \{a+1: a\in A_i\}\in \mathcal{S}_{\lambda(\alpha, p_2)}\subset \mathcal{S}_{\lambda(\alpha, p_2+1)}$. Then
\begin{equation}\label{ee10}[p_2+1, p_2 + m]\ =\ \bigcup_{i=1}^{p_2} A_i'\cup \underbrace{[p_2 + \lfloor (m+1)/2\rfloor + 1, p_2 + m]}_{\in \mathcal{S}_1\subset \mathcal{S}_{\lambda(\alpha, p_2 +1)}}\ =\ \bigcup_{i=1}^{p_2+1}A'_i,\end{equation}
with $A'_{p_2+1} = [p_2+\lfloor (m+1)/2\rfloor + 1, p_2 + m]$. It follows from \eqref{ee10} that
$[p_2+1, p_2 + m]\in \mathcal{S}_\alpha$. The minimality of $p_1$ gives that $p_1\le p_2 + 1$. Hence,
$$\frac{p_1}{2}\ \le\ p_1-1\ \le\ p_2.$$
\end{proof}

\begin{prop}\label{rp50}
For a nonempty set $E\in [\mathbb{N}]^{<\infty}$, it holds that 
$$\|1_E\|_1\ \ge\ \frac{1}{6}\phi(m+1),$$
where $m$ is the least positive integer such that $[m, m+|E|-1]\in \mathcal{S}_{\alpha+1}$.    
\end{prop}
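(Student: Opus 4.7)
The plan is to exhibit, in the supremum defining $\|\cdot\|_1$, an admissible tuple $(s,A_1,\ldots,A_s,\pi)$ whose value at $x=1_E$ is at least $\tfrac16\phi(m+1)$. The natural move is to arrange $\pi$ so that its image lies entirely in $E$, making $|(1_E)_{\pi(i)}|=1$ for every $i$; property (P1) then collapses the inner sum, reducing the tested quantity to exactly $\phi(s)$. The task becomes producing such a tuple with $s\gtrsim m$.

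First I set $q=\lceil|E|/2\rceil$ and let $F\subset E$ be the $q$ largest elements of $E$. Since the $i$-th element of $E$ is at least $i$, $\min F\geq|E|-q+1\geq q=|F|$, so $F\in\mathcal{S}_1$. Applying Lemma \ref{al1} with its $\alpha$ replaced by our $\alpha+1$ and its $m$ by $|E|$ (noting $\lfloor(|E|+1)/2\rfloor=q$), the smallest positive integer $p_2$ with $[p_2,p_2+q-1]\in\mathcal{S}_{\alpha+1}$ satisfies $p_2\geq m/2$.

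Next, assuming $p_2\geq 2$, the minimality of $p_2$ forces $[p_2-1,p_2+q-2]\notin\mathcal{S}_{\alpha+1}$. Using the identity $s_{(\alpha+1,n)}(1)=s_{(\alpha,n)}(n)$ from \eqref{re50}, this rearranges to $s_{(\alpha,p_2-1)}(p_2-1)-(p_2-1)<q$. Hence the $s:=p_2-1$ consecutive maximal $\mathcal{S}_\alpha$-sets $A_j:=A(\alpha,p_2-1,j)$ for $j=1,\ldots,s$ satisfy $s=\min A_1$ and $|\bigcup_{j=1}^s A_j|<|F|$. Choosing any strictly increasing $\pi:\bigcup_{j=1}^s A_j\to F$ yields an image in $F\subset E$ with $\pi(\bigcup A_j)\in\mathcal{S}_1$ by hereditariness, so the tuple is admissible and contributes $\phi(p_2-1)\geq\phi(m/2-1)$. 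Concavity of $\phi$ together with $\phi(0)=0$ gives $\phi(tx)\geq t\phi(x)$ for $t\in[0,1]$, and for $m\geq 4$ the ratio $(m/2-1)/(m+1)\geq 1/6$, yielding $\phi(m/2-1)\geq\tfrac16\phi(m+1)$.

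For the remaining edge cases ($m\leq 3$, which subsumes the only situation where $p_2=1$, since $p_2=1$ forces $|E|\leq 2$ and hence $m\leq 2$), the trivial bound $\|1_E\|_1\geq\|e_i\|_1=1\geq\tfrac16\phi(m+1)$ closes the argument, using $\phi(m+1)\leq(m+1)^{1/4}\leq\sqrt{2}$ from \eqref{re31}. I expect the main subtlety to lie in the choice of starting point $p_2-1$ rather than $p_2$: the natural union $\bigcup_{j=1}^{p_2}A(\alpha,p_2,j)=[p_2,s_{(\alpha,p_2)}(p_2)-1]$ can exceed $|F|$ in size, preventing any strictly increasing $\pi$ from embedding it into $F$; backing off by one step makes the union strictly shorter than $|F|$ at the price of a single set, a loss that the concavity of $\phi$ absorbs without essential damage. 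Lemma \ref{al1} is indispensable for controlling $p_2$ in terms of $m$.
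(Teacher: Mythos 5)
Your proposal is correct and takes essentially the same route as the paper's proof: pass to the top half of $E$ (an $\mathcal{S}_1$ set), invoke Lemma \ref{al1} with $\alpha+1$ to get the starting index $p\ge m/2$, and use the maximal $\mathcal{S}_{\alpha+1}$ interval beginning at $p-1$ (your $\bigcup_{j=1}^{p_2-1}A(\alpha,p_2-1,j)$, the paper's $F=[p-1,q]\in\Max(\mathcal{S}_{\alpha+1})$ with its recursive decomposition), embedded by a strictly increasing $\pi$ into that half, to witness $\|1_E\|_1\ge\phi(p-1)\ge\tfrac16\phi(m+1)$. The only differences are cosmetic (your explicit use of \eqref{re50} instead of the $\Max(\mathcal{S}_{\alpha+1})$ phrasing, and handling $m\le 3$ trivially where the paper takes $m\le 5$).
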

\begin{proof}
If $m\le 5$, we trivially have
$$\|1_E\|_1\ \ge\ 1\ \ge\ \frac{\phi(6)}{6}\ \ge\ \frac{1}{6}\phi(m+1).$$
Assume that $m \ge 6$.
Let $E'$ be the set containing the largest $\lfloor (|E|+1)/2\rfloor$ elements of $E$. Then 
$$\min E'\ \ge\ |E|-|E'| + 1\ =\ |E| - \left\lfloor \frac{|E|+1}{2}\right\rfloor + 1\ \ge\ \left\lfloor \frac{|E|+1}{2}\right\rfloor\ =\ |E'|.$$
Therefore, $E'\in \mathcal{S}_1$. Choose $p$ to be the smallest positive integer such that $[p, p+\lfloor (|E|+1)/2\rfloor-1]\in \mathcal{S}_{\alpha+1}$. 

If $p = 1$, then $|E|$ is $1$ or $2$. If the former, $m = 1$; if the latter, $m = 2$. Both cases contradict $m \ge 6$.

If $p\ge 2$, the definition of $p$ implies that $[p-1, p+\lfloor (|E|+1)/2\rfloor-2]\notin \mathcal{S}_{\alpha+1}$. 
Choose $q \ge p-1$ such that $F := [p-1, q]\in \Max(\mathcal{S}_{\alpha+1})$. Since 
$$|F| \ =\ |[p-1, q]|\ <\ \left|\left[p-1, p+\left\lfloor \frac{|E|+1}{2}\right\rfloor-2\right]\right|\ =\ \left\lfloor \frac{|E|+1}{2}\right\rfloor\ =\ |E'|,$$
we can define $\pi: F\rightarrow E'$ to be a strictly increasing map. Write $F = \bigcup_{i=1}^{p-1}A_i$ with $A_i\in \Max(\mathcal{S}_\alpha)$. We have
\begin{align*}
\|1_E\|_1\ \ge\ \frac{\phi(p-1)}{p-1}\sum_{j=1}^{p-1}\sum_{i\in A_j} x_{(\alpha, A_j)}(i)\ =\ \phi(p-1).
\end{align*}
By Lemma \ref{al1}, 
$$\phi(p-1)\ \ge\ \phi\left(\frac{m}{2}-1\right)\ \ge\ \phi\left(\frac{m+1}{6}\right)\ \ge\ \frac{1}{6}\phi(m+1).$$
This completes our proof.
\end{proof}

\begin{lem}\label{l2}
For $E\in \mathcal{S}_{\alpha+1}$, it holds that
$$\|1_E\|_2\ \leq\ 6\phi\left(m+1\right),$$
where $m$ is the least positive integer such that $[m, m+|E|-1]\in \mathcal{S}_{\alpha+1}$.
\end{lem}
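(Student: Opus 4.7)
My plan is to adapt the argument of Lemma~\ref{l1} to the weight structure of $\|\cdot\|_2$. Fix $n\in M_2$, abbreviate $B_k:=A(\alpha+1,n,k)$ and $w_k:=\phi(s_{(\alpha+1,n)}(k-1))=\phi(\min B_k)$, and set $a_k:=\sum_{i\in B_k\cap E}x_{(\alpha+1,B_k)}(i)$, so that $\|1_E\|_2=\max_{n\in M_2}S_n$ with $S_n:=\sum_{k=1}^{n}w_k a_k$. I would first unfold one level of the repeated average: writing $B_k=\bigcup_{j=1}^{\min B_k}B_{k,j}$ with $B_{k,j}\in\Max(\mathcal{S}_\alpha)$ and $x_{(\alpha+1,B_k)}=\tfrac{1}{\min B_k}\sum_j x_{(\alpha,B_{k,j})}$ yields
\[
S_n\ =\ \sum_k\frac{\phi(\min B_k)}{\min B_k}\sum_{i\in B_k\cap E}x_{(\alpha,B_{k,j(i)})}(i),
\]
where $j(i)$ is the unique $j$ with $i\in B_{k,j}$. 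Two key structural facts enter next: $\{B_{k,j}\}_{k,j}$ is a partition of $[n,\infty)$ into max-$\mathcal{S}_\alpha$ sets, and $\phi(x)/x$ is decreasing by the fundamental property of $\phi$.

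The next step is to decompose $E$ at level~$\alpha$. Lemma~\ref{l10} yields $t_\alpha(E)\le\tilde m$ (writing $\tilde m$ for the integer $m$ appearing in the statement), which, exactly as in the proof of Lemma~\ref{l1}, lets one write $E=\bigcup_{l=1}^p G_l$ with $G_l\in\mathcal{S}_\alpha$ and $p\le\tilde m+1$. Applying Lemma~\ref{boundby6} to each $G_l$ (at level~$\alpha$) against the partition $\{B_{k,j}\}$ bounds $\sum_{i\in G_l}x_{(\alpha,B_{k(i),j(i)})}(i)\le 6$, and summing in $l$ gives the crucial estimate
\[
\sum_{k,j}\sum_{i\in B_{k,j}\cap E}x_{(\alpha,B_{k,j})}(i)\ \le\ 6(\tilde m+1). \qquad(\ast)
\]
In the easy regime $n\ge\tilde m+1$, bounding $\phi(\min B_k)/\min B_k\le\phi(n)/n\le\phi(\tilde m+1)/(\tilde m+1)$ and combining with~$(\ast)$ closes the argument: $S_n\le\tfrac{\phi(\tilde m+1)}{\tilde m+1}\cdot 6(\tilde m+1)=6\phi(\tilde m+1)$.

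The hard part will be the complementary regime $n\le\tilde m$, where the uniform coefficient $\phi(n)/n$ is too loose. I would split the sum at $k^*:=\max\{k:\min B_k\le\tilde m+1\}$. For $k>k^*$ the previous argument applies verbatim to the sub-sum, using the restricted version of~$(\ast)$ applied to $E\cap[\min B_{k^*+1},\infty)$; for $k\le k^*$ one uses $w_k\le\phi(\tilde m+1)$ combined with $\sum_{k\le k^*}a_k\le 6$, a direct application of Lemma~\ref{boundby6} at level $\alpha+1$ to $F=E\in\mathcal{S}_{\alpha+1}$ and the sub-family $(B_k)_{k\le k^*}$. The delicate point, and the main obstacle for securing the stated constant~$6$, is to coordinate these two applications of Lemma~\ref{boundby6} so that the single budget of $6$ is distributed between the two ranges rather than being counted twice; without this refinement one naively obtains $12\phi(\tilde m+1)$, and it is precisely the structural link between the stopping index $k^*$ and the Schreier-theoretic characterisation of $\tilde m$ via Lemma~\ref{l10} that is expected to produce the clean factor~$6$.
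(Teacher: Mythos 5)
Your argument is complete only in the regime $n\ge m+1$: there the combination of $\phi(x)/x$ decreasing with your estimate $(\ast)$ does give $S_n\le 6\phi(m+1)$, exactly as you say. But in the complementary regime $n\le m$ you do not close the proof of the stated inequality. Your split at $k^*$ gives $6\phi(m+1)$ from each of the two ranges, hence $12\phi(m+1)$, and the ``coordination'' of the two applications of Lemma~\ref{boundby6} that you hope will recover the single factor $6$ is asserted only as an expectation, with no mechanism supplied. Nor is it clear one exists along these lines: for $k>k^*$ the level-$(\alpha+1)$ masses $a_k$ are obtained from the level-$\alpha$ masses by dividing by $\min B_k$, which can be far larger than $m+1$, so the global budget $\sum_k a_k\le 6$ does not transfer any useful saving to the tail sum, and the link between $k^*$ and Lemma~\ref{l10} does not by itself produce it. Since the constant $6$ is part of the statement (and is what feeds the constant $36$ in the democracy proposition), this is a genuine gap; what you have, if written out carefully, is the weaker bound $12\phi(m+1)$.

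The missing idea, which is how the paper avoids any case distinction on $n$, is to use the concavity of $\phi$ together with $\phi(0)=0$ (Jensen's inequality with weights $a_k/6$): since $\sum_k a_k\le 6$ by Lemma~\ref{boundby6} applied at level $\alpha+1$ to $F=E\in\mathcal{S}_{\alpha+1}$, one gets
\begin{equation*}
\sum_{k=1}^{n}\phi\bigl(s_{(\alpha+1,n)}(k-1)\bigr)\,a_k\ \le\ 6\,\phi\Bigl(\tfrac{1}{6}\sum_{k=1}^{n}s_{(\alpha+1,n)}(k-1)\,a_k\Bigr),
\end{equation*}
and the argument of $\phi$ on the right is exactly your unfolded level-$\alpha$ mass, because the factor $s_{(\alpha+1,n)}(k-1)$ cancels the normalisation $1/\min B_k$ of the repeated average. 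Your estimate $(\ast)$ (decomposition of $E$ into $t_\alpha(E)+1$ sets in $\mathcal{S}_\alpha$ plus Lemma~\ref{boundby6} at level $\alpha$) then bounds this by $6(t_\alpha(E)+1)$, and Lemma~\ref{l10} gives $6\phi(t_\alpha(E)+1)\le 6\phi(m+1)$ uniformly in $n$. In short: replace your use of the monotonicity of $\phi(x)/x$ (which forces the split and the doubled constant) by this single concavity step, and both of your ingredients combine without being counted twice.
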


\begin{proof}
Pick $E\in \mathcal{S}_{\alpha+1}$ and $n\in M_2$. 
We have
\begin{align*}
&T(\alpha, n, E)\ :=\ \sum_{k=1}^n \phi(s_{(\alpha+1, n)}(k-1))\sum_{i\in A(\alpha+1, n, k)}x_{(\alpha+1, A(\alpha+1, n, k))}(i)|(1_E)_i|\\
&\ =\ \sum_{k=1}^n \phi(s_{(\alpha+1, n)}(k-1))\sum_{i\in A(\alpha+1, n, k)\cap E}x_{(\alpha+1, A(\alpha+1, n, k))}(i).
\end{align*}
By Lemma \ref{boundby6},
$$\sum_{k=1}^n \sum_{i\in A(\alpha+1, n, k)\cap E}x_{(\alpha+1, A(\alpha+1, n, k))}(i)\ \le\ 6,$$
so the concavity of $\phi$ implies that 
$$
T(\alpha, n, E)\ \le\  6\phi\left(\frac{1}{6}\sum_{k=1}^ns_{(\alpha+1, n)}(k-1)\sum_{i\in A(\alpha+1, n, k)\cap E}x_{(\alpha+1, A(\alpha+1, n, k))}(i)\right).
$$
Write
$$A(\alpha+1, n, k) = \bigcup_{j=1}^{s_{(\alpha+1, n)}(k-1)} B_{k,j}, \mbox{ with }B_{k,j}\in \Max(\mathcal{S}_\alpha)$$
and $E = \bigcup_{\ell=1}^{t_\alpha(E)+1}A_\ell$, with $A_\ell\in \mathcal{S}_\alpha$ to have
$${x_{(\alpha+1, A(\alpha+1, n, k))}}\ =\ \frac{1}{s_{(\alpha+1, n)}(k-1)}\sum_{j=1}^{s_{(\alpha+1, n)}(k-1)}x_{(\alpha, B_{k,j})},$$
and thus,
\begin{align*}
T(\alpha, n, E)&\ \le\  6\phi\left(\frac{1}{6}\sum_{\ell=1}^{t_\alpha(E)+1}\sum_{k=1}^n \sum_{j=1}^{s_{(\alpha+1, n)}(k-1)}\sum_{i\in B_{k,j}\cap A_\ell}x_{(\alpha, B_{k,j})}(i)\right)\\
&\ \le\ 6\phi\left(\frac{1}{6}\sum_{\ell=1}^{t_\alpha(E)+1}6\right) \quad \mbox{ (by Lemma \ref{boundby6})}\\
&\ =\ 6\phi(t_\alpha(E)+1)\ \le\ 6\phi(m+1)\quad \mbox{ (by Lemma \ref{l10})},
\end{align*}
as desired.
\end{proof}

\begin{lem}\label{l3}
For $E\in [\mathbb{N}]^{<\infty}$, it holds that
\begin{align}
\|1_E\|_3&\ \leq\ \sqrt{6}\phi(m+1), \mbox{ and}\label{e6}\\
\|1_E\|_4&\ \leq\ 6\phi(m+1)\label{e7},
\end{align}
where $m$ is the least positive integer such that $[m, m+|E|-1]\in \mathcal{S}_{\alpha+1}$.
\end{lem}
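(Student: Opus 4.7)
The plan is to reduce each inner expression defining $\|1_E\|_3$ or $\|1_E\|_4$ to a setting where Lemma \ref{l10} is applicable. Fix $n\in M_1$. By the recursive construction of maximal Schreier sets and \eqref{re50}, $\bigcup_{k=1}^n A(\alpha, n, k) = A(\alpha+1, n, 1) = [n, s_{(\alpha+1, n)}(1)-1] \in \Max(\mathcal{S}_{\alpha+1})$. Therefore the inner expressions only see the restriction $E_n := E \cap [n, s_{(\alpha+1, n)}(1)-1]$, which lies in $\mathcal{S}_{\alpha+1}$ by heredity. Since $|E_n| \le |E|$, monotonicity in $|E|$ gives that the least $m'$ with $[m', m'+|E_n|-1] \in \mathcal{S}_{\alpha+1}$ satisfies $m' \le m$, whence Lemma \ref{l10} yields $t_\alpha(E_n) \le m$.

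For $\|1_E\|_3^2$ write $\alpha_k := \sum_{i\in A(\alpha,n,k)\cap E} x_{(\alpha,A(\alpha,n,k))}(i) \in [0,1]$, so the inner sum is $\sum_{k=1}^n (\psi(k)-\psi(k-1))\alpha_k$. Decompose $E_n = \bigcup_{\ell=1}^{t_\alpha(E_n)+1} F_\ell$ with $F_\ell \in \mathcal{S}_\alpha$ and apply Lemma \ref{boundby6} with $F = F_\ell$ and the maximal sets $A(\alpha, n, k)$: for each $\ell$, $\sum_{k=1}^n \sum_{i\in F_\ell} x_{(\alpha,A(\alpha,n,k))}(i) \le 6$. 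Summing over $\ell$ gives $\sum_{k=1}^n \alpha_k \le 6(t_\alpha(E_n)+1) \le 6(m+1)$.

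The increments $w_k := \psi(k)-\psi(k-1)$ are non-negative and non-increasing (concavity of $\psi$), and $\alpha_k \in [0,1]$. A standard rearrangement argument combined with the concavity interpolation
$$\psi(T)\ \ge\ \psi(\lfloor T\rfloor) + (T-\lfloor T\rfloor)\big(\psi(\lfloor T\rfloor+1)-\psi(\lfloor T\rfloor)\big)$$
yields $\sum_k w_k \alpha_k \le \psi\big(\sum_k \alpha_k\big) \le \psi(6(m+1))$. Since $\psi$ is concave with $\psi(0)=0$, the map $\psi(x)/x$ is non-increasing, so $\psi(cx) \le c\psi(x)$ for $c \ge 1$; hence $\psi(6(m+1)) \le 6\psi(m+1) = 6\phi^2(m+1)$. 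Taking square roots and the maximum over $n \in M_1$ gives \eqref{e6}. The same argument, now with $\phi$ (also concave, vanishing at $0$) replacing $\psi$ and with the truncated weights $\beta_k := \sum_{i\in A(\alpha,n,k)\cap E,\, i\le i_0} x_{(\alpha,A(\alpha,n,k))}(i) \le \alpha_k$, produces $\sum_k (\phi(k)-\phi(k-1))\beta_k \le \phi(6(m+1)) \le 6\phi(m+1)$, uniformly in $i_0$, giving \eqref{e7}.

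The only real subtlety is the opening reduction: although $E$ itself need not lie in $\mathcal{S}_{\alpha+1}$, the outer maximum confines us to the window $A(\alpha+1,n,1)$, so only $E_n \in \mathcal{S}_{\alpha+1}$ enters, and Lemma \ref{l10} applies. Once this observation is in hand, the rest is routine: Lemma \ref{boundby6}, the rearrangement-and-concavity bound on $\sum w_k \alpha_k$, and subadditivity of the concave gauges $\psi$ and $\phi$.
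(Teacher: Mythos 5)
Your proof is correct and reaches the same constants via the same core ingredients (Lemma \ref{boundby6}, Lemma \ref{l10}, concavity of $\psi$ and $\phi$), but it differs from the paper's argument in two worthwhile ways. First, the opening reduction to $E_n = E\cap[n, s_{(\alpha+1,n)}(1)-1]$, which lies in $\mathcal{S}_{\alpha+1}$ by heredity of $\Max(\mathcal{S}_{\alpha+1})\ni A(\alpha+1,n,1)$, is not in the paper: the paper works with $E$ itself and invokes Lemma \ref{l10} to get $\psi(t_\alpha(E)+1)\le\psi(m+1)$, even though Lemma \ref{l10} is stated only for $E\in\mathcal{S}_{\alpha+1}$ while Lemma \ref{l3} allows arbitrary finite $E$; in fact $t_\alpha(E)\le m$ can fail for general $E$ (e.g.\ $\alpha=1$, $E=\{1,\dots,15\}$ has $t_1(E)=4$ but $m=3$), so your windowing step, which you rightly flag as the key subtlety, legitimizes the appeal to Lemma \ref{l10} exactly where the paper is loose (the paper's applications only ever need $E\in\mathcal{S}_{\alpha+1}$, so nothing downstream is affected, but your version proves the lemma as stated). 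Second, where the paper splits the sum at $k=t_\alpha(E)+1$ and compares term by term using the monotonicity of the increments (after a separate case $n\le m$), you use the cleaner majorization bound $\sum_k w_k\alpha_k\le\psi\bigl(\sum_k\alpha_k\bigr)$ for non-increasing increments $w_k$ and $\alpha_k\in[0,1]$, followed by $\psi(6(m+1))\le 6\psi(m+1)$ from $\psi(x)/x$ decreasing; this removes the case distinction and handles $\|\cdot\|_4$ by the identical argument with $\phi$ and the truncated weights. Both proofs share the unproved (standard) fact that a finite set decomposes into $t_\alpha+1$ sets of $\mathcal{S}_\alpha$, so you are on the same footing as the paper there.
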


\begin{proof}
We shall prove \eqref{e6} only since the same proof applies to \eqref{e7}. 
Let $n\in M_1$ and 
$$T(n, E) \ :=\ \sum_{k=1}^{n}(\psi(k)-\psi(k-1))\sum_{i\in A(\alpha, n, k)\cap E} x_{(\alpha, A(\alpha, n, k))}(i).$$
If $n\leq m$, then 
$$T(n, E)\ \leq\ \psi(m) \ =\ \phi^2(m).$$
Suppose that $n > m$. Write $E = \bigcup_{i=1}^{t_\alpha(E)+1} A_i$ for $A_i\in \mathcal{S}_\alpha$.
By Lemma \ref{boundby6}, 
\begin{equation}\label{re53}\sum_{k=1}^{n}\sum_{i\in A(\alpha, n, k)\cap A_j} x_{(\alpha, A(\alpha, n, k))}(i)\ \le\ 6, \mbox{ with } 1\le j\le t_\alpha(E)+1.\end{equation}
Therefore, if we let 
\begin{equation*}a_k\ :=\ \sum_{i\in A(\alpha, n, k)\cap E} x_{(\alpha, A(\alpha, n, k))}(i)\ \leq\ 1, \mbox{ for }1\leq k\leq n,\end{equation*}
then it follows from \eqref{re53} that 
\begin{equation}\label{e5'}\sum_{k=1}^n a_k\ =\ \sum_{j=1}^{t_\alpha(E)+1}\sum_{k=1}^{n}\sum_{i\in A(\alpha, n, k)\cap A_j} x_{(\alpha, A(\alpha, n, k))}(i)\ \le\ 6(t_\alpha(E)+1).\end{equation}
Due to decreasing $\psi(k)-\psi(k-1)$ for $1\le k\le n$, we have
\begin{align*}
&\sum_{k=t_\alpha(E)+2}^n (\psi(k)-\psi(k-1))a_k\nonumber\\
&\ \le\ \left(\psi(t_\alpha(E)+1)-\psi(t_\alpha(E))\right)\sum_{k=t_\alpha(E)+2}^n a_k\nonumber\\
&\ \le\ \left(\psi(t_\alpha(E)+1)-\psi(t_\alpha(E))\right)\left(6(t_\alpha(E)+1)-\sum_{k=1}^{t_\alpha(E)+1} a_k\right)\mbox{ \quad(by \eqref{e5'})}\nonumber\\
&\ =\ \left(\psi(t_\alpha(E)+1)-\psi(t_\alpha(E))\right)\sum_{k=1}^{t_\alpha(E)+1} (6-a_k)\nonumber \\
&\ \le\ \sum_{k=1}^{t_\alpha(E)+1} (\psi(k)-\psi(k-1))(6-a_k) \mbox{ \quad(due to decreasing $\psi(k)-\psi(k-1)$)}.
\end{align*}
Therefore, 
\begin{align*}T(n, E)&\ =\ \sum_{k=1}^{n}(\psi(k)-\psi(k-1))a_k\\
&\ =\  \sum_{k=1}^{t_\alpha(E)+1}(\psi(k)-\psi(k-1))a_k + \sum_{k=t_\alpha(E) + 2}^{n}(\psi(k)-\psi(k-1))a_k\\
&\ \le\ \sum_{k=1}^{t_\alpha(E)+1}(\psi(k)-\psi(k-1))a_k + \sum_{k=1}^{t_\alpha(E)+1} (\psi(k)-\psi(k-1))(6-a_k)\\
&\ =\ 6\sum_{k=1}^{t_\alpha(E)+1}(\psi(k)-\psi(k-1))\\
&\ =\ 6\psi(t_\alpha(E)+1)\ \le\ 6\psi(m+1) \quad\mbox{ (by Lemma \ref{l10})}.
\end{align*}
This completes our proof. 
\end{proof}

\begin{prop}
The basis $(e_i)_i$ is $\mathcal{S}_{\alpha+1}$-democratic. 
\end{prop}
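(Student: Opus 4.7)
The plan is to combine the four bounds established in Lemmas \ref{l1}, \ref{l2}, \ref{l3} and Proposition \ref{rp50}, so that the democracy constant falls out directly from the fact that all estimates are expressed through the single quantity $\phi(m+1)$, where $m$ is the least positive integer with $[m,m+|E|-1]\in\mathcal{S}_{\alpha+1}$.

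Fix $A\in\mathcal{S}_{\alpha+1}$ and $B\in[\mathbb{N}]^{<\infty}$ with $|A|\le|B|$. Let $m_A$ (resp.\ $m_B$) denote the least positive integer such that $[m_A,m_A+|A|-1]\in\mathcal{S}_{\alpha+1}$ (resp.\ $[m_B,m_B+|B|-1]\in\mathcal{S}_{\alpha+1}$). Since $A\in\mathcal{S}_{\alpha+1}$, Lemmas \ref{l1}, \ref{l2}, and \ref{l3} are all applicable and yield
\[
\|1_A\|_i\ \le\ 6\,\phi(m_A+1),\quad i=1,2,3,4,
\]
so $\|1_A\|\le 6\,\phi(m_A+1)$. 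On the other hand, Proposition \ref{rp50} applies to the arbitrary nonempty set $B$ and gives
\[
\|1_B\|\ \ge\ \|1_B\|_1\ \ge\ \tfrac{1}{6}\phi(m_B+1).
\]

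It remains to compare $m_A$ and $m_B$. Because $|A|\le|B|$, the interval $[m_B,m_B+|A|-1]$ is a subset of $[m_B,m_B+|B|-1]\in\mathcal{S}_{\alpha+1}$, and the hereditary property of $\mathcal{S}_{\alpha+1}$ forces $[m_B,m_B+|A|-1]\in\mathcal{S}_{\alpha+1}$. By minimality of $m_A$, we conclude $m_A\le m_B$. Monotonicity of $\phi$ (Property a)) then yields $\phi(m_A+1)\le\phi(m_B+1)$, and combining the three displayed estimates gives
\[
\|1_A\|\ \le\ 6\,\phi(m_A+1)\ \le\ 6\,\phi(m_B+1)\ \le\ 36\,\|1_B\|.
\]
Thus the basis $(e_i)_i$ is $\mathcal{S}_{\alpha+1}$-democratic with constant at most $36$.

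There is no real obstacle here; the work has been front-loaded into the individual upper and lower bounds. The only nontrivial step is the comparison $m_A\le m_B$, which is a short hereditary-property argument; everything else is assembling already-proven inequalities.
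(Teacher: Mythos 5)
Your proof is correct and follows essentially the same route as the paper: combine the upper bounds of Lemmas \ref{l1}, \ref{l2}, \ref{l3} with the lower bound of Proposition \ref{rp50} and compare the two minimal integers. Your hereditary-property justification of $m_A\le m_B$ simply spells out a step the paper states without comment, so the arguments coincide.
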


\begin{proof}
Let $A\in \mathcal{S}_{\alpha+1}$ and $B\in [\mathbb{N}]^{<\infty}$ with $|A|\leq |B|$. Let $m_1$ be the smallest positive integer such that $[m_1, m_1+|A|-1]\in \mathcal{S}_{\alpha+1}$. 
It follows from Lemmas \ref{l1}, \ref{l2}, and \ref{l3} that 
$$\|1_A\|\ \leq\ 6\phi(m_1+1).$$
By Proposition \ref{rp50}, 
$$\|1_B\|\ \geq\ \frac{1}{6}\phi(m_2+1),$$
where $m_2$ is the smallest positive integer such that $[m_2, m_2+|B|-1]\in \mathcal{S}_{\alpha+1}$. Since $|B|\geq |A|$, we know that $m_2\geq m_1$, so $\|1_A\|\leq 36\|1_B\|$. This shows that $(e_i)_i$ is $\mathcal{S}_{\alpha+1}$-democratic. 
\end{proof}

\begin{prop}
The basis $(e_i)_i$ is not $\mathcal{S}_{\alpha+2}$-democratic.
\end{prop}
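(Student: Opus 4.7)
To show the basis is not $\mathcal{S}_{\alpha+2}$-democratic, the plan is, for each $n\in M_2$, to exhibit a set $A_n\in\mathcal{S}_{\alpha+2}$ together with a comparison set $B_n$ satisfying $|B_n|=|A_n|$, such that the ratio $\|1_{A_n}\|/\|1_{B_n}\|$ grows without bound. The natural candidate is
\[
A_n\ :=\ \bigcup_{k=1}^{n} A(\alpha+1,n,k)\ =\ [n,\,s_{(\alpha+1,n)}(n)-1],
\]
which lies in $\mathcal{S}_{\alpha+2}$ since it is the union of $n$ maximal $\mathcal{S}_{\alpha+1}$-sets with $n\le \min A_n$. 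Let $m_n$ be the least positive integer for which $[m_n,m_n+|A_n|-1]\in\mathcal{S}_{\alpha+1}$, and set $B_n=[m_n,m_n+|A_n|-1]$, which lies in $\mathcal{S}_{\alpha+1}$ by definition.

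The upper bound on $\|1_{B_n}\|$ follows immediately from earlier work: since $B_n\in\mathcal{S}_{\alpha+1}$, Lemmas \ref{l1}, \ref{l2}, and \ref{l3} together give $\|1_{B_n}\|\le 6\phi(m_n+1)$. For $\|1_{A_n}\|$ the plan is to exploit $\|\cdot\|_2$, which is the only component of the norm that is not controlled by $\phi(m+1)$ for sets lying outside $\mathcal{S}_{\alpha+1}$. Taking the $m=n$ term in the definition of $\|\cdot\|_2$ and using that each $x_{(\alpha+1,A(\alpha+1,n,k))}$ is a probability vector supported on $A(\alpha+1,n,k)\subset A_n$, I obtain
\[
\|1_{A_n}\|\ \ge\ \|1_{A_n}\|_2\ \ge\ \sum_{k=1}^{n}\phi\bigl(s_{(\alpha+1,n)}(k-1)\bigr).
\]
Since $n\in M_2$ and each $s_{(\alpha+1,n)}(k-1)$ lies in $[n,s_{(\alpha+1,n)}(n)]=[n,s_{(\alpha+2,n)}(1)]$ (by \eqref{re50} applied at $\alpha+1$), property f) gives $\phi(s_{(\alpha+1,n)}(k-1))\ge\theta_{(\alpha+1,n)}(s_{(\alpha+1,n)}(k-1))=\log n+k-1$, whence $\|1_{A_n}\|\ge n\log n+\tfrac{n(n-1)}{2}$.

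The key step is to show that $\phi(m_n+1)$ grows only linearly in $n$, and here the choice $n\in M_2$ is essential. Two elementary bounds pin down the location of $m_n$. On the one hand, $|A(\alpha+1,n,1)|=s_{(\alpha+1,n)}(1)-n<s_{(\alpha+1,n)}(n)-n=|A_n|$, so $m=n$ fails the defining condition and $m_n\ge n+1$. On the other hand, $[|A_n|,2|A_n|-1]\in\mathcal{S}_1\subset\mathcal{S}_{\alpha+1}$ gives $m_n\le|A_n|$, hence $m_n+1\le s_{(\alpha+1,n)}(n)=s_{(\alpha+2,n)}(1)$. Thus $m_n+1\in[n,s_{(\alpha+2,n)}(1)]$, and property f) yields $\phi(m_n+1)\le\sqrt{2}\,\theta_{(\alpha+1,n)}(s_{(\alpha+1,n)}(n))=\sqrt{2}(\log n+n)$. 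Combining,
\[
\frac{\|1_{A_n}\|}{\|1_{B_n}\|}\ \ge\ \frac{n\log n+n(n-1)/2}{6\sqrt{2}(\log n+n)}\ \longrightarrow\ \infty,
\]
contradicting $\mathcal{S}_{\alpha+2}$-democracy. The main subtlety I anticipate is the pinning down of $m_n+1$ inside the interval $[n,s_{(\alpha+2,n)}(1)]$; without that step, one would only have the generic estimate $\phi(m_n+1)\le\sqrt[4]{m_n+1}$ from property e), which is far too weak since $|A_n|$ can be much larger than $n^8$ as $\alpha$ varies.
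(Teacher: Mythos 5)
Your argument is correct and follows the paper's overall skeleton: the same witness set $A_n=[n,s_{(\alpha+2,n)}(1)-1]$ for $n\in M_2$, the same lower bound $\|1_{A_n}\|\ge\|1_{A_n}\|_2\ge n\log n+\tfrac{n(n-1)}{2}$ via property f), and the same upper bound $\|1_{B}\|\le 6\phi(m'+1)$ coming from Lemmas \ref{l1}, \ref{l2}, and \ref{l3}. Where you genuinely diverge is in controlling the position parameter $m'$: the paper proves $m'\le\sum_{k=1}^m s_{(\alpha+1,m)}(k-1)$ by explicitly decomposing $[d,d+|A|-1]$ into $\sum_k s_{(\alpha+1,m)}(k-1)$ sets of $\mathcal{S}_\alpha$, and then still needs the geometric estimate $\sum_k s_{(\alpha+1,m)}(k-1)\le 3s_{(\alpha+1,m)}(m-1)$ and the concavity bound $\phi(3s)\le 3\phi(s)$ before property f) applies; you instead note that $[|A_n|,2|A_n|-1]\in\mathcal{S}_1\subset\mathcal{S}_{\alpha+1}$, so $m_n\le|A_n|<s_{(\alpha+2,n)}(1)$ and $m_n+1$ lands directly in the interval where f) holds. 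Your route is shorter and avoids the decomposition and the subadditivity step entirely, at the mild cost of invoking the containment $\mathcal{S}_1\subset\mathcal{S}_{\alpha+1}$, which the paper itself uses freely (e.g., in Proposition \ref{ppp0} and Lemma \ref{al1}), so this is consistent with its conventions; fixing the single comparison set $B_n=[m_n,m_n+|A_n|-1]$ rather than an arbitrary $B\in\mathcal{S}_{\alpha+1}$ of the same cardinality is of course enough to refute democracy. Two small points worth making explicit: your claim $m_n\ge n+1$ uses, besides the maximality of $A(\alpha+1,n,1)$ in $\mathcal{S}_{\alpha+1}$, the spreading property (if $[m,m+|A_n|-1]\in\mathcal{S}_{\alpha+1}$ for some $m<n$, then also for $m=n$); and this lower bound is only needed to place $m_n+1$ in $[n,s_{(\alpha+2,n)}(1)]$, which could alternatively be bypassed by the monotonicity of $\phi$ together with f) evaluated at $x=n$.
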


\begin{proof}
Choose $m\in M_2$ and let $A = [m, s_{(\alpha+2, m)}(1)-1] = \bigcup_{k=1}^{m}A(\alpha+1, m, k)$. Observe that for all $k\in [1,m]$, 
$$m\ \leq\ s_{(\alpha+1, m)}(k-1) \ <\ s_{(\alpha+2, m)}(1).$$
Hence,
\begin{align}\label{e15}\|1_A\|_2&\ =\ \sum_{k=1}^m \phi(s_{(\alpha+1, m)}(k-1))\nonumber\\
&\ \geq\ \sum_{k=1}^m \theta_{(\alpha+1, m)}(s_{(\alpha+1, m)}(k-1))\nonumber\\
&\ =\ \sum_{k=1}^m (\log m + (k-1))\ =\ m\log m + \frac{m(m-1)}{2}.
\end{align}
On the other hand, given a set $B\in \mathcal{S}_{\alpha+1}$ with $|B| = |A|$, it follows from Lemmas \ref{l1}, \ref{l2}, and \ref{l3} that 
\begin{equation}\label{e10}\|1_B\|\ \leq\ 6\phi(m'+1),\end{equation}
where $m'$ is the smallest positive integer such that $[m', m'+|A|-1]\in \mathcal{S}_{\alpha+1}$. 

Let $d\geq \sum_{k=1}^m s_{(\alpha+1, m)}(k-1)$ and write $[d, d+|A|-1]$ as 
$$\bigcup_{k=1}^m \bigcup_{u=1}^{s_{(\alpha+1, m)}(k-1)} \underbrace{\left(d+\sum_{j=1}^{k-1}|A(\alpha+1, m, j)| + \left[\sum_{v = 1}^{u-1}|G^{(k)}_v|,\sum_{v = 1}^{u}|G^{(k)}_v|-1\right]\right)}_{=: A_{k,u}},$$
where 
$$A(\alpha+1, m, k) \ =\ \bigcup_{u=1}^{s_{(\alpha+1, m)}(k-1)}G^{(k)}_u,\mbox{ for }G^{(k)}_u\in \Max(\mathcal{S}_\alpha).$$
Since, for $1\le u\le s_{(\alpha+1, m)}(k-1)$,
$$\min G^{(k)}_u \ =\ s_{(\alpha+1, m)}(k-1) + \sum_{v=1}^{u-1}|G^{(k)}_v|\ \leq\ \min A_{k,u}\mbox{ and }|A_{k,u}| = |G^{(k)}_u|,$$
we know that $A_{k,u}\in \mathcal{S}_\alpha$. It follows that 
$[d, d+|A|-1]$ is the union of $\sum_{k=1}^m s_{(\alpha+1, m)}(k-1)$ sets in $\mathcal{S}_\alpha$; therefore, $d\geq \sum_{k=1}^m s_{(\alpha+1, m)}(k-1)$ implies that $[d, d+|A|-1]\in \mathcal{S}_{\alpha+1}$. The minimality of $m'$ implies that 
\begin{equation}\label{e11}m'\ \leq\ \sum_{k=1}^m s_{(\alpha+1, m)}(k-1).\end{equation}

From \eqref{e10} and \eqref{e11}, we have
\begin{align}\label{e16}
\|1_B\|&\ \leq\ 6\phi\left(\sum_{k=1}^m s_{(\alpha+1, m)}(k-1)+1\right)\ \leq\ 6\phi\left(\sum_{k=1}^m \frac{s_{(\alpha+1, m)}(m-1)}{2^{m-k}}+1\right)\nonumber\\
&\ \leq\ 6\phi(3s_{(\alpha+1, m)}(m-1))\nonumber\\
&\ \leq\ 18\phi(s_{(\alpha+1, m)}(m-1)) \ \leq\ 18\sqrt{2}(\log m + m-1).
\end{align}

We deduce from \eqref{e15} and \eqref{e16} that $\|1_A\|/\|1_B\|\rightarrow\infty$ as $m\rightarrow \infty$, so $(e_i)_i$ is not $\mathcal{S}_{\alpha+2}$-democratic. 
\end{proof}

\subsection{$\boldsymbol{\mathcal{S}_{\alpha}}$-unconditional but not $\boldsymbol{\mathcal{S}_{\alpha+1}}$-unconditional}

\begin{prop}
The basis $(e_i)_i$ is $\mathcal{S}_\alpha$-unconditional. 
\end{prop}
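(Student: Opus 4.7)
The plan is to bound $\|P_A x\|_i$ by a constant times $\|x\|$ for each of the four component norms, for any $A\in\mathcal{S}_\alpha$ and $x\in c_{00}$. For $i\in\{1,2,3\}$ this is immediate and requires nothing about $A$: the defining expressions for $\|\cdot\|_1$, $\|\cdot\|_2$, $\|\cdot\|_3$ depend on $x$ only through $|x_j|$ (respectively $x_j^2$ for $i=3$), while every other factor---the repeated-average weights $x_{(\alpha,A_j)}(\cdot)$, the coefficients $\phi(s)/s$, $\phi(s_{(\alpha+1,m)}(k-1))$, and $\psi(k)-\psi(k-1)$---is nonnegative. Replacing $x$ by $P_A x$ simply zeros out the entries outside $A$, so each defining supremum can only decrease, yielding $\|P_A x\|_i\le\|x\|_i\le\|x\|$.

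The hard part---and the only place where the hypothesis $A\in\mathcal{S}_\alpha$ is actually used---is $\|\cdot\|_4$, because its defining expression contains $x_i$ without absolute value, so sign structure matters. My plan is to dispose of the signs via the triangle inequality and then invoke Lemma \ref{boundby6}. Fix $m\in M_1$ and $i_0\in\mathbb{N}$, and let $a_k:=\phi(k)-\phi(k-1)$. Concavity of $\phi$ with $\phi(0)=0$ and $\phi(1)=1$ gives $0\le a_k\le a_1=1$. Estimating everything in absolute value and using $a_k\le 1$,
\begin{align*}
\left|\sum_{k=1}^m a_k\!\!\!\sum_{\substack{i\in A(\alpha,m,k)\cap A\\ i\le i_0}}\!\!\!x_{(\alpha,A(\alpha,m,k))}(i)\,x_i\right|
&\le \|x\|_\infty\sum_{k=1}^m\sum_{i\in A(\alpha,m,k)\cap A}\!\!\!x_{(\alpha,A(\alpha,m,k))}(i)\\
&\le 6\|x\|_\infty,
\end{align*}
where the last inequality applies Lemma \ref{boundby6} with $F=A\in\mathcal{S}_\alpha$ and the maximal $\mathcal{S}_\alpha$-sets $A(\alpha,m,1)<\cdots<A(\alpha,m,m)$. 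Since $\|x\|_\infty\le\|x\|_1\le\|x\|$ (recorded right after the norm is defined), taking maxima over $m\in M_1$ and $i_0$ yields $\|P_A x\|_4\le 6\|x\|$ and hence $\|P_A x\|\le 6\|x\|$.

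The real content is thus Lemma \ref{boundby6}: it is exactly the mechanism that converts the $\mathcal{S}_\alpha$-hypothesis into a universal constant (the $6$), and without it the sum $\sum_{i\in A}\sum_k x_{(\alpha,A(\alpha,m,k))}(i)$ could be as large as $m$ and blow up as $m\to\infty$ through $M_1$. I expect this blow-up is precisely what the next subsection exploits to show that $(e_i)$ fails to be $\mathcal{S}_{\alpha+1}$-unconditional, consistent with the construction's intended dichotomy. No more delicate technique (Abel summation, decomposing $A$ into intervals adapted to the partition $(A(\alpha,m,k))_k$, etc.) should be required.
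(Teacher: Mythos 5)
Your proposal is correct and follows essentially the same route as the paper: the first three semi-norms are unconditional so only $\|\cdot\|_4$ needs work, and there you bound the increments $\phi(k)-\phi(k-1)$ by $1$, pull out $\max_i|x_i|\le\|x\|_1\le\|x\|$, and apply Lemma \ref{boundby6} with $F=A\in\mathcal{S}_\alpha$ and the maximal sets $A(\alpha,m,k)$ to get the constant $6$ — which is exactly the paper's argument.
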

\begin{proof}
Let $x = \sum_{i}x_ie_i$ with $\|x\| = 1$. Due to $\|\cdot\|_1$, $|x_i|\leq 1$ for all $i\in \mathbb{N}$. Pick $E\in \mathcal{S}_\alpha$. It suffices to show that for every $m\in M_1$ and $i_0\ge 1$, 
$$\left|\sum_{k=1}^m(\phi(k)-\phi(k-1))\sum_{\substack{i\in A(\alpha, m, k)\cap E\\ i\le i_0}}x_{(\alpha, A(\alpha, m, k))}(i)x_i\right| \ \le\ 6.$$
Indeed, by Lemma \ref{boundby6},
$$\sum_{k=1}^m\left|\sum_{\substack{i\in A(\alpha, m, k)\cap E\\ i\le i_0}}x_{(\alpha, A(\alpha, m, k))}(i)x_i\right|\ \le\ \sum_{k=1}^m\sum_{\substack{i\in A(\alpha, m, k)\cap E}}x_{(\alpha, A(\alpha, m, k))}(i)\ \le\ 6.$$
It follows from decreasing $\phi(k)-\phi(k-1)$ that
\begin{align*}&\left|\sum_{k=1}^m (\phi(k)-\phi(k-1))\sum_{\substack{i\in A(\alpha, m, k)\cap E\\ i\le i_0}}x_{(\alpha, A(\alpha, m, k))}(i)x_i\right|\\
&\ \le\ \sum_{k=1}^m (\phi(1)-\phi(0))\left|\sum_{\substack{i\in A(\alpha, m, k)\cap E\\ i\le i_0}}x_{(\alpha, A(\alpha, m, k))}(i)x_i\right|\\
&\ =\ \sum_{k=1}^m\left|\sum_{\substack{i\in A(\alpha, m, k)\cap E\\ i\le i_0}}x_{(\alpha, A(\alpha, m, k))}(i)x_i\right|\ \le\ 6.
\end{align*}
This completes our proof. 
\end{proof}

For the next step, we need the following lemma.
\begin{lem}\label{bounduncon}
For each integer $m\in M_1$ and $q\in [1, m]$, it holds that
$$\frac{\phi(q)}{q}\sum_{i=1}^q\frac{1}{\phi(i)}\ \le\ \log^{1/4} m +3.$$
\end{lem}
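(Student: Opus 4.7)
The plan is to split the sum at the threshold $\log m$, exploiting property~(e), which pins $\phi(x) = \sqrt[4]{x}$ on $[\log m, m]$ for $m \in M_1$, together with the trivial lower bound $\phi(i) \ge \phi(1) = 1$ for $i \in \mathbb{N}$ (valid since $\phi = \sqrt{\psi}$ is increasing with $\psi(1) = 1$).

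First I would dispose of the easy case $q \le \log m$. Using $\phi(i) \ge 1$ gives $\sum_{i=1}^q 1/\phi(i) \le q$, so the left-hand side of the desired inequality collapses to $\phi(q)$, which by \eqref{re31} is at most $q^{1/4} \le (\log m)^{1/4}$.

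For the main case $\log m < q \le m$, I would set $L := \lceil \log m \rceil$ and split
\[
\sum_{i=1}^q \frac{1}{\phi(i)} \;=\; \sum_{i=1}^{L-1} \frac{1}{\phi(i)} \;+\; \sum_{i=L}^q \frac{1}{\phi(i)}.
\]
The first piece is at most $L - 1 \le \log m$ by $\phi(i) \ge 1$. In the second piece every index lies in $[\log m, m]$, so \eqref{re32} yields $\phi(i) = i^{1/4}$, and a standard integral comparison bounds the piece by $\int_0^q x^{-1/4}\, dx = \tfrac{4}{3} q^{3/4}$. Since $\phi(q) = q^{1/4}$ as well, the target expression is at most
\[
\frac{q^{1/4}}{q}\left(\log m + \tfrac{4}{3} q^{3/4}\right) \;=\; \frac{\log m}{q^{3/4}} + \frac{4}{3}.
\]
The hypothesis $q > \log m$ then forces $\log m / q^{3/4} < (\log m)^{1/4}$, so the total is at most $\log^{1/4} m + 3$, as required.

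There is no real obstacle here; the only points of care are selecting the threshold so that property~(e) applies to the ``large $i$'' subsum, and noticing that the prefactor $\phi(q)/q = q^{-3/4}$ is precisely strong enough to absorb the $\log m$ produced by the short ``small $i$'' subsum down to $(\log m)^{1/4}$.
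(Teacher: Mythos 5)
Your proof is correct and follows essentially the same route as the paper: split the sum at $\lceil\log m\rceil$, bound the small-index part by $\phi(i)\ge 1$, use \eqref{re32} and an integral comparison on the part with $i\in[\log m, q]$, and absorb the resulting $\log m$ via the prefactor $\phi(q)/q=q^{-3/4}$ together with $q>\log m$. The only difference is bookkeeping (you cut the first block at $L-1$ rather than $L$, getting $\log m$ in place of the paper's $\log m+1$), which does not change the argument.
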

\begin{proof}
If $q\le \log m$, then 
$$\frac{\phi(q)}{q}\sum_{i=1}^q\frac{1}{\phi(i)}\ \leq\ \phi(q)\ \leq\ \log^{1/4} m.$$
If $q > \log m$, we have, by \eqref{re31} and \eqref{re32}, 
\begin{align*}
\frac{\phi(q)}{q}\sum_{i=1}^q \frac{1}{\phi(i)}&\ =\ \frac{\phi(q)}{q}\left(\sum_{i = 1}^{\lceil \log m\rceil}\frac{1}{\phi(i)} + \sum_{i=\lceil \log m\rceil+1}^q \frac{1}{\phi(i)}\right)\\
&\ \leq\ \frac{1}{q^{3/4}}\left(\log m + 1 + \int_{\log m}^q\frac{dx}{x^{1/4}}\right)\\
&\ \leq\ \frac{1}{q^{3/4}}\left(\log m + 1 + \frac{4}{3}q^{3/4}\right)\\
&\ \leq\ \log^{1/4} m + 3,
\end{align*}
as desired. 
\end{proof}

\begin{prop}\label{whyS1}
The basis $(e_i)_i$ is not $\mathcal{S}_{\alpha+1}$-unconditional.
\end{prop}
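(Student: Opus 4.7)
\emph{Proof plan.} The plan is to exploit the $\|\cdot\|_4$ norm, whose partial-sum structure (the max over $i_0$) makes it sensitive to cancellation in the way a Haar/square-function norm is. Fix a large $m \in M_1$ and define the test vector
\[
x_m \ :=\ \sum_{k=1}^m \frac{(-1)^k}{\phi(k)}\, 1_{A(\alpha, m, k)},
\]
whose support $E_m := \bigcup_{k=1}^m A(\alpha, m, k)$ equals $A(\alpha+1, m, 1)$ by \eqref{re50} and hence lies in $\mathcal{S}_{\alpha+1}$. Let $F_m := \bigcup_{k \text{ even}} A(\alpha, m, k)$; by heredity, $F_m \in \mathcal{S}_{\alpha+1}$. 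The aim is to show that $\|x_m\| = O(\sqrt{\log m})$ while $\|P_{F_m} x_m\|_4 \gtrsim \log m$, forcing the $\mathcal{S}_{\alpha+1}$-unconditionality constant to blow up as $m \to \infty$ through $M_1$.

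For the lower bound, choosing $m' = m$ and $i_0 = \infty$ in the definition of $\|\cdot\|_4$ and using $\sum_{i \in A(\alpha, m, k)} x_{(\alpha, A(\alpha, m, k))}(i) = 1$ from (P1) gives
\[
\|P_{F_m} x_m\|_4 \ \ge\ \sum_{\substack{k \text{ even}\\ 1 \le k \le m}} \frac{\phi(k)-\phi(k-1)}{\phi(k)} \ \gtrsim\ \log m,
\]
where the last estimate follows from $\phi(x) = x^{1/4}$ on $[\log m, m]$ by \eqref{re32}. For the upper bound, I would estimate each component norm separately. For $\|x_m\|_4$ at $m' = m$, the inner expression reduces to $\sum_k (-1)^k (\phi(k)-\phi(k-1))/\phi(k)\,\sigma_k(i_0)$ with $\sigma_k(i_0) \in [0, 1]$ and the weights positive and decreasing (by concavity of $\phi$), giving an $O(1)$ alternating-series bound; for $m' \ne m$, Lemma \ref{boundby6} handles the block overlaps. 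For $\|x_m\|_3$, since $x_i^2 = 1/\psi(k)$ on $A(\alpha, m, k)$, direct calculation gives $\|x_m\|_3^2 \le \sum_{k=1}^m (\psi(k)-\psi(k-1))/\psi(k) = O(\log m)$, hence $\|x_m\|_3 = O(\sqrt{\log m})$.

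The crux is bounding $\|x_m\|_1$. For any admissible triple $(s, (A'_j)_{j=1}^s, \pi)$ in the supremum defining $\|\cdot\|_1$, assume $\pi$ maps into $\operatorname{supp}(x_m)$ and set $J := \pi(\bigcup_j A'_j) \in \mathcal{S}_1$. For $y \in J$, let $\ell(y)$ be the block index with $y \in A(\alpha, m, \ell(y))$, so that $\ell$ is nondecreasing on $J$ and $|x_y| = 1/\phi(\ell(y))$. Grouping by block, Lemma \ref{boundby6} controls the per-block contributions and, together with $|J| \le \min J$, reduces the estimate to an expression of the form $(\phi(q)/q) \sum_{r=1}^q 1/\phi(r)$ with $q \le m$; Lemma \ref{bounduncon} then bounds this by $\log^{1/4} m + 3$. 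A parallel argument, now interacting $m'' \in M_2$ with the $\alpha+1$-block structure of the support and applying Lemma \ref{boundby6} at level $\alpha+1$, yields the same slow-growing bound for $\|x_m\|_2$. Combining, $\|x_m\| = O(\sqrt{\log m})$, so $\|P_{F_m} x_m\|/\|x_m\| \gtrsim \sqrt{\log m} \to \infty$, contradicting $\mathcal{S}_{\alpha+1}$-unconditionality. The main obstacle is the $\|\cdot\|_1$ bound: the flexibility of $\pi$ (strictly increasing with $\mathcal{S}_1$-image) forces a careful combinatorial reduction to bring the sum into the precise form handled by Lemma \ref{bounduncon}.
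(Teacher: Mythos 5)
Your test vector is a close cousin of the paper's: since $|x_m|$ has exactly the coefficient profile $1/\phi(k)$ on $A(\alpha,m,k)$, the unconditional seminorms $\|\cdot\|_1,\|\cdot\|_2,\|\cdot\|_3$ of $x_m$ coincide with those of the paper's vectors, and your lower bound $\|P_{F_m}x_m\|_4\gtrsim\log m$ and the bound $\|x_m\|_3=O(\sqrt{\log m})$ are correct. Two of your justifications are misplaced, though easily repaired: for $\|x_m\|_4$ with $m'\neq m$, and for $\|x_m\|_2$, the right reason is not Lemma \ref{boundby6} but the separation \eqref{e17}, which makes the windows $[m',s_{(\alpha+1,m')}(1)-1]$ ($m'\in M_1$, $m'\neq m$) and $[n,s_{(\alpha+2,n)}(1)-1]$ ($n\in M_2$) disjoint from $\supp x_m$, so those contributions are simply $0$; your proposed ``parallel argument at level $\alpha+1$'' for $\|\cdot\|_2$ would not obviously work, because the weights $\phi(s_{(\alpha+1,n)}(k-1))$ are large and Lemma \ref{boundby6} alone does not tame them.

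The genuine gap is exactly at the step you flag as the crux, the bound on $\|x_m\|_1$ when $\alpha\ge 1$. Your claimed reduction to an expression of the form $(\phi(q)/q)\sum_{r=1}^q 1/\phi(r)$, handled by Lemma \ref{bounduncon}, is the mechanism of the case $\alpha=0$, where the $A'_j$ are singletons and the image can spread over many blocks. For $\alpha\ge1$ the geometry is different: after reducing (via the decreasing moduli) to the case where $\pi\bigl(\bigcup_j A'_j\bigr)$ is an interval, the $\mathcal{S}_1$ constraint forces it into at most two consecutive blocks $A(\alpha,m,p)\cup A(\alpha,m,p+1)$, so the quantity to control is $\phi(s)/\phi(p)$, not a sum over many block indices, and nothing in your sketch excludes the dangerous regime $s\gg p$ (many large maximal $\mathcal{S}_\alpha$ sets in the domain, image squeezed into low blocks). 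The missing ingredient is the paper's cardinality comparison: $\bigl|\bigcup_j A'_j\bigr|\ge \Gamma_{\alpha+1}(s)-s$ while $\bigl|\pi\bigl(\bigcup_j A'_j\bigr)\bigr|\le 2|A(\alpha,m,p+1)|\le 2\Gamma^{(p+1)}_{\alpha}(m)$, whence the iterated-growth estimate \eqref{eee33} gives $s\le p+\lceil\log_2 m\rceil+2$ and therefore $\phi(s)/\phi(p)\le 2\log^{1/4}m+4$. In addition, your per-block appeal to Lemma \ref{boundby6} would require $\pi^{-1}(A(\alpha,m,r))\in\mathcal{S}_\alpha$, which is not automatic: $\pi^{-1}$ moves elements to the left and the spreading property only works in the other direction. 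With the two-block localization and the $\Gamma$-iteration bound inserted in place of your reduction, the rest of your plan (projecting onto the even blocks of a sign-alternating-in-$k$ vector) does yield the proposition, and is essentially equivalent to the paper's comparison of its vectors $x$ and $y$.
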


\begin{proof}
Choose $m\in M_1$ and define
$$x\ =\ \sum_{k=1}^m \frac{1}{\phi(k)}\sum_{i\in A(\alpha, m, k)}e_i\quad \mbox{ and }\quad y\ =\ \sum_{k=1}^m \frac{1}{\phi(k)}\sum_{i\in A(\alpha, m, k)}(-1)^i e_i.$$
For sufficiently large $m$, we have
\begin{align*}
\|x\|&\ \geq\ \|x\|_4\ =\ \sum_{k=1}^m \frac{\phi(k)-\phi(k-1)}{\phi(k)} \ \geq\ \sum_{k=\lceil \log m \rceil}^m \frac{\sqrt[4]{k}-\sqrt[4]{k-1}}{\sqrt[4]{k}}\\
&\ \geq\ \frac{1}{4}\sum_{k=\lceil \log m \rceil}^m \frac{1}{k}\ \geq\ \frac{1}{4}\int_{2\log m}^m \frac{dx}{x}\ \geq\ \frac{1}{5}\log m.
\end{align*}

Let us bound $\|y\|$ from above. Due to \eqref{e17}, $\|y\|_2 = 0$. Furthermore, due to the alternating sum and Property (P2) in Section \ref{Schreierfam}, 
$$\|y\|_4\ \leq\ (\phi(1)-\phi(0))x_{(\alpha, A(\alpha, m, 1))}(m)\frac{1}{\phi(1)}\ \leq\ 1.$$
Next, we have
\begin{align*}
\|y\|_3 &\ =\ \left(\sum_{k=1}^{m}\frac{\psi(k)-\psi(k-1)}{\psi(k)}  \right)^{1/2}\ \leq\ \left(\log m + \sum_{k=\lceil \log m\rceil}^m \frac{\sqrt{k}-\sqrt{k-1}}{\sqrt{k}}\right)^{1/2}\\
&\ \leq\ \left(\log m + \sum_{k=2}^m \frac{1}{k}\right)^{1/2}\ \leq\ \sqrt{2}\log^{1/2} m.
\end{align*}

Finally, we find an upper bound for $\|y\|_1$. For $s\le A_1 < A_2 < \cdots < A_s$ in $\Max(\mathcal{S}_\alpha)$ and  
an increasing map $\pi: \bigcup_{j=1}^s A_j\rightarrow \mathbb{N}$ with $\pi(\bigcup_{j=1}^s A_j)\in \mathcal{S}_1$, define
$$
T\left(\bigcup_{j=1}^s A_j, \pi\right) \ :=\ \frac{\phi(s)}{s}\sum_{j=1}^s\sum_{i\in A_j} x_{(\alpha, A_j)}(i)|y_{\pi(i)}|.
$$
Let $A = \bigcup_{j=1}^s A_j$.

Case 1: $\alpha = 0$. Then $A_j$'s are singletons and $|A| = s$. We have
$$
T\left(A, \pi\right) \ =\ \frac{\phi(s)}{s}\sum_{i\in A} |y_{\pi(i)}|\ \le\ \frac{\phi(s)}{s}\sum_{i=1}^{\min\{|A|, m\}}\frac{1}{\phi(i)}.
$$
If $|A|\le \log m$, then
$$T(A, \pi)\ \le\ \frac{\phi(|A|)}{|A|}|A|\ \le\ \log^{1/4} m.$$
If $|A|\in (\log m, m]$, by Lemma \ref{bounduncon},
$$
    T(A, \pi)\ \le\ \frac{\phi(|A|)}{|A|}\sum_{i=1}^{|A|}\frac{1}{\phi(i)}\ \le\ \log^{1/4} m + 3. 
$$
If $|A| > m$, since $\phi(x)/x$ is decreasing, and by Lemma \ref{bounduncon}, it follows that
$$T(A, \pi)\ \le\ \frac{\phi(|A|)}{|A|}\sum_{i=1}^m \frac{1}{\phi(i)}\ \le\ \frac{\phi(m)}{m}\sum_{i=1}^m \frac{1}{\phi(i)}\ \le\ \log^{1/4} m + 3.$$
Hence, $T\left(A, \pi\right)\le \log^{1/4} m + 3$.

Case 2: $\alpha\ge 1$. 
Without loss of generality, we assume that  $$\min \pi(A)\in [m, s_{(\alpha+1, m)}(1)-1].$$ 
Since $(|y(i)|)_{i\ge m}$ is decreasing, in finding an upper bound for $\|y\|_1$, we can further assume that $\pi(A)$ is an interval. Then $\pi(A)\in \mathcal{S}_1$ implies that there exists $p\in [1,m]$ such that 
\begin{equation}\label{gc1}\pi(A)\ \subset\ A(\alpha, m, p)\cup A(\alpha, m, p+1).\end{equation}
It follows from \eqref{gc1} that
\begin{align*}
T(A, \pi)&\ =\ \frac{\phi(s)}{s}\sum_{j=1}^s\left(\sum_{i\in A_j\cap \pi^{-1}(A(\alpha, m, p))}\frac{x_{(\alpha, A_j)}(i)}{\phi(p)} + \sum_{i\in A_j\cap \pi^{-1}(A(\alpha, m, p+1))}\frac{x_{(\alpha, A_j)}(i)}{\phi(p+1)}\right)\\
&\ \le\ \frac{\phi(s)}{s}\frac{s}{\phi(p)}\ =\ \frac{\phi(s)}{\phi(p)}.
\end{align*}
If $s \le p$, $T(A, \pi)\le 1$. Assume that $s \ge p+1$. For $\beta < \omega_1$, define the function $\Gamma_{\beta}: \mathbb{N}\rightarrow\mathbb{N}$ as $\Gamma_\beta(i) := s_{(\beta, i)}(1)$. 
By \eqref{gc1}, 
\begin{equation}\label{re20}|A|\ =\ |\pi(A)|\ \le\ 2|A(\alpha, m, p+1)|\ \le\ 2\Gamma^{(p+1)}_{\alpha}(m),\end{equation}
where $f^{(k)}$ is the $k$-time composition of a function $f$.
On the other hand,
\begin{equation}\label{re21}|A|\ \ge\ |[s, s_{(\alpha+1, s)}(1)-1]|\ =\ \Gamma_{\alpha+1}(s)-s.\end{equation}
We deduce from \eqref{re20} and \eqref{re21} that 
\begin{equation}\label{eee32}\Gamma_{\alpha+1}(s)-s\ \le\ 2\Gamma^{(p+1)}_\alpha(m).\end{equation}
Since $\alpha\ge 1$ and $s\ge 1$, 
$$\frac{1}{2}\Gamma_{\alpha+1}(s)\ \ge\ \frac{1}{2}\Gamma_2(s)\ \ge\ s2^{s-1}\ \ge\ s\ \Longrightarrow\ \Gamma_{\alpha+1}(s) - s \ \ge\ \frac{1}{2}\Gamma_{\alpha+1}(s).$$
Then \eqref{eee32} implies that
\begin{equation}\label{eee33}\Gamma^{(s)}_{\alpha}(s)\ \le\ 4\Gamma^{(p+1)}_{\alpha}(m).\end{equation}
We claim that $s\le p+ \lceil \log_2 m\rceil + 2$. Suppose, for a contradiction, that $s \ge p+ \lceil \log_2 m\rceil + 3$. 
Then 
\begin{align*}\Gamma^{(s)}_{\alpha}(s)&\ =\ \Gamma^{(2)}_\alpha(\Gamma^{(s-\lceil \log_2 m\rceil-2)}_\alpha (\Gamma^{\lceil \log_2 m\rceil}_\alpha (s)))\\
&\ \ge\ \Gamma^{(2)}_1(\Gamma^{(p+1)}_\alpha (\Gamma^{\lceil \log_2 m\rceil}_1 (s)))\\
&\ =\ \Gamma^{(2)}_1(\Gamma^{(p+1)}_\alpha (s2^{\lceil \log_2 m\rceil}))\\
&\ >\ 4\Gamma^{(p+1)}_\alpha (m)\quad \mbox{ (because }s\ge p+1\ge 2\mbox{)},\\
\end{align*}
which contradicts \eqref{eee33}. It follows that
\begin{align*}
T(A, \pi)&\ \le\ \frac{\phi(p+ \lceil \log_2 m\rceil + 2)}{\phi(p)}\\
&\ \le\ \frac{\phi(p+3)}{\phi(p)} + \frac{\phi(\log_2 m)}{\phi(p)}\\
&\ \le\ \frac{p+3}{p} + \log_2^{1/4} m\ <\ 2\log^{1/4} m + 4.
\end{align*}

We have shown that for sufficiently large $m$, $\|y\|\ \leq\ \sqrt{2}\log^{1/2} m$; meanwhile, $\|x\|\geq (\log m)/5$. Hence, $(e_i)_i$ is not $\mathcal{S}_{\alpha+1}$-unconditional. 
\end{proof}

\subsection{Quasi-greedy}
Since the semi-norms $\|\cdot\|_1, \|\cdot\|_2$, and $\|\cdot\|_3$ are unconditional. To show that $(e_i)$ is quasi-greedy, we need only to prove the following.

\begin{prop}\label{QGp}
Let $x = \sum_{i=1}^\infty x_ie_i\in X$, with $\|x\| = 1$. For all $\varepsilon \in (0,1]$, $m\in M_1$, and $i_0\ge 1$, we have
$$\left|\sum_{k=1}^m(\phi(k)-\phi(k-1)))\sum_{\substack{i\in L\cap A(\alpha, m, k)\\ i\le i_0}}x_{(\alpha, A(\alpha, m, k))}(i)x_i\right|\ \leq\ 3,$$
where $L = \{i: |x_i|\geq \varepsilon\}$.
\end{prop}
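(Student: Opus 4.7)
The strategy is to exploit the bound $\|x\|_4\le 1$, the concavity of $\phi$, and the constraint $\|x\|_3\le 1$ through an Abel-summation / telescoping argument. Since the semi-norms $\|\cdot\|_1,\|\cdot\|_2,\|\cdot\|_3$ are unconditional, the only obstruction to quasi-greediness is $\|\cdot\|_4$, and this is exactly the quantity displayed in the proposition.

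Denote the displayed quantity by $U$, and for $j\ge 0$ set
\[
V(m,j)\ :=\ \sum_{k=1}^m(\phi(k)-\phi(k-1))\sum_{\substack{i\in A(\alpha,m,k)\\ i\le j}}x_{(\alpha,A(\alpha,m,k))}(i)\,x_i,
\]
so that $|V(m,j)|\le \|x\|_4\le 1$ uniformly in $j$. Decompose the (finite) set $L\cap [1,i_0]$ into its maximal contiguous blocks $[a_1,b_1]<[a_2,b_2]<\cdots<[a_R,b_R]$. Telescoping over the blocks, I would write
\[
U\ =\ \sum_{r=1}^{R}\bigl(V(m,b_r)-V(m,a_r-1)\bigr)\ =\ V(m,i_0)\ -\ V_z(m,i_0),
\]
where $z_i:=x_i\mathbf 1_{L^c}(i)$ is the part of $x$ with coefficients strictly below $\varepsilon$, and $V_z(m,i_0)$ is the corresponding $V$-functional applied to $z$. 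The first summand is controlled by $\|x\|_4\le 1$, reducing the task to proving $|V_z(m,i_0)|\le 2$. Equivalently, the algebraic identity above together with summation by parts on $k$ (with $\Delta\phi_k=\phi(k)-\phi(k-1)$ positive and decreasing by concavity) reduces everything to a uniform estimate on partial sums of the form $\sum_{l=1}^k b_l$, where $b_l$ is the inner expression defining $U$.

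The heart of the argument is the bound on $V_z$. Here I would use three ingredients: (i) the pointwise estimate $|z_i|<\varepsilon$; (ii) the energy estimate $\sum_l(\psi(l)-\psi(l-1))\sum_i x_{(\alpha,A(\alpha,m,l))}(i)z_i^{\,2}\le \|x\|_3^{\,2}\le 1$; and (iii) the identity $\phi(k)-\phi(k-1)=\bigl(\psi(k)-\psi(k-1)\bigr)/(\phi(k)+\phi(k-1))$, which lets one convert a $\phi$-weighted Cauchy--Schwarz estimate into a $\psi$-weighted one, with Lemma~\ref{boundby6} controlling the interaction between the blocks $A(\alpha,m,l)$ and the arbitrary set $L$. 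The anticipated main obstacle is that a direct Cauchy--Schwarz using only (i)--(iii) yields a $\sqrt{\log m}$-type growth rather than a genuine constant; the sharp $O(1)$ bound will require the interval-by-interval analysis to exploit cancellation between neighbouring blocks of $L$ and the decreasing factor $\Delta\phi_k$ in tandem, so that the $\varepsilon$-dependence in (i) balances against the $\varepsilon$-freeness of $\|x\|_3$ uniformly over $m\in M_1$.
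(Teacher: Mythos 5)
Your reduction $U = V(m,i_0) - V_z(m,i_0)$ is just linearity (the block decomposition adds nothing), and it leaves you with the task of bounding the small-coefficient contribution $V_z$ by an absolute constant. This is where the gap is: the three ingredients you propose for that bound --- $|z_i|<\varepsilon$, the $\psi$-weighted energy estimate from $\|x\|_3\le 1$, and the identity $\phi(k)-\phi(k-1)=(\psi(k)-\psi(k-1))/(\phi(k)+\phi(k-1))$ --- are genuinely insufficient, not merely lossy. Taking $z$ constant on each block $A(\alpha,m,k)$ with value $c_k\approx 1/(\phi(k)+\phi(k-1))$ for $k$ beyond $\phi^{-1}(1/\varepsilon)$ satisfies both the pointwise cap and the energy constraint, yet gives $V_z\asymp\sqrt{\log m}$; what rules such $z$ out is only the $\|\cdot\|_4$-information on the full vector $x$, which your reduction discards (and which does not pass to $z$, since $\|\cdot\|_4$ is not unconditional). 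Your closing appeal to ``cancellation between neighbouring blocks of $L$'' is exactly the missing mechanism, and no candidate for it is given, so the proof does not close. Structurally you are working on the wrong side of the split: the inequality that saves the day, $|x_i|^{3/2}\le\varepsilon^{-1/2}x_i^2$, is available only on $L$, i.e.\ for the large coefficients.

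For comparison, the paper bounds the large-coefficient sum directly. It first writes the target as (full sum) minus (small-coefficient sum), giving the crude bound $\|x\|_4+\varepsilon\phi(m)$, which already yields $2$ when $m\le\phi^{-1}(1/\varepsilon)$. For $m>\phi^{-1}(1/\varepsilon)$ it splits the range of $k$ at $j_0\approx\phi^{-1}(1/\varepsilon)$, handles $k\le j_0$ by the crude bound, and for $k>j_0$ applies H\"older with exponents $(3,3/2)$ (not Cauchy--Schwarz): the first factor telescopes via $\sum_{k>j_0}\bigl(\tfrac{1}{\phi(k-1)}-\tfrac{1}{\phi(k)}\bigr)=\tfrac{1}{\phi(j_0)}$ and yields $\varepsilon^{1/3}$, while the second factor uses $|x_i|^{3/2}\le\varepsilon^{-1/2}x_i^2$ on $L$ together with $\psi=\phi^2$ and $\|x\|_3\le 1$ to yield $\varepsilon^{-1/3}$; the two powers of $\varepsilon$ cancel, giving the constant $3$. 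Your plan contains none of this: no $\varepsilon$-dependent cut $j_0$, no exponent-$3$ H\"older, and no use of the lower bound $|x_i|\ge\varepsilon$ on $L$ (you only use the upper bound on its complement), so as written the argument cannot produce a bound better than $O(\sqrt{\log m})$.
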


\begin{proof}
We have
\begin{align*}
&\left|\sum_{k=1}^m(\phi(k)-\phi(k-1))\sum_{i\in L\cap A(\alpha, m, k), i\le i_0}x_{(\alpha, A(\alpha, m, k))}(i)x_i\right|\\
\ \leq\ &\left|\sum_{k=1}^m(\phi(k)-\phi(k-1))\sum_{i\in A(\alpha, m, k), i\le i_0}x_{(\alpha, A(\alpha, m, k))}(i)x_i\right|\\
&\qquad +\left|\sum_{k=1}^m(\phi(k)-\phi(k-1))\sum_{i\in A(\alpha, m, k)\backslash L, i\le i_0}x_{(\alpha, A(\alpha, m, k))}(i)x_i\right|\\
\ \leq\ &\|x\| + \varepsilon\phi(m).
\end{align*}

Case 1: $m\leq \phi^{-1}(1/\varepsilon)$. We deduce that 
$$\left|\sum_{k=1}^m(\phi(k)-\phi(k-1))\sum_{i\in L\cap A(\alpha, m, k), i\le i_0}x_{(\alpha, A(\alpha, m, k))}(i)x_i\right|\ \leq\ 2.$$

Case 2: $\phi^{-1}(1/\varepsilon) < m$. Fix $j_0\in [1, m-1]$ such that $j_0 \leq \phi^{-1}(1/\varepsilon) < j_0+1$. It follows from Case 1 and H\"{o}lder's Inequality that
\begin{align*}
&\left|\sum_{k=1}^m(\phi(k)-\phi(k-1))\sum_{\substack{i\in L\cap A(\alpha, m, k)\\ i\le i_0}}x_{(\alpha, A(\alpha, m, k))}(i)x_i\right|\\
\ \leq\ &2 + \left|\sum_{k=j_0+1}^m(\phi(k)-\phi(k-1))\sum_{\substack{i\in L\cap A(\alpha, m, k)\\ i\le i_0}}x_{(\alpha, A(\alpha, m, k))}(i)x_i\right|\\
\ \le\ &2+ \sum_{\substack{j_0+1\leq k\leq m\\ i\in L\cap A(\alpha, m, k)\\ i\le i_0}}\frac{(x_{(\alpha, A(\alpha, m, k))}(i))^{1/3}(\phi(k)-\phi(k-1))}{(\phi^2(k)-\phi^2(k-1))^{2/3}} \\
&\qquad\qquad\qquad\qquad \cdot  (\phi^2(k)-\phi^2(k-1))^{2/3}(x_{(\alpha, A(\alpha, m, k))}(i))^{2/3}\left|x_i\right|\\
\ \leq\ &2+ \left(\sum_{\substack{j_0+1\leq k\leq m\\ i\in L\cap A(\alpha, m, k)\\ i\le i_0}}\frac{x_{(\alpha, A(\alpha, m, k))}(i)(\phi(k)-\phi(k-1))^3}{(\phi^2(k)-\phi^2(k-1))^{2}}\right)^{1/3}\\
&\qquad\qquad\qquad\qquad \cdot \left(\sum_{\substack{j_0+1\leq k\leq m\\ i\in L\cap A(\alpha, m, k)\\ i\le i_0}} (\phi^2(k)-\phi^2(k-1))x_{(\alpha, A(\alpha, m, k))}(i)|x_i|^{3/2}\right)^{2/3}.
\end{align*}
We estimate the first factor as follows:
\begin{align*}
&\left(\sum_{\substack{j_0+1\leq k\leq m\\ i\in L\cap A(\alpha, m, k)\\ i\le i_0}}\frac{x_{(\alpha, A(\alpha, m, k))}(i)(\phi(k)-\phi(k-1))^3}{(\phi^2(k)-\phi^2(k-1))^{2}}\right)^{1/3}\\
\ \leq\ &\left(\sum_{k=j_0+1}^m\frac{\phi(k)-\phi(k-1)}{(\phi(k)+\phi(k-1))^{2}}\right)^{1/3}\\
\ \leq\ &\left(\frac{1}{4}\sum_{k=j_0+1}^\infty\frac{\phi(k)-\phi(k-1)}{\phi(k)\phi(k-1)}\right)^{1/3}\\
\ =\ &\left(\frac{1}{4}\sum_{k=j_0+1}^\infty\left(\frac{1}{\phi(k-1)}-\frac{1}{\phi(k)}\right)\right)^{1/3}\\
\ =\ &\frac{1}{4^{1/3}}\frac{1}{\phi^{1/3}(j_0)}\\
\ =\ &\frac{1}{4^{1/3}}\frac{\phi^{1/3}(j_0+1)}{\phi^{1/3}(j_0)}\frac{1}{\phi^{1/3}(j_0+1)}\ \leq\ \varepsilon^{1/3}.
\end{align*}
To estimate the second factor, we observe that for each $A(\alpha, m, k)$, 
$$\sum_{\substack{i\in L\cap A(\alpha, m, k)\\ i\le i_0}}x_{(\alpha, A(\alpha, m, k))}(i)|x_i|^{3/2}\ \leq\ \varepsilon^{-1/2}\sum_{\substack{i\in L\cap A(\alpha, m, k)\\ i\le i_0}}x_{(\alpha, A(\alpha, m, k))}(i)|x_i|^2;$$
hence,
\begin{align*}
&\left(\sum_{k=j_0+1}^m (\phi^2(k)-\phi^2(k-1))\sum_{\substack{i\in L\cap A(\alpha, m, k)\\ i\le i_0}}x_{(\alpha, A(\alpha, m, k))}(i)|x_i|^{3/2}\right)^{2/3}\\
\ \leq\ & \varepsilon^{-1/3}\left(\sum_{k=j_0+1}^m (\phi^2(k)-\phi^2(k-1))\sum_{\substack{i\in L\cap A(\alpha, m, k)\\ i\le i_0}}x_{(\alpha, A(\alpha, m, k))}(i)|x_i|^2\right)^{2/3}\\
\ \leq\ & \varepsilon^{-1/3}(\|x\|^2)^{2/3}\ =\ \varepsilon^{-1/3}.
\end{align*}
Combining our estimates, we obtain
$$\left|\sum_{k=1}^m(\phi(k)-\phi(k-1))\sum_{\substack{i\in L\cap A(\alpha, m, k)\\ i\le i_0}}x_{(\alpha, A(\alpha, m, k))}(i)x_i\right|\ \leq\ 2+\varepsilon^{1/3}\varepsilon^{-1/3}\ =\ 3,$$
which finishes the proof. 
\end{proof}

\section{Construction of an $(\alpha,\beta)$-quasi-greedy basis for $\beta \le \alpha$ and $(\alpha, \beta)\neq (0,0)$}
We first construct an example of an $(\alpha,\alpha)$-quasi-greedy basis for each $\alpha \ge 1$ then an $(\alpha,\beta)$-quasi-greedy basis for $0\le \beta < \alpha$.
\subsection{An $\boldsymbol{(\alpha, \alpha)}$-quasi-greedy basis for $\boldsymbol{\alpha \ge 1}$}\label{ex(a,a)}
For $i\in \mathbb{N}$, let $F_i := [s_{(\alpha, 1)}(i-1), s_{(\alpha, 1)}(i)-1]$ (recall the definition of $s_{(\alpha, m)}(i)$ from Section \ref{(alpha, alpha+1)}), and thus, $\bigcup_{i=1}^{\infty}F_i = \mathbb{N}$, and $F_1 < F_2 < F_3 < \cdots$ are in $\Max(\mathcal{S}_\alpha)$. 
Given $(x_i)_{i=1}^{\infty}\in c_{00}$, define 
\begin{align*}
    \|(x_i)_i\|_0&\ =\ \max_{i}|x_i|,\\
    \|(x_i)_i\|_1 &\ =\ \left(\sum_{j=1}^{\infty} \sum_{i\in F_j}x_{(\alpha, F_j)}(i)x_i^2\right)^{1/2},\\
    \|(x_i)_i\|_2 &\ =\ \sup_{N, i_0\in \mathbb{N}}\sum_{j=N}^{2N-1}\frac{1}{\sqrt{j-N+1}}\left|\sum_{i\in F_j, i\le i_0}x_{(\alpha, F_j)}(i)x_i\right|.
\end{align*}
Let $X$ be the completion of $c_{00}$ with respect to the norm $\|\cdot\| = \max\{\|\cdot\|_{0}, \|\cdot\|_{1}, \|\cdot\|_2\}$. Clearly, the canonical basis $(e_i)_i$ is a normalized Schauder basis of $X$. 

\begin{exa}\normalfont
In the case $\alpha = 1$, we have $$F_k \ =\ \{2^{k-1}, \ldots, 2^k-1\}, k\in \mathbb{N},$$
and for $(x_i)_{i=1}^{\infty}\in c_{00}$,  
\begin{align*}
    \|(x_i)_i\|_0&\ =\ \max_{i}|x_i|,\\
    \|(x_i)_i\|_1 &\ =\ \left(\sum_{j=1}^{\infty} \frac{1}{2^{j-1}}\sum_{i\in F_j}x_i^2\right)^{1/2},\\
    \|(x_i)_i\|_2 &\ =\ \sup_{N, i_0\in \mathbb{N}}\sum_{j=N}^{2N-1}\frac{1}{2^{j-1}\sqrt{j-N+1}}\left|\sum_{i\in F_j, i\le i_0}x_i\right|.
\end{align*}    
\end{exa}

\begin{prop}\label{pe10}
The basis $(e_i)_i$ is $\mathcal{S}_\alpha$-democratic but not $\mathcal{S}_{\alpha+1}$-democratic.    
\end{prop}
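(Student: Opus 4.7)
The plan is to prove the two assertions separately. For $\mathcal{S}_\alpha$-democracy, I would establish the uniform bound $\|1_A\| = O(1)$ whenever $A \in \mathcal{S}_\alpha$; combined with $\|1_B\| \ge \|1_B\|_0 = 1$ for any nonempty $B$, this yields the conclusion. Clearly $\|1_A\|_0 = 1$. For $\|\cdot\|_1$, since each $F_j \in \Max(\mathcal{S}_\alpha)$ and $A \in \mathcal{S}_\alpha$, applying Lemma \ref{boundby6} (with $F = A$ and the finitely many $F_j$'s meeting $A$) gives
\begin{equation*}
\sum_j \sum_{i \in F_j \cap A} x_{(\alpha, F_j)}(i) \le 6,
\end{equation*}
so $\|1_A\|_1 \le \sqrt{6}$. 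For $\|\cdot\|_2$, set $a_j := \sum_{i \in F_j \cap A} x_{(\alpha, F_j)}(i) \in [0,1]$; the same lemma yields $\sum_j a_j \le 6$. Since the weights $1/\sqrt{j - N + 1}$ are decreasing in $j$ and each $a_j \le 1$, the supremum over $N$ and $i_0$ is maximized by concentrating the mass at the smallest indices, giving $\|1_A\|_2 \le \sum_{k=1}^{6} 1/\sqrt{k} < 4$.

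For the failure of $\mathcal{S}_{\alpha+1}$-democracy, I would exhibit the sets $A_N := \bigcup_{j=N}^{2N-1} F_j$ together with a suitable comparison set $B_N$. Since each $F_j \in \Max(\mathcal{S}_\alpha)$ and $\min F_N = s_{(\alpha, 1)}(N-1) \ge 2^{N-1} \ge N$ (using $\mathcal{S}_1 \subset \mathcal{S}_\alpha$), the set $A_N$ is a union of $N$ members of $\mathcal{S}_\alpha$ starting above $N$, so by \eqref{er11}, $A_N \in \mathcal{S}_{\alpha+1}$. Taking the outer parameter in $\|\cdot\|_2$ equal to $N$ and $i_0$ larger than $\max A_N$, property (P1) gives
\begin{equation*}
\|1_{A_N}\|_2 \;\ge\; \sum_{j=N}^{2N-1} \frac{1}{\sqrt{j - N + 1}} \sum_{i \in F_j} x_{(\alpha, F_j)}(i) \;=\; \sum_{k=1}^{N} \frac{1}{\sqrt{k}} \;\ge\; \sqrt{N}.
\end{equation*}
For $B_N$, take $B_N := F_J$ with $J$ large enough that $|F_J| \ge |A_N|$, which is possible because $|F_j| \ge \min F_j \ge 2^{j-1}$ grows geometrically. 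Since $B_N \in \mathcal{S}_\alpha$, the first part gives $\|1_{B_N}\| \le 4$, so $\|1_{A_N}\|/\|1_{B_N}\| \to \infty$ as $N \to \infty$, proving failure of $\mathcal{S}_{\alpha+1}$-democracy.

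The hard part is the bound on $\|1_A\|_2$ in the democracy direction. The weights $1/\sqrt{k}$ are not summable over ranges of length $\Theta(N)$, so without extra constraints one would expect $\|1_A\|_2$ to grow with the support size. The saving grace is the interplay between the pointwise bound $a_j \le 1$ and the total mass bound $\sum_j a_j \le 6$ supplied by Lemma \ref{boundby6}: together these restrict the effective support of $(a_j)$ to at most six indices, capping the weighted sum by a constant. The non-democracy construction is calibrated to circumvent exactly this: by letting $A_N$ consume $N$ entire blocks $F_j$, each $a_j$ saturates the pointwise bound at $1$, and the total $\sum_j a_j = N$ vastly exceeds every $\mathcal{S}_\alpha$-threshold, producing the desired $\sqrt{N}$ blow-up.
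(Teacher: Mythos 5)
Your proposal is correct and follows essentially the same route as the paper: the $\mathcal{S}_\alpha$-democracy bound comes from Lemma \ref{boundby6} giving a uniform constant bound on $\|1_A\|$ (compared against $\|1_B\|\ge\|1_B\|_0$), and the failure of $\mathcal{S}_{\alpha+1}$-democracy uses the very same sets $\bigcup_{j=N}^{2N-1}F_j$ with the $\|\cdot\|_2$ seminorm yielding the $\sqrt{N}$ lower bound. The only cosmetic differences are that you bound each seminorm separately (the paper bounds $\|1_A\|$ by $6$ in one stroke) and you take the comparison set to be a single block $F_J$ rather than an arbitrary $\mathcal{S}_\alpha$-set of matching cardinality; both choices are equally valid.
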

\begin{proof}
For any $A\in \mathcal{S}_\alpha$, by Lemma \ref{boundby6}, $\|1_A\|\le 6$. Hence, if $B\in [\mathbb{N}]^{<\infty}$ with $|B|\ge |A|$, we have $\|1_A\|\le 6\|1_B\|_0\le 6\|1_B\|$, and thus, $(e_i)_i$ is $\mathcal{S}_\alpha$-democratic. 

Next, we show that $(e_i)_i$ is not $\mathcal{S}_{\alpha+1}$-democratic. Let $E_N = \cup_{j=N}^{2N-1} F_j$, which is in $\mathcal{S}_{\alpha+1}$ because each $F_j$ is in $\mathcal{S}_\alpha$ and $\min F_N \ge N$. We have 
$$\|1_{E_N}\|\ \ge\ \|1_{E_N}\|_2\ =\ \sum_{j=N}^{2N-1}\frac{1}{\sqrt{j-N+1}}\ =\ \sum_{j=1}^N \frac{1}{\sqrt{j}}\ \ge\ \sqrt{N}.$$
On the other hand, if $\widetilde{E}_N$ is in $\mathcal{S}_\alpha$ and $|\widetilde{E}_N| = |E_N|$, then it follows from the first part of the proof that $\|1_{\widetilde{E}_N}\|\le 6$. Since $\|1_{E_N}\|/\|1_{\widetilde{E}_N}\|\rightarrow\infty$ as $N\rightarrow\infty$, $(e_i)_i$ is not $\mathcal{S}_{\alpha+1}$-democratic. 
\end{proof}

\begin{prop}\label{rp11}
The basis $(e_i)_i$ is $\mathcal{S}_\alpha$-unconditional but not $\mathcal{S}_{\alpha+1}$-unconditional.    
\end{prop}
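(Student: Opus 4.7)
The claim has two halves, attacked by very different means.

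For the $\mathcal{S}_\alpha$-unconditionality part, fix $E\in\mathcal{S}_\alpha$ and $x\in c_{00}$ and bound $\|P_Ex\|_i\le C\|x\|$ for $i=0,1,2$. The first two bounds are immediate since $\|\cdot\|_0$ and $\|\cdot\|_1$ depend only on $(|x_i|)_i$. For $\|\cdot\|_2$ the plan is a double Cauchy--Schwarz: set $c_j:=\sum_{i\in F_j\cap E}x_{(\alpha,F_j)}(i)$ and $d_j^2:=\sum_{i\in F_j}x_{(\alpha,F_j)}(i)x_i^2$. First, Cauchy--Schwarz within each $F_j$ gives $\sum_{i\in F_j\cap E}x_{(\alpha,F_j)}(i)|x_i|\le c_j^{1/2}d_j$; second, Cauchy--Schwarz over $j\in[N,2N-1]$ bounds the relevant sum by $\bigl(\sum_{j=N}^{2N-1}c_j/(j-N+1)\bigr)^{1/2}\|x\|_1\le\bigl(\sum_jc_j\bigr)^{1/2}\|x\|_1$. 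Finally, Lemma~\ref{boundby6} applied to $E\in\mathcal{S}_\alpha$ and the maximal $\mathcal{S}_\alpha$-sets $F_N<\cdots<F_{2N-1}$ produces $\sum_{j=N}^{2N-1}c_j\le 6$, so $\|P_Ex\|_2\le\sqrt6\,\|x\|$.

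For the failure of $\mathcal{S}_{\alpha+1}$-unconditionality, the plan is to exhibit, for every large $N$, a vector $y_N$ and an $\mathcal{S}_{\alpha+1}$-set $E$ with $\|P_Ey_N\|/\|y_N\|\to\infty$. Set $E_N=\bigcup_{j=N}^{2N-1}F_j\in\mathcal{S}_{\alpha+1}$, enumerate each block as $F_j=\{i^{(j)}_1<i^{(j)}_2<\cdots\}$, put $\epsilon_{i^{(j)}_k}=(-1)^{k-1}$, and define
\[
y_N\ =\ \sum_{j=N}^{2N-1}\frac{1}{\sqrt{j-N+1}}\sum_{i\in F_j}\epsilon_ie_i,\qquad E\ =\ \{i\in E_N:\epsilon_i=+1\},
\]
with $E\in\mathcal{S}_{\alpha+1}$ by the hereditary property. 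The projection $P_Ey_N$ has only nonnegative coefficients, and evaluating $\|P_Ey_N\|_2$ at $(N'=N,\ i_0\ge\max E_N)$ together with the pairing bound $\sum_{k\text{ odd}}x_{(\alpha,F_j)}(i^{(j)}_k)\ge 1/2$ (coming from the decreasing property (P2) applied to consecutive terms) yields
\[
\|P_Ey_N\|_2\ \ge\ \frac12\sum_{j=N}^{2N-1}\frac{1}{j-N+1}\ \gtrsim\ \log N.
\]

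On the other hand, $\|y_N\|_0=1$ and $\|y_N\|_1^2=\sum_{j=N}^{2N-1}1/(j-N+1)\lesssim\log N$, while within each block the alternating truncated sum $\bigl|\sum_{i\in F_j,\ i\le i_0}x_{(\alpha,F_j)}(i)\epsilon_i\bigr|$ is controlled by its leading term $x_{(\alpha,F_j)}(\min F_j)\le 1/\min F_j\le 2^{1-j}$, using the recursive definition of the repeated averages and $\mathcal{S}_1\subset\mathcal{S}_\alpha$. Summing these geometrically small contributions against the weights $c_j/\sqrt{j-N'+1}$ gives $\|y_N\|_2\to 0$ uniformly in $(N',i_0)$, so $\|y_N\|\asymp\sqrt{\log N}$ and $\|P_Ey_N\|/\|y_N\|\gtrsim\sqrt{\log N}\to\infty$. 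The main technical obstacle is precisely this uniform control of $\|y_N\|_2$ across all admissible $N'$ (not only $N'=N$): a priori many small contributions could aggregate, and one needs the rapid decay of $x_{(\alpha,F_j)}(\min F_j)$ together with the Abel/alternating-series bound for each block to reduce it to a routine geometric estimate.
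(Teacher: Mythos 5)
Your proposal is correct, and it is worth comparing with the paper's argument because the two halves are routed somewhat differently. For $\mathcal{S}_\alpha$-unconditionality both you and the paper reduce to controlling $\|P_E x\|_2$ via Lemma \ref{boundby6}; the paper simply uses $|x_i|\le\|x\|_0$ and the crude bound $\tfrac{1}{\sqrt{j-N+1}}\le 1$ to get $\|P_Ex\|_2\le 6\|x\|_0$, whereas your double Cauchy--Schwarz gives $\|P_Ex\|_2\le\sqrt6\,\|x\|_1$ --- a valid, slightly more elaborate variant of the same idea. For the failure of $\mathcal{S}_{\alpha+1}$-unconditionality the paper works with the same pair of vectors (alternating signs versus constant signs on $\bigcup_{j=N}^{2N-1}F_j$ with weights $1/\sqrt{j-N+1}$) but compares $\|y_N\|$ with $\|x_N\|$, i.e.\ it exhibits a sign change inside an $\mathcal{S}_{\alpha+1}$ set that blows up the norm; to read this as a statement about projections one still invokes the standard identity $y_N=x_N-2P_Ex_N$ with $E\in\mathcal{S}_{\alpha+1}$ hereditary, a step the paper leaves implicit. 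You instead verify the definition directly: you project the alternating vector onto the positive-sign half $E$ of the $\mathcal{S}_{\alpha+1}$ set and bound $\|P_Ey_N\|_2\gtrsim\log N$ from below via the pairing estimate $\sum_{k\ \mathrm{odd}}x_{(\alpha,F_j)}(i^{(j)}_k)\ge\tfrac12$ (a correct consequence of (P1)--(P3)), while the paper gets its lower bound for free because the all-ones-signs vector makes the inner sums exactly $1/\sqrt{j-N+1}$. The price of your route is this extra $\ge\tfrac12$ estimate; the gain is that no sign-flip-to-projection reduction is needed. Your upper bound for the alternating vector, $x_{(\alpha,F_j)}(\min F_j)\le 1/\min F_j\le 2^{1-j}$ (valid since $\alpha\ge1$ in this subsection), plays the role of \eqref{re41}, and your explicit uniformity over all windows $N'$ in bounding $\|y_N\|_2$ is in fact slightly more careful than the paper's display \eqref{re40}, which is written for the window starting at $N$; the geometric decay handles the general case in both treatments. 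Altogether the argument is sound and yields the same conclusion $\|P_Ey_N\|/\|y_N\|\gtrsim\sqrt{\log N}\to\infty$.
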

\begin{proof}
The basis $(e_i)_i$ is unconditional with respect to the norms $\|\cdot\|_0$ and $\|\cdot\|_1$. Therefore, in order to show $\mathcal{S}_\alpha$-unconditionality, it suffices to show for $x\in X$, with $\|x\| = 1$, and $F\in \mathcal{S}_\alpha$, that $\|P_F(x)\|_2\le 6$. 

Since $|x_i|\le 1$ for all $i\in \mathbb{N}$, Lemma \ref{boundby6} yields, for $N, i_0\in \mathbb{N}$, that
$$\sum_{j=N}^{2N-1}\frac{1}{\sqrt{j-N+1}}\left|\sum_{i\in F_j\cap F, i\le i_0}x_{(\alpha, F_j)}(i)x_i\right|\ \le\ \sum_{j=N}^{2N-1}\sum_{i\in F_j\cap F}x_{(\alpha, F_j)}(i)\ \le\ 6,$$
which proves our claim. 

To see that $(e_i)_i$ is not $\mathcal{S}_{\alpha+1}$-unconditional, we define  
    $$x \ =\ x_N\ =\  \sum_{j=N}^{2N-1} \sum_{i\in F_j}\frac{(-1)^i}{\sqrt{j-N+1}}e_{i}$$
and
    $$y \ =\ y_N\ =\ \sum_{j=N}^{2N-1} \sum_{i\in F_j}\frac{1}{\sqrt{j-N+1}}e_{i}.$$
It is easy to see that $\|x\|_0 = \|y\|_0 = 1$, $\|x\|_1 = \|y\|_1 = (\sum_{j=1}^N 1/j)^{1/2}$, and by the alternating sum criteria,
\begin{equation}\label{re40}\|x\|_2\ \le\ \sum_{j=N}^{2N-1} \frac{x_{(\alpha, F_j)}(\min F_j)}{j-N+1}\ =\ \sum_{j=1}^N\frac{x_{(\alpha, F_{j+N-1})}(\min F_{j+N-1})}{j}.\end{equation}
By Properties (P1) and (P2) in Section \ref{Schreierfam}, for $j\ge 1$, 
\begin{equation}\label{re41}x_{(\alpha, F_{j+1})}(\min F_{j+1})\ \le\ x_{(\alpha, F_j)}(\max F_j)\ \le\ \frac{1}{|F_j|}.\end{equation}
We deduce from \eqref{re40} and \eqref{re41} that
\begin{align*}\|x\|_2&\ \le\ 1 + \sum_{j=2}^N\frac{x_{(\alpha, F_{j+N-1})}(\min F_{j+N-1})}{j}\\
&\ \le\ 1 + \sum_{j=2}^N \frac{1}{|F_{j+N-2}|j}\\
&\ \le\  1 + \sum_{j=2}^\infty \frac{1}{2^{j-2}j}\ < \ 3.
\end{align*}
Hence, for sufficiently large $N$, $\|x\| = \|x\|_1 = (\sum_{j=1}^N 1/j)^{1/2}$.

On the other hand,
\begin{align*}
    \|y\|\ \ge\ \|y\|_2\ \ge\ \sum_{j=N}^{2N-1}\frac{1}{j-N+1}\ =\ \sum_{j=1}^{N}\frac{1}{j}. 
\end{align*}
Therefore, $\|y_N\|/\|x_N\|\rightarrow\infty$ as $N\rightarrow\infty$, and thus, $(e_i)_i$ is not $\mathcal{S}_{\alpha+1}$-unconditional. 
\end{proof}

\begin{prop}\label{QGgen}
The basis $(e_i)_i$ is quasi-greedy.    
\end{prop}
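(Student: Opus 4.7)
The plan is to adapt the approach of Proposition \ref{QGp} to the simpler norm system of Section \ref{ex(a,a)}. I observe first that $\|\cdot\|_0$ and $\|\cdot\|_1$ depend on $x=\sum_ix_ie_i$ only through $(|x_i|)_i$, so they are unconditional; in particular $\|P_Lx\|_j\le\|x\|_j$ for $j\in\{0,1\}$ and every $L\subset\mathbb{N}$. As in Proposition \ref{QGp}, to establish quasi-greediness it therefore suffices to produce an absolute constant $C$ with $\|P_Lx\|_2\le C$ whenever $\|x\|=1$, $\varepsilon\in(0,1]$ and $L=\{i:|x_i|\ge\varepsilon\}$; equivalently, to show that for all $N,i_0\in\mathbb{N}$,
\begin{equation*}
T(N,i_0)\ :=\ \sum_{j=N}^{2N-1}\frac{1}{\sqrt{j-N+1}}\,\Big|\sum_{\substack{i\in F_j\cap L\\ i\le i_0}}x_{(\alpha,F_j)}(i)\,x_i\Big|\ \le\ C.
\end{equation*}

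I would fix the cut-off $j_0:=\lceil 1/\varepsilon^2\rceil$ and split the sum into a \emph{head} over $j\in[N,\min\{N+j_0-1,2N-1\}]$ and a \emph{tail} over $j\in[N+j_0,2N-1]$ (the tail being empty when $N\le j_0$). For the head, the triangle inequality inside each absolute value yields
\begin{equation*}
\Big|\sum_{\substack{i\in F_j\cap L\\ i\le i_0}}x_{(\alpha,F_j)}(i)\,x_i\Big|\ \le\ \Big|\sum_{\substack{i\in F_j\\ i\le i_0}}x_{(\alpha,F_j)}(i)\,x_i\Big|\ +\ \varepsilon,
\end{equation*}
since $|x_i|<\varepsilon$ off $L$ and $\sum_ix_{(\alpha,F_j)}(i)=1$. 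Weighted summation of the first term recovers a partial sum of the expression defining $\|x\|_2$ for the same $N$ and $i_0$, and is thus $\le\|x\|_2\le 1$. The $\varepsilon$-term contributes at most $\varepsilon\sum_{k=1}^{j_0}k^{-1/2}\le 2\varepsilon\sqrt{j_0}\le 2\sqrt{2}$. So the head is bounded by a constant.

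For the tail I would apply H\"older's inequality with exponents $3$ and $3/2$ to the factorisation
\begin{equation*}
\frac{x_{(\alpha,F_j)}(i)\,|x_i|}{\sqrt{j-N+1}}\ =\ \Big[\frac{x_{(\alpha,F_j)}(i)^{1/3}}{\sqrt{j-N+1}}\Big]\cdot\big[x_{(\alpha,F_j)}(i)^{2/3}|x_i|\big],
\end{equation*}
yielding the bound
\begin{equation*}
T_{\mathrm{tail}}\ \le\ \Big(\sum_{j,i}\frac{x_{(\alpha,F_j)}(i)}{(j-N+1)^{3/2}}\Big)^{1/3}\Big(\sum_{j,i}x_{(\alpha,F_j)}(i)\,|x_i|^{3/2}\Big)^{2/3}.
\end{equation*}
Using $\sum_ix_{(\alpha,F_j)}(i)\le 1$ and the $p$-series tail $\sum_{k>j_0}k^{-3/2}\le 2/\sqrt{j_0}\le 2\varepsilon$, the first factor is at most $(2\varepsilon)^{1/3}$. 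For the second, $|x_i|\ge\varepsilon$ on $L$ gives $|x_i|^{3/2}\le|x_i|^2/\varepsilon^{1/2}$, and $\sum_{j,i}x_{(\alpha,F_j)}(i)x_i^2\le\|x\|_1^2\le 1$, so the second factor is at most $\varepsilon^{-1/3}$. The powers of $\varepsilon$ cancel, and $T_{\mathrm{tail}}\le 2^{1/3}$.

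The principal difficulty, exactly as in Proposition \ref{QGp}, is discovering the H\"older split and cut-off in which the $p$-series gain of order $\varepsilon$ from the first factor precisely cancels the $\varepsilon^{-1/2}$ cost incurred by trading $\|\cdot\|_1$-control of $x_i^2$ for $|x_i|^{3/2}$. The weights $1/\sqrt{j-N+1}$ and the series $\sum k^{-3/2}$ here play the roles assumed in Section \ref{(alpha, alpha+1)} by $\phi(k)-\phi(k-1)$ and $\sum(\phi(k)-\phi(k-1))/(\phi(k)+\phi(k-1))^{2}$, respectively.
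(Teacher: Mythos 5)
Your proposal is correct and follows essentially the same route as the paper's proof: the same reduction to bounding the $\|\cdot\|_2$-term for the level set of large coefficients, the same cut-off at roughly $\varepsilon^{-2}$ with a head/tail split, the head controlled by $\|x\|_2$ plus an $\varepsilon\sum k^{-1/2}$ term, and the tail handled by the identical H\"older factorisation with exponents $3$ and $3/2$ against $\|x\|_1$. Only the explicit constants differ slightly, which is immaterial.
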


\begin{proof}
It is clear that $(e_i)_i$ is quasi-greedy as basis of the completion of $c_{00}$ with respect to the norms $\|\cdot\|_0$ and $\|\cdot\|_1$. It, therefore, suffices to prove that for $(x_i)_i\in c_{00}$, with $\|(x_i)_i\| = 1$, it follows that 
$$\sum_{j=N}^{2N-1}\frac{1}{\sqrt{j-N+1}}\left|\sum_{i\in \Lambda_j, i\le i_0}x_{(\alpha, F_j)}(i)x_i\right|\ \leq\ 3+\sqrt{2},$$
for all $\varepsilon > 0$, for all $N, i_0\in \mathbb{N}$, and $\Lambda_j = \{i\in F_j: |x_i|>\varepsilon\}$. Since $\max_i |x_i| \leq 1$, we can assume without loss of generality, that $0 < \varepsilon < 1$. Set $L = \lfloor \varepsilon^{-2}\rfloor$ to have $1/2\leq \varepsilon^2 L \leq 1$. We distinguish between two cases. 

For $M\leq \min\{2N-1, N+L-1\}$, we have 
\begin{align}\label{ar1}
&\sum_{j=N}^{M}\frac{1}{\sqrt{j-N+1}}\left|\sum_{i\in  \Lambda_j, i\le i_0}x_{(\alpha, F_j)}(i)x_i\right|\nonumber\\
&\ \leq\ \sum_{j=N}^{M}\frac{1}{\sqrt{j-N+1}}\left|\sum_{i\in F_j, i\le i_0}x_{(\alpha, F_j)}(i)x_i\right| + \sum_{j=N}^{M}\frac{1}{\sqrt{j-N+1}}\left|\sum_{\substack{i\in F_j, i\le i_0\\ |x_i|\leq \varepsilon}}x_{(\alpha, F_j)}(i)x_i\right|\nonumber\\
&\ \leq\ \|(x_i)_i\|+\varepsilon \sum_{j=N}^{M}\frac{1}{\sqrt{j-N+1}}\nonumber\\
&\ =\ 1+ \varepsilon \sum_{j=1}^{M-N+1}\frac{1}{\sqrt{j}}\ \leq\ 1+ 2\varepsilon \sqrt{M-N+1}\ \leq\ 1+2\varepsilon \sqrt{L}\ \leq 3. 
\end{align}

Case 1: $N\le L$, \eqref{ar1} gives
$$\sum_{j=N}^{2N-1}\frac{1}{\sqrt{j-N+1}}\left|\sum_{i\in  \Lambda_j, i\le i_0}x_{(\alpha, F_j)}(i)x_i\right|\ \le\ 3.$$

Case 2: $N > L$, we have
\begin{align*}
&\sum_{j=N}^{2N-1}\frac{1}{\sqrt{j-N+1}}\left|\sum_{i\in  \Lambda_j, i\le i_0}x_{(\alpha, F_j)}(i) x_i\right|\\
&\ =\ \sum_{j=N}^{N+L-1}\frac{1}{\sqrt{j-N+1}}\left|\sum_{i\in  \Lambda_j, i\le i_0}x_{(\alpha, F_j)}(i)x_i\right| + \sum_{j=N+L}^{2N-1}\frac{1}{\sqrt{j-N+1}}\left|\sum_{i\in  \Lambda_j, i\le i_0}x_{(\alpha, F_j)}(i)x_i\right|\\
&\ \leq\ 3 + \sum_{\substack{N+L\leq j\leq 2N-1\\i\in F_j, i\le i_0\\ |x_i|>\varepsilon}}\left|\frac{x_{(\alpha, F_j)}(i)}{\sqrt{j-N+1}}x_i\right|\quad \mbox{ (by \eqref{ar1})}.
\end{align*}
Applying H\"older's Inequality to the second term yields
\begin{align*}
&\sum_{\substack{N+L\leq j\leq 2N-1\\i\in F_j, i\le i_0\\ |x_i|>\varepsilon}}\left|\frac{x_{(\alpha, F_j)}(i)}{\sqrt{j-N+1}}x_i\right|\\
&\ =\ \sum_{\substack{N+L\leq j\leq 2N-1\\i\in F_j, i\le i_0\\ |x_i|>\varepsilon}} \left|\left(x_{(\alpha, F_j)}(i)\right)^{1/3}\frac{1}{\sqrt{j-N+1}}\cdot \left(x_{(\alpha, F_j)}(i)\right)^{2/3}x_i\right|\\
&\ \leq\ \left(\sum_{\substack{N+L\leq j\leq 2N-1\\i\in F_j, i\le i_0\\ |x_i|>\varepsilon}}\frac{x_{(\alpha, F_j)}(i)}{(j-N+1)^{3/2}}\right)^{1/3} \left(\sum_{N+L\leq j\leq 2N-1}\sum_{\substack{i\in F_j, i\le i_0\\ |x_i|>\varepsilon}}x_{(\alpha, F_j)}(i)|x_i|^{3/2}\right)^{2/3}\\
&\ \leq\ \left(\sum_{j=L+1}^{\infty}\frac{1}{j^{3/2}}\right)^{1/3} \left(\varepsilon^{-1/2}\sum_{N+L\leq j\leq 2N-1}\sum_{\substack{i\in F_j, i\le i_0\\ |x_i|>\varepsilon}}x_{(\alpha, F_j)}(i)x_i^{2}\right)^{2/3}\\
&\ \leq\ 2^{1/3}L^{-1/6}\varepsilon^{-1/3} \ \leq \ \sqrt{2}.
\end{align*}
This completes our proof. 
\end{proof}    

\subsection{An $\boldsymbol{(\alpha, \beta)}$-quasi-greedy basis for $\boldsymbol{\beta < \alpha}$}
We slightly modify our $(\alpha, \alpha)$-quasi-greedy basis. Choose two sequences of natural numbers $(m_i)_{i=1}^\infty$ and $(n_i)_{i=1}^\infty$ such that 
$$m_i \ <\ n_i\  <\ 2n_i-1 \ <\ m_{i+1}\quad\mbox{ and }\quad s_{(\beta+1, \min F_{m_i})}(1)\ <\ \min F_{n_i}.$$
For each $i\in \mathbb{N}$, choose 
$$A_i\ =\ [\min F_{m_i}, s_{(\beta+1, \min F_{m_i})}(1)-1] = [s_{(\beta+1,\min F_{m_i})}(0), s_{(\beta+1, \min F_{m_i})}(1)-1],$$ which is an element of $\Max(\mathcal{S}_{\beta+1})$.

For $x = (x_i)_i\in c_{00}$, we define the semi-norm
$$\|x\|_\beta\ =\ \sup_{j}\left(\min A_j\sum_{i\in A_j}x_{(\beta+1, A_j)}(i)|x_i|\right).$$
Let $Y$ be the completion of $c_{00}$ with respect to the following norm:
$$\|x\|_{(\alpha, \beta)}\ :=\ \max\{\|x\|_{(\alpha, \alpha)}, \|x\|_\beta\},$$
where $\|x\|_{(\alpha, \alpha)}$ is the norm defined in Subsection \ref{ex(a,a)}. Clearly, 
the canonical basis $(e_i)_i$ is still quasi-greedy. 

\begin{prop}
The basis $(e_i)_i$ is $\mathcal{S}_\beta$-democratic but not $\mathcal{S}_{\beta+1}$-democratic.
\end{prop}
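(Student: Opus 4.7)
The plan is to split into the two halves, reusing the $(\alpha,\alpha)$-analysis from Proposition~\ref{pe10} and handling the new semi-norm $\|\cdot\|_\beta$ by direct computation through the recursive averaging structure.

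For the $\mathcal{S}_\beta$-democratic direction, I would show that $\|1_A\|_{(\alpha,\beta)}$ is uniformly bounded for every $A\in\mathcal{S}_\beta$. The crucial bound is $\|1_A\|_\beta\leq 6$: since each $A_j\in\Max(\mathcal{S}_{\beta+1})$ has the recursive decomposition $A_j=\bigcup_{s=1}^{\min A_j}B_{j,s}$ with $B_{j,s}\in\Max(\mathcal{S}_\beta)$, the $j$-th term in the definition of $\|1_A\|_\beta$ unfolds as
$$\min A_j\sum_{i\in A_j\cap A}x_{(\beta+1,A_j)}(i)\;=\;\sum_{s=1}^{\min A_j}\sum_{i\in B_{j,s}\cap A}x_{(\beta,B_{j,s})}(i),$$
which is at most $6$ by Lemma~\ref{boundby6} applied to the $\mathcal{S}_\beta$-set $A$ and the increasing maximal $\mathcal{S}_\beta$-sets $B_{j,s}$, uniformly in $j$. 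The companion bound $\|1_A\|_{(\alpha,\alpha)}\leq C$ follows from Proposition~\ref{pe10} as soon as $A\in\mathcal{S}_\alpha$, which by \eqref{er12} covers every $A\in\mathcal{S}_\beta$ with $\min A>N$; the remaining sets have bounded cardinality, so $\|1_A\|\leq|A|\leq N$ suffices. Together with the trivial lower bound $\|1_B\|_{(\alpha,\beta)}\geq\|1_B\|_0=1$ for nonempty $B$, this yields the democracy constant.

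For the negative direction, I would take $A=A_j$ itself. Since the $A_k$ are pairwise disjoint and $x_{(\beta+1,A_j)}$ is a convex combination supported on $A_j$, only the $j$-th term in $\|\cdot\|_\beta$ survives, giving $\|1_{A_j}\|_\beta=\min A_j=\min F_{m_j}$, which tends to infinity with $j$. The challenger is $B:=A_j+|A_j|$, the right-shift of $A_j$ by its own cardinality, chosen so that $|B|=|A_j|$ and $B$ slips into the gap between $A_j$ and $A_{j+1}$. Spreading gives $B\in\mathcal{S}_{\beta+1}$, hence $B\in\mathcal{S}_\alpha$ for large $j$ by \eqref{er12}, so Proposition~\ref{pe10} yields $\|1_B\|_{(\alpha,\alpha)}\leq 6$.

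The main obstacle is verifying that $B$ is disjoint from every $A_i$, so that $\|1_B\|_\beta=0$. For $i\leq j$ disjointness is built into the choice of $B$. For $i\geq j+1$, the hypothesis $s_{(\beta+1,\min F_{m_j})}(1)<\min F_{n_j}$ gives $|A_j|<\min F_{n_j}$ and hence $\max B<2\min F_{n_j}$; combined with the spacing $2n_j-1<m_{j+1}$ and the doubling $\min F_{k+1}\geq 2\min F_k$ (valid for $\alpha\geq 1$ since any $F_k\in\Max(\mathcal{S}_\alpha)$ satisfies $|F_k|\geq\min F_k$), this yields $\min A_{j+1}=\min F_{m_{j+1}}\geq\min F_{2n_j}\geq 2^{n_j}\min F_{n_j}>\max B$. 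Hence $\|1_B\|_{(\alpha,\beta)}\leq 6$ while $\|1_{A_j}\|_{(\alpha,\beta)}\geq\min F_{m_j}\to\infty$, which rules out $\mathcal{S}_{\beta+1}$-democracy.
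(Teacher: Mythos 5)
Most of your argument is sound and runs along the same lines as the paper: the bound $\|1_A\|_\beta\le 6$ via the recursive decomposition of each $A_j\in\Max(\mathcal{S}_{\beta+1})$ into $\min A_j$ maximal $\mathcal{S}_\beta$-sets followed by Lemma~\ref{boundby6} is exactly the intended mechanism, and for the failure of $\mathcal{S}_{\beta+1}$-democracy you compare $1_{A_j}$ (whose $\|\cdot\|_\beta$-norm is $\min F_{m_j}\to\infty$) with a set of the same cardinality that lies in $\mathcal{S}_\alpha$ and misses every $A_i$, so that only the $(\alpha,\alpha)$-part of the norm survives and the proof of Proposition~\ref{pe10} caps it by $6$. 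The paper chooses its comparison sets inside $\bigcup_j F_{n_j}$, whereas you take the shift $B=A_j+|A_j|$ and verify disjointness from all $A_i$ using $s_{(\beta+1,\min F_{m_j})}(1)<\min F_{n_j}$, the spacing $2n_j-1<m_{j+1}$, and the doubling $\min F_{k+1}\ge 2\min F_k$; that computation is correct, so this is only a cosmetic difference.

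There is, however, one step in the positive direction that fails as written: the claim that the sets $A\in\mathcal{S}_\beta$ with $\min A\le N$ ``have bounded cardinality, so $\|1_A\|\le|A|\le N$.'' Once $\beta\ge 2$ this is false: for every $k\ge 2$ the set $\{2\}\cup[k,2k-1]$ belongs to $\mathcal{S}_2$, so $\mathcal{S}_\beta$-sets with minimum at most $N$ can be arbitrarily long, and your case split by $\min A$ leaves all such sets uncontrolled in the $\|\cdot\|_{(\alpha,\alpha)}$-norm (the estimate $\|1_A\|\le|A|$ is fine; the bound $|A|\le N$ is not). The repair is the paper's: do not classify whole sets by their minimum, but split each $A\in\mathcal{S}_\beta$ as $A=A_{\le N}\cup A_{>N}$. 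Then $\|1_{A_{\le N}}\|_{(\alpha,\alpha)}\le N$ because $|A_{\le N}|\le N$, while $A_{>N}\in\mathcal{S}_\beta$ (hereditariness) with $N<A_{>N}$ lies in $\mathcal{S}_\alpha$ by \eqref{er12}, so $\|1_{A_{>N}}\|_{(\alpha,\alpha)}\le 6$, and the triangle inequality gives $\|1_A\|_{(\alpha,\alpha)}\le N+6$. With that local fix your proof is complete.
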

\begin{proof}
By \eqref{er12}, there exists $N$ such that $\{E\in \mathcal{S}_\beta: N < E\}\subset \mathcal{S}_\alpha$. Let $A\in \mathcal{S}_\beta$. Write $A = A_{\le N}\cup A_{> N}$, where $A_{\le N} = \{i\in A: i\le N\}$ and $A_{> N} = \{i\in A: i > N\}$. By Lemma \ref{boundby6},
$$\begin{cases}
\|1_A\|_\beta\ \le\ 6,\\
\|1_A\|_{(\alpha, \alpha)}\ \le\ \|1_{A_{\le N}}\|_{(\alpha, \alpha)} + \|1_{A_{>N}}\|_{(\alpha, \alpha)}\ \le\ N + 6.\end{cases}$$
Hence, $\|1_A\|_{(\alpha, \beta)}\le N+6$, which implies that $(e_i)_i$ is $\mathcal{S}_\beta$-democratic.

Choose $B_i\in \mathcal{S}_\alpha$ so that 
$B_i\subset \cup_{j=1}^{\infty} F_{n_j}$ and $|A_i|\le |B_i|$. From the proof of Proposition \ref{pe10}, we know that $\|1_{B_i}\|_{(\alpha, \beta)}\ =\ \|1_{B_i}\|_{(\alpha, \alpha)}\le 6$. However, 
$$\|1_{A_i}\|_{(\alpha, \beta)}\ \ge\ \|1_{A_i}\|_{\beta} \ =\ \min A_i\ =\ \min F_{m_i}.$$
Hence, $\|1_{A_i}\|/\|1_{B_i}\|\rightarrow \infty$ as $i\rightarrow\infty$, and thus, $(e_i)_i$ is not $\mathcal{S}_{\beta+1}$-democratic. 
\end{proof}

\begin{prop}
The basis $(e_i)_i$ is $\mathcal{S}_{\alpha}$-unconditional but not $\mathcal{S}_{\alpha+1}$-unconditional.   
\end{prop}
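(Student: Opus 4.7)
The plan is to reduce both assertions to the $(\alpha,\alpha)$-case already handled in Proposition \ref{rp11}, treating the extra seminorm $\|\cdot\|_\beta$ as a benign perturbation.

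\textbf{$\mathcal{S}_\alpha$-unconditionality.} Since each repeated average $x_{(\beta+1,A_j)}$ has nonnegative coordinates, the seminorm $\|\cdot\|_\beta$ depends only on $(|x_i|)_i$ and is therefore monotone under coordinate projections: $\|P_F(x)\|_\beta\le\|x\|_\beta$ for every $F\subset\mathbb{N}$. Combined with Proposition \ref{rp11}, which produces a constant $C$ with $\|P_F(x)\|_{(\alpha,\alpha)}\le C\|x\|_{(\alpha,\alpha)}$ for every $F\in\mathcal{S}_\alpha$, this yields
$$\|P_F(x)\|_{(\alpha,\beta)}\ =\ \max\{\|P_F(x)\|_{(\alpha,\alpha)},\|P_F(x)\|_\beta\}\ \le\ C\|x\|_{(\alpha,\beta)}.$$

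\textbf{Failure of $\mathcal{S}_{\alpha+1}$-unconditionality.} For $j\in\mathbb{N}$, I reuse the witnesses from the proof of Proposition \ref{rp11} with $N=n_j$,
$$x_{n_j}\ =\ \sum_{k=n_j}^{2n_j-1}\sum_{i\in F_k}\frac{(-1)^i}{\sqrt{k-n_j+1}}\,e_i,\qquad y_{n_j}\ =\ \sum_{k=n_j}^{2n_j-1}\sum_{i\in F_k}\frac{1}{\sqrt{k-n_j+1}}\,e_i,$$
both supported on $E_{n_j}=\bigcup_{k=n_j}^{2n_j-1}F_k\in\mathcal{S}_{\alpha+1}$. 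The key verification is that $E_{n_j}\cap A_\ell=\emptyset$ for every $\ell\in\mathbb{N}$. By construction and the inequality $s_{(\beta+1,\min F_{m_\ell})}(1)<\min F_{n_\ell}$ we have $A_\ell\subset[\min F_{m_\ell},\min F_{n_\ell}-1]$. For $\ell\le j$ this places $A_\ell$ strictly to the left of $F_{n_j}$; for $\ell\ge j+1$ the inequality $2n_j-1<m_{j+1}\le m_\ell$ places $A_\ell$ strictly to the right of $F_{2n_j-1}$. Consequently $\|x_{n_j}\|_\beta=\|y_{n_j}\|_\beta=0$, so $\|x_{n_j}\|_{(\alpha,\beta)}=\|x_{n_j}\|_{(\alpha,\alpha)}$ and likewise for $y_{n_j}$. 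The ratio $\|y_{n_j}\|_{(\alpha,\alpha)}/\|x_{n_j}\|_{(\alpha,\alpha)}\to\infty$ established in the proof of Proposition \ref{rp11} thus transfers verbatim to $\|\cdot\|_{(\alpha,\beta)}$. Since $|x_{n_j,i}|=|y_{n_j,i}|$ and both vectors are supported in $E_{n_j}\in\mathcal{S}_{\alpha+1}$, writing $y_{n_j}=P_{B_+}(x_{n_j})-P_{B_-}(x_{n_j})$, where $B_\pm:=\{i\in E_{n_j}:(-1)^i=\pm 1\}\in\mathcal{S}_{\alpha+1}$ by the hereditary property, rules out any uniform constant for the $\mathcal{S}_{\alpha+1}$-projections.

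The only non-formal ingredient is the disjointness verification $E_{n_j}\cap A_\ell=\emptyset$; this is short but is effectively the main obstacle, as it is precisely what forces the interleaving choices $m_i<n_i<2n_i-1<m_{i+1}$ and $s_{(\beta+1,\min F_{m_i})}(1)<\min F_{n_i}$ imposed on the construction of $Y$. Once these inequalities are in place, both halves of the statement follow immediately from the corresponding half of Proposition \ref{rp11}.
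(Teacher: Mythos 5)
Your proof is correct and follows essentially the same route as the paper: $\mathcal{S}_\alpha$-unconditionality is inherited from Proposition \ref{rp11} because $\|\cdot\|_\beta$ is unconditional, and the failure of $\mathcal{S}_{\alpha+1}$-unconditionality uses the same witnesses $x_{n_j}, y_{n_j}$ supported on $\bigcup_{k=n_j}^{2n_j-1}F_k$, whose disjointness from the sets $A_\ell$ makes $\|\cdot\|_\beta$ vanish so that the ratio from Proposition \ref{rp11} carries over. Your explicit verification that $A_\ell\subset[\min F_{m_\ell},\min F_{n_\ell}-1]$ lies either left of $F_{n_j}$ or right of $F_{2n_j-1}$, and the projection decomposition via $B_\pm$, merely spell out details the paper leaves implicit.
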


\begin{proof} Thanks to Proposition \ref{rp11} and the unconditional $\|\cdot\|_\beta$, it suffices to show that $(e_i)_i$ is not $\mathcal{S}_{\alpha+1}$-unconditional. For $N\in \mathbb{N}$, let
\begin{align*}
    x &\ =\ x_N\ =\ \sum_{j=n_N}^{2n_N-1}\sum_{i\in F_{j}}\frac{(-1)^i}{\sqrt{j-n_N+1}}e_{i}\mbox{ and }\\
    y &\ =\ y_N\ =\ \sum_{j=n_N}^{2n_N-1}\sum_{i\in F_{j}}\frac{1}{\sqrt{j-n_N+1}}e_{i}.
\end{align*}
Since $\supp(x) = \supp(y) \subset \mathbb{N}\backslash\left(\cup_{j=1}^\infty F_{m_j}\right)$, 
$\|x\|_{(\alpha, \beta)} = \|x\|_{(\alpha, \alpha)}$ and $\|y\|_{(\alpha, \beta)} = \|y\|_{(\alpha, \alpha)}$. By the proof of Proposition \ref{rp11}, $\|y_N\|_{(\alpha, \beta)}/\|x_N\|_{(\alpha, \beta)}\rightarrow\infty$ as $N\rightarrow\infty$. Therefore, $(e_i)_i$ is not $\mathcal{S}_{\alpha+1}$-unconditional.  
\end{proof}

\section{Further investigation}
It is natural for future work to investigate the following question: for $\alpha+2 \le \beta < \omega_1$, are there $(\alpha, \beta)$-quasi-greedy bases? If so, these bases would correspond to the empty circles in Figure \ref{qgtable}. 

\section{Acknowledgement}
We would like to thank the anonymous referee for carefully reading the paper.

\ \\
\end{document}